\newtheorem{theorem}{Theorem}[section]
\newtheorem{lemma}[theorem]{Lemma}
\newtheorem{proposition}[theorem]{Proposition}
\newtheorem{corollary}[theorem]{Corollary}
\theoremstyle{remark}
\newtheorem{remark}[theorem]{Remark}
\theoremstyle{definition}
\newtheorem{definition}[theorem]{Definition}
\newcommand{\NN}{\mathbb{N}}
\newcommand{\ZZ}{\mathbb{Z}}
\newcommand{\CC}{\mathbb{C}}
\newcommand{\po}[2]{\left\{ {#1}, {#2} \right\}}
\newcommand{\field}[1]{\mathbb{#1}}
\newcommand{\C}{\field{C}}
\newcommand{\N}{\field{N}}
\DeclareMathOperator{\tr}{Tr}
\newcommand{\matnn}{\mathrm{M}_{n}(\C)}
\newcommand{\camo}{\mathcal{C}_n}
\newcommand{\holo}{\mathop{\mathcal{O}}}
\DeclareMathOperator{\lie}{Lie}
\DeclareMathOperator{\aut}{Aut}
\newcommand{\cabove}{\widehat{\mathcal{C}}_n}
\newcommand{\rabove}{\widehat{\mathcal{S}}_n}
\newcommand{\tlcamo}{\mathcal{S}_n}
\DeclareMathOperator{\rank}{rank}
\DeclareMathOperator{\gl}{GL}
\newcommand{\RR}{\mathbb{R}}
\DeclareMathOperator{\diff}{Diff}
\DeclareMathOperator{\gln}{G}
\title{Direct Products for the Hamiltonian Density Property}
\author{Rafael B. Andrist \and Gaofeng Huang}
\keywords{Hamiltonian density property, symplectic density property, direct product, traceless Calogero-Moser space, holomorphic symplectic automorphism}
\dedicatory{Dedicated to Josip Globevnik on the occasion of his eightieth birthday}
\subjclass{32M17, 53D22}
\begin{document}

\begin{abstract}
We show that the direct product of two Stein manifolds with the Hamiltonian density property enjoys the Hamiltonian density property as well. We investigate the relation between the Hamiltonian density property and the symplectic density property. We then establish the Hamiltonian and the symplectic density property for $(\CC^\ast)^{2n}$ and for the so-called traceless Calogero--Moser spaces. As an application we obtain a Carleman-type approximation for Hamiltonian diffeomorphisms of a real form of the traceless Calogero--Moser space. 
\end{abstract}

\maketitle

\section{Introduction}

The first study of large holomorphic symplectic automorphism groups goes back to Forstneri\v{c} \cite{MR1408866} in 1996 who showed that the holomorphic symplectic shears generate a dense subgroup of all the holomorphic symplectic automorphisms of $\CC^{2n} \cong T^\ast \CC^{n}$ with the standard symplectic form. After a long hiatus, Deng and Wold \cite{MR4423269} in 2022 considered the holomorphic symplectic automorphism group of co-adjoint orbits of Lie groups. In 2025, the authors \cite{CaloSymplo} introduced the notion of the Hamiltonian density property (see below) as well as of the symplectic density property for Stein manifolds, and showed that the Calogero--Moser spaces $\camo$ of $n$ particles satisfy these two properties.

\smallskip

Let $X$ be a complex manifold. A \emph{holomorphic symplectic form} $\omega$ on $X$ is a closed, nowhere degenerate holomorphic $(2,0)$-form on $X$. We call holomorphic map $F \colon X \to X$ symplectic if $F^\ast \omega = \omega$. We call a holomorphic vector field $V$ on $(X, \omega)$ \emph{symplectic}, if $i_V \omega$ is a closed form; and we call $V$ \emph{Hamiltonian} if $i_V \omega$ is an exact form, i.e.\ $i_V \omega = df$ for some holomorphic function $f \colon X \to \CC$ which is called the \emph{Hamiltonian (function)} of $V$. On a connected manifold $X$, the Hamiltonian function of a given vector field is uniquely determined up to an additive constant. 
By $\mathrm{SVF}_\omega(X)$ we denote the Lie algebra of \emph{symplectic vector fields} on $X$, and by $\mathrm{HVF}_\omega(X)$ we denote the Lie algebra of \emph{Hamiltonian vector fields} on $X$. The Lie bracket is given by the commutator of vector fields. 

The Poisson bracket $\{f,g\}$ between Hamiltonian functions $f, g$ is related to the Lie bracket between the corresponding Hamiltonian vector fields $V_f, V_g$ as follows
\begin{align} \label{rel: Poisson-Lie}
    d \{ f, g \} = - i_{[V_f, V_g]} \omega
\end{align}

Denote by $\flat \colon T X \to T^\ast X,  V \mapsto i_V \omega$ the canonical isomorphism induced by the (nondegenerate) holomorphic symplectic form $\omega$, and by $\sharp \colon T^\ast X \to T X$ the inverse of $\flat$. For a vector field $V$ we use the notation $V^\flat = \flat (V)$ for its associated 1-form and for a 1-form $\alpha$, $\alpha^\sharp =\sharp(\alpha)$ for its associated vector field. 

Then 
\begin{align} \label{iso-Theta}
    \Theta \colon \holo(X)/\CC \to \mathrm{HVF}_\omega(X), f \mapsto \flat( df)
\end{align}
is a Lie algebra isomorphism between holomorphic functions on $X$ modulo the constants, and the Hamiltonian vector fields on $X$. 

\begin{definition}
\begin{enumerate} \hfill
    \item Let $(X,\omega)$ be a smooth affine complex variety with an algebraic symplectic form. We say that $(X, \omega)$ enjoys the \emph{algebraic Hamiltonian density property} if the Lie algebra generated by the complete algebraic Hamiltonian vector fields on $X$ equals $\mathrm{HVF}^{\mathrm{alg}}_\omega(X)$. 
    \item Let $(X,\omega)$ be a Stein manifold with a holomorphic symplectic form. We say that $(X, \omega)$ enjoys the \emph{Hamiltonian density property} if the Lie algebra generated by the complete holomorphic Hamiltonian vector fields on $X$ is dense in $\mathrm{HVF}_\omega(X)$.    \item Let $(X,\omega)$ be a Stein manifold with a holomorphic symplectic form. We say that $(X, \omega)$ enjoys the \emph{symplectic density property} if the Lie algebra generated by the complete holomorphic symplectic vector fields on $X$ is dense in $\mathrm{SVF}_\omega(X)$.
\end{enumerate}
\end{definition}

We denote the first holomorphic de Rham cohomology group on a complex manifold $X$ by
\[
H^1_{\mathrm{dR}}(X) = \{ \omega \,:\, \text{holomorphic $1$-form on } X, d \omega = 0 \} / \{ df \,:\, f \in \holo(X) \}
\]

\begin{remark} \hfill
\begin{enumerate}
    \item For establishing the Hamiltonian density property of $X$, it suffices to show that the Lie algebra generated by a set of functions inducing complete Hamiltonian vector fields, is dense in $\holo(X)/\CC$. 
    \item For a smooth affine algebraic variety $X$, the algebraic Hamiltonian density property implies the Hamiltonian density property since every holomorphic function on $X$ can be approximated by polynomials on $X$.
    \item If $X$ is a Stein manifold with vanishing first holomorphic de Rham cohomology group $H^1_{\mathrm{dR}}(X)$, then every holomorphic symplectic vector field is in fact Hamiltonian, and the notions of the Hamiltonian density property and the symplectic density property coincide.
    \item If $X$ is a Stein manifold, then $H^1_{\mathrm{dR}}(X)$ is isomorphic to the singular cohomology group $H^1(X, \CC)$ which is a topological invariant.
\end{enumerate}
\end{remark}

In this article we develop the general theory further and revisit again the Calogero--Moser spaces. 
\begin{enumerate}
    \item We prove that direct products of manifolds with the Hamiltonian density property have the Hamiltonian density property, see Section \ref{sec-product}.
    \item We give sufficient criteria to deduce the symplectic density property from the Hamiltonian density property even if the first holomorphic de Rham cohomology is non-trivial, see also Section \ref{sec-product}.
    \item As an application, we establish the Hamiltonian and the symplectic density property for $\CC \times \CC^\ast$,  $(\CC^\ast)^2$ and their products, see Section \ref{sec-applications}.
    \item We consider a special form of the Calogero--Moser space which we call \emph{traceless Calogero--Moser space} $\tlcamo$ and establish the Hamiltonian and the symplectic density property for $\tlcamo$, see Section \ref{sec-traceless}. As a consequence, we also obtain a Carleman-type approximation for Hamiltonian diffeomorphisms of a real form of $\tlcamo$.
\end{enumerate}

\section{Direct Products}
\label{sec-product}

Denote by $p_X \colon X \times Y \to X$ and $p_Y \colon X \times Y \to Y$ the canonical projections to the factors, and by $i_X \colon X \to X \times Y$ and $i_Y \colon Y \to X \times Y$ the inclusions. Given symplectic forms $\omega_X$ and $\omega_Y$ on $X$ and $Y$, respectively, it is well-known that we obtain a symplectic form $\omega_{X \times Y} = p_X^\ast \omega_X + p_Y^\ast \omega_Y$ on $X \times Y$.

\begin{theorem}
\label{thm:directprodHam}
Let $(X, \omega_X)$ and $(Y, \omega_Y)$ be smooth affine varieties equipped with algebraic symplectic forms. If $(X, \omega_X)$ and $(Y, \omega_Y)$ have the algebraic Hamiltonian density property, then $(X \times Y, \omega_{X \times Y})$ has the algebraic Hamiltonian density property.
\end{theorem}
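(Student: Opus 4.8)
The plan is to argue at the level of Hamiltonian functions. Via the algebraic version of the Lie algebra isomorphism $\Theta$ of \eqref{iso-Theta}, the algebraic Hamiltonian density property of $(X\times Y,\omega_{X\times Y})$ is equivalent to the assertion that the Lie subalgebra $\mathcal{L}$ of $\CC[X\times Y]/\CC$, with the Poisson bracket, generated by the Hamiltonians of all complete algebraic Hamiltonian vector fields on $X\times Y$, equals $\CC[X\times Y]/\CC$. Using the identification $\CC[X\times Y]=\CC[X]\otimes_{\CC}\CC[Y]$, the coordinate ring is spanned over $\CC$ by the products $(h\circ p_X)(k\circ p_Y)$ with $h\in\CC[X]$, $k\in\CC[Y]$, so it is enough to show that each such product lies in $\mathcal{L}$ modulo constants. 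We will repeatedly use the elementary properties of the product Poisson structure: functions pulled back along $p_X$ Poisson-commute with functions pulled back along $p_Y$; one has $\{a\circ p_X,b\circ p_X\}_{X\times Y}=\{a,b\}_X\circ p_X$, and symmetrically for $Y$; and combined with the Leibniz rule these give, for instance, $\{(h\circ p_X)(k\circ p_Y),\,h'\circ p_X\}=(k\circ p_Y)\,(\{h,h'\}_X\circ p_X)$.

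Let $\mathcal{F}_X\subset\CC[X]$ denote the set of Hamiltonians of complete algebraic Hamiltonian vector fields on $X$; by hypothesis the Lie algebra generated by $\mathcal{F}_X$ is all of $\CC[X]$ modulo constants, and likewise for $\mathcal{F}_Y\subset\CC[Y]$. The crucial step — the one place where the symplectic structure is really used — is a completeness claim: \emph{for $f\in\mathcal{F}_X$ and $g\in\mathcal{F}_Y$, the function $(f\circ p_X)(g\circ p_Y)$ is the Hamiltonian of a complete algebraic Hamiltonian vector field $V$ on $X\times Y$.} To see this, decompose $V$ into its components tangent to the two factors; one finds that its flow equations read $\dot{x}=g(y)\,V_f(x)$ and $\dot{y}=f(x)\,V_g(y)$, where $V_f$ and $V_g$ are the Hamiltonian vector fields of $f$ on $X$ and of $g$ on $Y$. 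Since $f$ is a first integral of $V_f$ and $g$ of $V_g$, the functions $t\mapsto f(x(t))$ and $t\mapsto g(y(t))$ are constant along the flow; the system therefore decouples into two rescaled complete flows, so $V$ is complete. The same argument with $g\equiv 1$ (respectively $f\equiv 1$) shows that $f\circ p_X$ and $g\circ p_Y$ are themselves Hamiltonians of complete vector fields on $X\times Y$. Hence $\mathcal{L}$ contains $p_X^\ast\mathcal{F}_X$, $p_Y^\ast\mathcal{F}_Y$, and all products $(f\circ p_X)(g\circ p_Y)$ with $f\in\mathcal{F}_X$, $g\in\mathcal{F}_Y$.

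The remainder is formal. Since $p_X^\ast\colon\CC[X]\to\CC[X\times Y]$ is a homomorphism of Lie algebras for the Poisson brackets mapping $\mathcal{F}_X$ into $\mathcal{L}$, it maps the Lie algebra generated by $\mathcal{F}_X$ into $\mathcal{L}$; by the density property of $X$ this image is all of $\CC[X]$ modulo constants, so $h\circ p_X\in\mathcal{L}$ for every $h\in\CC[X]$, and symmetrically $k\circ p_Y\in\mathcal{L}$ for every $k\in\CC[Y]$. Now fix $g\in\mathcal{F}_Y$ and set $W_g:=\{\,h\in\CC[X]:\ (h\circ p_X)(g\circ p_Y)\in\mathcal{L}\,\}$. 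This is a linear subspace containing the constants (since $g\circ p_Y\in\mathcal{L}$) and containing $\mathcal{F}_X$ (by the completeness claim); and if $h\in W_g$ and $h'\in\CC[X]$, then the bracket identity above gives $\{(h\circ p_X)(g\circ p_Y),\,h'\circ p_X\}=(g\circ p_Y)(\{h,h'\}_X\circ p_X)\in\mathcal{L}$, so $\{h,h'\}_X\in W_g$. Thus $W_g$ is an ideal, in particular a Lie subalgebra, of $\CC[X]$ that contains $\mathcal{F}_X$ and the constants, so $W_g=\CC[X]$ by the density property of $X$. Therefore $(h\circ p_X)(g\circ p_Y)\in\mathcal{L}$ for all $h\in\CC[X]$ and all $g\in\mathcal{F}_Y$. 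The symmetric argument — fixing $h\in\CC[X]$ and considering $W'_h:=\{\,k\in\CC[Y]:(h\circ p_X)(k\circ p_Y)\in\mathcal{L}\,\}$, which contains $\mathcal{F}_Y$ by what was just shown and is an ideal of $\CC[Y]$ — gives $W'_h=\CC[Y]$. Hence every product $(h\circ p_X)(k\circ p_Y)$ lies in $\mathcal{L}$, so $\mathcal{L}=\CC[X\times Y]/\CC$, which is the claim.

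I expect the completeness of the Hamiltonian vector field of the product $(f\circ p_X)(g\circ p_Y)$ to be the main obstacle: unlike in the familiar product theorems for the (non-symplectic) density property, this vector field genuinely couples the two factors, and its completeness is rescued only by the conservation of $f$ and of $g$ along the Hamiltonian flow. Everything after that is a routine manipulation of the product Poisson bracket together with the assumed density properties of the two factors.
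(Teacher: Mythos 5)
Your proof is correct, but it follows a genuinely different route from the paper. The paper deduces the theorem from Proposition \ref{prop:tensor}: there the only completeness input is Lemma \ref{lemma:sums}, which shows that $(f+g)^n$ is the Hamiltonian of a complete field (via Remark \ref{lemma:shear}), so that the product $f_k\, g_\ell$ of generators is reached through the polarization identity $2f_kg_\ell=(f_k+g_\ell)^2-f_k^2-g_\ell^2$, and general products $f\cdot g$ are then produced by rewriting $g$ (and $f$) as nested Poisson brackets and absorbing the other factor into the outermost bracket. You instead prove directly that for Hamiltonians $f,g$ of complete fields on the factors, the Hamiltonian field $ (g\circ p_Y)\,V_f+(f\circ p_X)\,V_g$ of the product is complete, because $f$ and $g$ are first integrals of this field and the flow therefore decouples into two time-rescaled complete flows; this is exactly the mechanism the paper records only in the special case of LNDs in the remark following Proposition \ref{prop:tensor}, and your observation upgrades it to arbitrary complete Hamiltonian fields. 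After that, your Lie-ideal argument ($W_g$ and $W'_h$) is a clean repackaging of the same Leibniz-plus-vanishing-cross-bracket manipulation the paper performs with nested brackets. What each approach buys: yours is more economical (no squares $(f_k^2,g_\ell^2,(f_k+g_\ell)^2)$ among the generators, no polarization, no Lemma \ref{lemma:sums}) and isolates a completeness statement of independent interest; the paper's formulation of Proposition \ref{prop:tensor} for arbitrary Lie subalgebras $\mathfrak{h}_X\otimes\mathfrak{h}_Y$ with an explicit list of generating complete fields is tailored to transfer verbatim to the Stein case (Theorem \ref{thm:directprodHamStein}) via the topological tensor product, whereas your ideal argument would need a small amount of extra closure/density bookkeeping there. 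Two minor points to make explicit if you write this up: completeness here means the holomorphic flow is entire in complex time, which your explicit formula $\bigl(\varphi_{g(y_0)t}(x_0),\psi_{f(x_0)t}(y_0)\bigr)$ does provide; and the product Poisson relations you use (vanishing of $\{f\circ p_X, g\circ p_Y\}$ and compatibility of $p_X^\ast$ with the brackets) are exactly the content of Lemma \ref{lemma:poissonprod}, so you can simply cite it rather than call them elementary.
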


\begin{theorem}
\label{thm:directprodHamStein}
    Let $(X, \omega_X)$ and $(Y, \omega_Y)$ be Stein manifolds equipped with holomorphic symplectic forms. If $(X, \omega_X)$ and $(Y, \omega_Y)$ have the Hamiltonian density property, then $(X \times Y, \omega_{X \times Y})$ has the Hamiltonian density property.
\end{theorem}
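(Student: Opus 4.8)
The plan is to transport everything, via the Lie-algebra isomorphism $\Theta$, to the level of Hamiltonian functions with the Poisson bracket. For a Stein manifold $Z$ write $\Lambda(Z)\subseteq\holo(Z)$ for the set of holomorphic functions whose Hamiltonian vector field is complete (note $\CC\subseteq\Lambda(Z)$, since constants have Hamiltonian field $0$). As $\Theta$ is a topological Lie-algebra isomorphism, Remark~(1) says that the Hamiltonian density property of $Z$ is equivalent to the density in $\holo(Z)$ of the Poisson--Lie algebra $\mathrm{Lie}(\Lambda(Z))$ generated by $\Lambda(Z)$. Put $\mathcal L:=\mathrm{Lie}\bigl(\Lambda(X\times Y)\bigr)$ and let $\overline{\mathcal L}$ be its closure in $\holo(X\times Y)$. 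The Poisson bracket is continuous in each argument (bracketing with a fixed holomorphic function is locally a first-order linear differential operator with holomorphic coefficients, continuous on $\holo$ by the Cauchy estimates), hence jointly continuous on the Fréchet space $\holo(X\times Y)$, so $\overline{\mathcal L}$ is again a Lie subalgebra; the goal is $\overline{\mathcal L}=\holo(X\times Y)$. For the split form $\omega_{X\times Y}=p_X^\ast\omega_X+p_Y^\ast\omega_Y$ I record: under the splitting $T(X\times Y)\cong p_X^\ast TX\oplus p_Y^\ast TY$ the Hamiltonian field of $p_X^\ast f$ is $V_f\oplus 0$; the maps $p_X^\ast,p_Y^\ast$ are homomorphisms for the Poisson bracket; $\{p_X^\ast f,p_Y^\ast g\}=0$; and, by the Leibniz rule, $\{p_X^\ast h,\ p_X^\ast H\cdot p_Y^\ast G\}=p_X^\ast\{h,H\}_X\cdot p_Y^\ast G$ (and symmetrically).

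First I would observe that $p_X^\ast$ maps $\Lambda(X)$ into $\Lambda(X\times Y)$, since the flow of $V_f\oplus 0$ is $\varphi^{V_f}_t\times\id_Y$; being a Lie homomorphism, $p_X^\ast$ then maps $\mathrm{Lie}(\Lambda(X))$ into $\mathcal L$, and as $p_X^\ast$ is continuous and $\mathrm{Lie}(\Lambda(X))$ is dense in $\holo(X)$ we conclude $p_X^\ast\holo(X)\subseteq\overline{p_X^\ast\mathrm{Lie}(\Lambda(X))}\subseteq\overline{\mathcal L}$; symmetrically $p_Y^\ast\holo(Y)\subseteq\overline{\mathcal L}$.

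Because $\holo(X\times Y)$ is densely spanned by products $p_X^\ast h\cdot p_Y^\ast g$, whereas pullbacks from the two factors Poisson-commute and hence on their own generate only the small space $p_X^\ast\holo(X)+p_Y^\ast\holo(Y)$, the heart of the matter — and what I expect to be the main obstacle — is to place genuine product functions into $\overline{\mathcal L}$. The key lemma is: if $a\in\Lambda(X)$ and $b\in\Lambda(Y)$, then $p_X^\ast a\cdot p_Y^\ast b\in\Lambda(X\times Y)$. Its Hamiltonian field is $R=(p_Y^\ast b)(V_a\oplus 0)+(p_X^\ast a)(0\oplus V_b)$; the decisive point is that $a$ is a first integral of its own Hamiltonian field $V_a$ (and likewise $b$ of $V_b$), so that $p_X^\ast a$ and $p_Y^\ast b$ are constant along the flow of $R$. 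Consequently, along each integral curve of $R$ the field $R$ coincides with a fixed constant-coefficient combination of the commuting complete fields $V_a\oplus 0$ and $0\oplus V_b$, and one checks directly that $(x,y)\mapsto\bigl(\varphi^{V_a}_{b(y)t}(x),\,\varphi^{V_b}_{a(x)t}(y)\bigr)$ is the flow of $R$, defined for all $t\in\CC$.

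It remains to pass from "some products" to "all products". Fix $b\in\Lambda(Y)$ and set $\mathcal I_b:=\{\,c\in\holo(X):p_X^\ast c\cdot p_Y^\ast b\in\overline{\mathcal L}\,\}$, a closed linear subspace of $\holo(X)$. By the key lemma $\Lambda(X)\subseteq\mathcal I_b$; moreover, since $\overline{\mathcal L}$ is a Lie subalgebra and $p_X^\ast h\in\overline{\mathcal L}$ for all $h$, the Leibniz identity gives $p_X^\ast\{h,c\}_X\cdot p_Y^\ast b=\{p_X^\ast h,\ p_X^\ast c\cdot p_Y^\ast b\}\in\overline{\mathcal L}$, i.e.\ $\{h,c\}_X\in\mathcal I_b$ for every $h\in\holo(X)$, $c\in\mathcal I_b$. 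Thus $\mathcal I_b$ is a closed Poisson ideal of $\holo(X)$ containing $\Lambda(X)$; being a subalgebra it contains $\mathrm{Lie}(\Lambda(X))$, and being closed it contains $\overline{\mathrm{Lie}(\Lambda(X))}=\holo(X)$. Hence $p_X^\ast h\cdot p_Y^\ast b\in\overline{\mathcal L}$ for all $h\in\holo(X)$, $b\in\Lambda(Y)$. Running the same argument on the $Y$ factor — now with $p_X^\ast h$ in the passive role and bracketing against $p_Y^\ast g\in\overline{\mathcal L}$ — upgrades $b\in\Lambda(Y)$ to an arbitrary $g\in\holo(Y)$, so $p_X^\ast h\cdot p_Y^\ast g\in\overline{\mathcal L}$ for all $h\in\holo(X)$, $g\in\holo(Y)$. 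Finally, since $X$ and $Y$ are Stein, finite sums $\sum_i p_X^\ast h_i\cdot p_Y^\ast g_i$ are dense in $\holo(X\times Y)$ (embed $X,Y$ as closed submanifolds of affine spaces, extend holomorphic functions by Cartan's theorem, and approximate by polynomials; equivalently $\holo(X\times Y)=\holo(X)\,\widehat{\otimes}\,\holo(Y)$), whence $\overline{\mathcal L}=\holo(X\times Y)$. By Remark~(1) this is the Hamiltonian density property of $X\times Y$. The scheme is the analytic counterpart of the argument for Theorem~\ref{thm:directprodHam}, with density taken in the compact-open topology and Stein approximation in place of the algebraic density statement.
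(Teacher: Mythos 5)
Your proof is correct, but it reaches the conclusion by a somewhat different route than the paper. The paper proves a purely algebraic statement first (Proposition \ref{prop:tensor}): mixed products of generators are obtained by polarization, $2 f_k g_\ell = (f_k+g_\ell)^2 - f_k^2 - g_\ell^2$, where the squares are shown to be Hamiltonians of complete fields on the product (Lemma \ref{lemma:sums}, via the kernel criterion of Remark \ref{lemma:shear}), and then general products $f\cdot g$ are reached by an algebraic induction over nested Poisson brackets of the generators; only at the very end does the Stein case invoke $\holo(X\times Y)=\holo(X)\,\widehat{\otimes}\,\holo(Y)$. You instead prove the stronger key lemma that $p_X^\ast a\cdot p_Y^\ast b$ is itself the Hamiltonian of a complete field for $a\in\Lambda(X)$, $b\in\Lambda(Y)$, by exhibiting its flow $(x,y)\mapsto(\varphi^{V_a}_{b(y)t}(x),\varphi^{V_b}_{a(x)t}(y))$ — this is the holomorphic analogue of the paper's remark about LNDs, and it makes the auxiliary generators $f_k^2, g_\ell^2, (f_k+g_\ell)^2$ unnecessary (note that, exactly as in Lemma \ref{lemma:sums}, it uses the flow of $V_a$ at complex times $b(y)t$, i.e.\ the same shear-completeness convention the paper relies on). You then replace the paper's nested-bracket induction by a topological argument: $\overline{\mathcal L}$ is a closed Lie subalgebra, and the closed subspaces $\mathcal I_b$ (resp.\ $\mathcal J_h$) are stable under bracketing with all pullbacks, hence contain $\overline{\mathrm{Lie}(\Lambda(X))}=\holo(X)$ (resp.\ $\holo(Y)$). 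Both proofs end with the same tensor-product density step. What each buys: your argument is shorter in the analytic setting and needs a smaller set of complete generators, while the paper's version isolates an algebraic proposition that simultaneously yields Theorem \ref{thm:directprodHam} and, importantly for the applications in Sections \ref{sec-applications}--\ref{sec-traceless}, an explicit finite list of complete Hamiltonian generators for the product; your closure-based argument does not directly give such explicit generating sets. One small point to make explicit if you write this up: you use that the Hamiltonian density property of $X$ yields density of $\mathrm{Lie}(\Lambda(X))$ in $\holo(X)$ (the converse direction of Remark 1.3(1)), which holds because passing from a Hamiltonian field to its normalized Hamiltonian function is continuous for the compact-open topologies; the paper uses this implicitly as well when applying Proposition \ref{prop:tensor} with $\mathfrak{h}_X=\holo(X)$.
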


These two theorems will follow from the following key proposition.

\begin{proposition}
\label{prop:tensor}
Let $(X, \omega_X)$ and $(Y, \omega_Y)$ be smooth affine varieties equipped with algebraic symplectic forms or Stein manifolds equipped with holomorphic symplectic forms. 

Let $\mathfrak{h}_X$ and $\mathfrak{h}_Y$ be some Lie subalgebras of Hamiltonian vector fields on $X$ and $Y$, respectively. Assume further that $\mathfrak{h}_X$ and $\mathfrak{h}_X$ are generated by complete Hamiltonian vector fields with corresponding Hamiltonian functions $(f_k)_k$ and $(g_\ell)_\ell$, respectively. 
Then $\mathfrak{h}_X \otimes \mathfrak{h}_Y$ on $X \times Y$ is generated by the complete Hamiltonian vector fields $(f_k)_k \cup (g_\ell)_\ell \cup (f_k^2)_k \cup (g_\ell^2)_\ell \cup ((f_k + g_\ell)^2)_{k,\ell}$.
\end{proposition}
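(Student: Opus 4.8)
The plan is to translate everything, via the Lie algebra isomorphism $\Theta$ of \eqref{iso-Theta}, into a statement about Hamiltonian functions modulo constants and their Poisson bracket, and then to argue on the function side (the sign relating the Lie bracket of vector fields to the Poisson bracket will play no role). Write $f_k$ for $p_X^\ast f_k$ and $g_\ell$ for $p_Y^\ast g_\ell$ as functions on $X\times Y$, and let $\mathfrak{g}$ be the Lie algebra generated by the Hamiltonian vector fields with Hamiltonians $(f_k)_k\cup(g_\ell)_\ell\cup(f_k^2)_k\cup(g_\ell^2)_\ell\cup((f_k+g_\ell)^2)_{k,\ell}$; via $\Theta$ I regard $\mathfrak{g}$ as a linear subspace of $\holo(X\times Y)/\CC$ closed under $\{\,\cdot\,,\,\cdot\,\}$, writing $F\in\mathfrak{g}$ when the class of $F$ modulo constants belongs to $\mathfrak{g}$. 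The only structural facts about $\omega_{X\times Y}=p_X^\ast\omega_X+p_Y^\ast\omega_Y$ that I will use are $\{p_X^\ast a,p_Y^\ast b\}=0$ and $\{p_X^\ast a,p_X^\ast a'\}=p_X^\ast\{a,a'\}_X$ (and symmetrically in $Y$), which by the Leibniz rule give
\[
\{(p_X^\ast a)(p_Y^\ast b),\,(p_X^\ast a')(p_Y^\ast b')\}=(p_X^\ast\{a,a'\}_X)(p_Y^\ast(bb'))+(p_X^\ast(aa'))(p_Y^\ast\{b,b'\}_Y).
\]

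First I would check that the chosen generators are complete. The fields $V_{p_X^\ast f_k}$ and $V_{p_Y^\ast g_\ell}$ are complete, their flows acting as those of $V_{f_k}$, resp.\ $V_{g_\ell}$, on one factor and as the identity on the other. For the squares I would use the elementary fact that $V_{h^2}=2hV_h$ is complete whenever $V_h$ is: $h$ is a first integral of $V_h$, hence of $V_{h^2}$, so on each level set $\{h=c\}$ one has $V_{h^2}=2cV_h$, and the flow of $V_{h^2}$ is the flow of $V_h$ rescaled by the constant $2h(\cdot)$. Applying this with $h=p_X^\ast f_k$, with $h=p_Y^\ast g_\ell$, and with $h=p_X^\ast f_k+p_Y^\ast g_\ell$ — whose Hamiltonian field $V_{p_X^\ast f_k}+V_{p_Y^\ast g_\ell}$ is complete, being a sum of two commuting complete fields supported on complementary factors — handles the remaining generators.

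The core of the argument is the inclusion $\mathfrak{h}_X\otimes\mathfrak{h}_Y\subseteq\mathfrak{g}$. Let $\mathfrak{f}_X\subseteq\holo(X)/\CC$ be the Poisson--Lie subalgebra generated by the $f_k$, put $\widetilde{\mathfrak{f}}_X:=\mathfrak{f}_X+\CC$, and likewise for $Y$. Since $p_X^\ast$ is a Poisson homomorphism, iterated brackets of the $f_k$ already give $p_X^\ast a\in\mathfrak{g}$ for every $a\in\widetilde{\mathfrak{f}}_X$, and symmetrically for $Y$. The decisive step is the linear identity
\[
(p_X^\ast f_k+p_Y^\ast g_\ell)^2=p_X^\ast(f_k^2)+2(p_X^\ast f_k)(p_Y^\ast g_\ell)+p_Y^\ast(g_\ell^2),
\]
which — $\mathfrak{g}$ being a linear space — gives $(p_X^\ast f_k)(p_Y^\ast g_\ell)\in\mathfrak{g}$ for all $k,\ell$. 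Then I would run a two-stage propagation. Fixing $\ell$, the set $\{\,a\in\holo(X):(p_X^\ast a)(p_Y^\ast g_\ell)\in\mathfrak{g}\,\}$ is a linear subspace containing $1$ and all $f_k$ that, by the bracket formula above (the cross term vanishing), is stable under $a\mapsto\{f_k,a\}_X$ for every $k$; since a Lie algebra generated by a set $S$ is the increasing union of the subspaces obtained from $\mathrm{span}(S)$ by repeated left-bracketing against $S$, this subspace contains all of $\widetilde{\mathfrak{f}}_X$. A second, symmetric propagation in the $Y$-variable — now with $a\in\widetilde{\mathfrak{f}}_X$ as the inert spectator — yields $(p_X^\ast a)(p_Y^\ast b)\in\mathfrak{g}$ for all $a\in\widetilde{\mathfrak{f}}_X$, $b\in\widetilde{\mathfrak{f}}_Y$. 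Hence $\mathfrak{g}$ contains a spanning set of $(\widetilde{\mathfrak{f}}_X\otimes\widetilde{\mathfrak{f}}_Y)/\CC$, whence $\mathfrak{h}_X\otimes\mathfrak{h}_Y\subseteq\mathfrak{g}$; this already yields Theorems \ref{thm:directprodHam} and \ref{thm:directprodHamStein}, once one recalls that $\holo(X\times Y)$ is the (completed, in the Stein case) tensor product of $\holo(X)$ and $\holo(Y)$, and that a dense $\mathfrak{f}_X$ produces a dense $\widetilde{\mathfrak{f}}_X\hat\otimes\widetilde{\mathfrak{f}}_Y$. The reverse inclusion $\mathfrak{g}\subseteq\mathfrak{h}_X\otimes\mathfrak{h}_Y$ should be immediate from the description of $\mathfrak{h}_X\otimes\mathfrak{h}_Y$, each listed Hamiltonian visibly lying in the corresponding tensor space — for $(p_X^\ast f_k+p_Y^\ast g_\ell)^2$ via the expansion above.

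The step I expect to be the main obstacle is precisely this two-stage propagation: checking that the single identity producing $(p_X^\ast f_k)(p_Y^\ast g_\ell)$, together with the fact that the product Poisson bracket lets one multiply into one factor while leaving the other untouched, really suffices to reach every product $(p_X^\ast a)(p_Y^\ast b)$ starting from only the linear and quadratic generators. The remaining points — the splitting identities for $\omega_{X\times Y}$, the completeness of the rescaled flows (which in the Stein case one must check persists over all complex times), and the density of the algebraic tensor product inside the completed one — are either formal or standard Stein theory.
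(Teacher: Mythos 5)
Your proposal is correct and follows essentially the same route as the paper: the polarization identity $(f_k+g_\ell)^2-f_k^2-g_\ell^2$ to obtain the mixed products, completeness of squares via the rescaling-in-the-kernel argument (the paper's Lemma~\ref{lemma:sums} and Remark~\ref{lemma:shear}), the product Poisson relations (Lemma~\ref{lemma:poissonprod}), and a two-stage propagation over nested (left-normed) Poisson brackets using Leibniz and the vanishing of cross brackets. The only difference is cosmetic: you phrase the propagation as stability of a subspace under left-bracketing, while the paper absorbs the spectator factor into the first entry of the nested bracket, e.g.\ $f_k\po{g_1}{\tilde g}=\po{f_kg_1}{\tilde g}$.
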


\begin{proof}[Proof of Theorem \ref{thm:directprodHam}]
Apply Proposition \ref{prop:tensor} with $\mathfrak{h}_X = \CC[X]$ and $\mathfrak{h}_Y = \CC[Y]$ and observe that the natural embedding $\CC[X] \otimes \CC[Y] \hookrightarrow \CC[X \times Y]$ is a bijection.
\end{proof}

\begin{proof}[Proof of Theorem \ref{thm:directprodHamStein}]
Apply Proposition \ref{prop:tensor} with $\mathfrak{h}_X = \holo(X)$ and $\mathfrak{h}_Y = \holo(Y)$. We need to take the closure of the tensor product $\holo(X) \otimes \holo(Y) \hookrightarrow \holo(X \times Y)$: The space of holomorphic functions on a Stein manifold is nuclear, see e.g.\ the textbook of Trèves \cite{Treves}, Chapters 50 and 51. The space of holomorphic functions on the direct product is the topological tensor product of the corresponding spaces of holomorphic functions:
\[
\holo(X \times Y) = \holo(X) \,\widehat{\otimes}\, \holo(Y)
\]
The proof of this fact is analogous to the one of Theorem 51.6 in \cite{Treves}.
\end{proof}

\begin{lemma}
\label{lemma:poissonprod}
Let $(X, \omega_X)$ and $(Y, \omega_Y)$ be smooth affine varieties equipped with algebraic symplectic forms. Let $\po{.}{.}_X$ and $\po{.}{.}_Y$ denote the corresponding Poisson brackets. Then the Poisson bracket $\po{.}{.}_{X \times Y}$ on $(X \times Y, \omega_{X \times Y})$ is given and completely determined by 
\begin{align*}
\forall f_1, f_2 \in \CC[X] \,:\, \po{f_1}{f_2}_{X \times Y} &= \po{f_1}{f_2}_{X} \\
\forall g_1, g_2 \in \CC[Y] \,:\, \po{g_1}{g_2}_{X \times Y} &= \po{g_1}{g_2}_{Y} \\
\forall f_1 \in \CC[X], g_1 \in \CC[Y] \,:\, \po{f_1}{g_1}_{X \times Y} &= 0
\end{align*}
where we understand a function defined on one factor as a function on the product that is independent on the other factor.
\end{lemma}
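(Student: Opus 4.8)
The plan is to exploit the block-diagonal structure of $\omega_{X\times Y}$. Under the canonical identification $T_{(x,y)}(X\times Y)=T_xX\oplus T_yY$, the form $\omega_{X\times Y}=p_X^\ast\omega_X+p_Y^\ast\omega_Y$ becomes the direct sum $\omega_X|_x\oplus\omega_Y|_y$, because $p_X^\ast\omega_X$ pairs only the $T_xX$-components of two tangent vectors and $p_Y^\ast\omega_Y$ only the $T_yY$-components. Throughout I identify $f\in\CC[X]$ with $f\circ p_X\in\CC[X\times Y]$ and $g\in\CC[Y]$ with $g\circ p_Y$, as in the statement.

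First I would identify the Hamiltonian vector fields of such pulled-back functions. For $f\in\CC[X]$ with Hamiltonian vector field $V_f$ on $(X,\omega_X)$, let $\widetilde V_f$ be the vector field on $X\times Y$ whose value at $(x,y)$ is $(V_f(x),0)\in T_xX\oplus T_yY$. Since $d(f\circ p_X)=p_X^\ast(df)$ annihilates every vector tangent to the $Y$-factor, the block-diagonal description of $\omega_{X\times Y}$ gives $i_{\widetilde V_f}\omega_{X\times Y}=p_X^\ast(i_{V_f}\omega_X)=p_X^\ast(df)=d(f\circ p_X)$, so $\widetilde V_f$ is the Hamiltonian vector field of $f\circ p_X$ on $X\times Y$. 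Symmetrically, for $g\in\CC[Y]$ the Hamiltonian vector field of $g\circ p_Y$ is the field $\widetilde V_g$ with value $(0,V_g(y))$ at $(x,y)$.

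The three formulas now follow by a pointwise evaluation, using that the Poisson bracket is given by $\po{f}{g}=\omega(V_f,V_g)$. For $f_1,f_2\in\CC[X]$, the fields $\widetilde V_{f_1},\widetilde V_{f_2}$ lie in the $X$-direction, so at $(x,y)$ we get $\omega_{X\times Y}(\widetilde V_{f_1},\widetilde V_{f_2})=\omega_X|_x(V_{f_1}(x),V_{f_2}(x))=\po{f_1}{f_2}_X(x)$; as a function on $X\times Y$ this is $\po{f_1}{f_2}_X\circ p_X$, which is the claimed identity. The case of two functions from $Y$ is symmetric. Finally, for $f_1\in\CC[X]$ and $g_1\in\CC[Y]$ the fields $\widetilde V_{f_1}$ and $\widetilde V_{g_1}$ point in complementary directions, and the block-diagonal form $\omega_{X\times Y}$ pairs them to zero, so $\po{f_1}{g_1}_{X\times Y}=0$.

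It remains to see that these relations determine the bracket. Recall (as already used in the proof of Theorem \ref{thm:directprodHam}) that $\CC[X\times Y]=\CC[X]\otimes_\CC\CC[Y]$, so $\CC[X\times Y]$ is generated as an algebra by $p_X^\ast\CC[X]\cup p_Y^\ast\CC[Y]$. Since the Poisson bracket satisfies the Leibniz rule in each argument, its value on any pair of regular functions is obtained from its values on these algebra generators; hence the three displayed identities determine $\po{.}{.}_{X\times Y}$ uniquely. The only step requiring genuine care is the first one: $p_X^\ast\omega_X$ is degenerate on $X\times Y$ and cannot be inverted on its own, so one must work with $\omega_{X\times Y}$ as a whole and exploit the block-diagonal splitting; once that is in place, everything else is a direct computation.
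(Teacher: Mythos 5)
Your proof is correct, and the first half coincides with the paper's: both identify the Hamiltonian vector field of $f \circ p_X$ with the trivial extension $\widetilde V_f$ via $i_{\widetilde V_f}\omega_{X\times Y} = p_X^\ast(i_{V_f}\omega_X) = d(f\circ p_X)$, which immediately gives the first two identities. Where you genuinely diverge is the mixed bracket $\po{f_1}{g_1}_{X\times Y}$: you evaluate the block-diagonal form directly on $\widetilde V_{f_1}$ and $\widetilde V_{g_1}$, which lie in complementary blocks, and get $0$ pointwise from $\po{f}{g}=\omega(V_f,V_g)$. The paper instead works through the Lie-bracket relation \eqref{rel: Poisson-Lie}: commutativity of $\widetilde V_{f_1}$ and $\widetilde W_{g_1}$ (a local-coordinate computation) only shows the bracket is a \emph{constant} $c$, and a second step with $f_1^2$ is needed to force $c=0$. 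Your route is shorter and avoids the constant-killing trick entirely, at the cost of invoking the pointwise formula for the Poisson bracket rather than only the relation between Poisson and Lie brackets that the paper states. You also make explicit the ``completely determined'' clause, via $\CC[X\times Y]=\CC[X]\otimes\CC[Y]$ and the Leibniz rule, which the paper leaves implicit; that addition is welcome and correct.
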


\begin{proof}
Let $V_1$ and $V_2$ denote the vector fields on $X$ corresponding to the Hamiltonian functions $f_1$ and $f_2$, respectively. Denote by $\widetilde{V_1}$ and $\widetilde{V_2}$ the trivial extensions of $V_1$ and $V_2$, respectively, to $X \times Y$, i.e.\ 
\[
\widetilde{V_j}_{(x,y)} = (i_X)_\ast (V_j)_x
\]
By $\widetilde{f_j}$ we denote the extension of the Hamiltonian functions, $\widetilde{f_j} = f_j \circ p_X$. 
We have that
\[
d \widetilde{f_j} = d f_j \circ d p_X = d f_j \circ (p_X)_\ast = i_{\omega_X}V_j \circ (p_X)_\ast = i_{\omega_{X \times Y}} \widetilde{V_j}
\]
Similarly, we proceed for Hamiltonian functions $g_1$ and $g_2$ on $Y$.

Finally, we obtain $\po{f_1}{g_1}_{X \times Y} = c$ for a constant $c \in \CC$ since the two corresponding vector fields $\widetilde{V_1}$ and $\widetilde{W_1}$, respectively, commute. This can be seen immediately by a computation in local coordinates: Let $\widetilde{V_1} = \sum_k a_k(x) \frac{\partial}{\partial x_k}$ and $\widetilde{W_1} = \sum_\ell b_\ell(y) \frac{\partial}{\partial y_\ell}$,
\[
[\widetilde{V_1}, \widetilde{W_1}] = \sum_{k, \ell} \left[a_k(x) \frac{\partial}{\partial x_k}, b_\ell(y) \frac{\partial}{\partial y_\ell}\right] = 0
\]
The constant $c$ must be zero, since
$\po{f_1^2}{g_1}_{X \times Y} = 2 f_1 c$ 
corresponds to the vector field $[2 f_1 \widetilde{V_1}, \widetilde{W_1}] = 2 f_1 [\widetilde{V_1}, \widetilde{W_1}] - 2 \widetilde{W_1}(f_1) \widetilde{V_1} = 0$.
\end{proof}

\begin{remark}
\label{lemma:shear}
Let $V$ be a symplectic vector field and $f$ be a holomorphic function such that $f V$ is again symplectic. Then $f \in \ker V$ which follows from a computation in local Darboux coordinates. 
\end{remark}

\begin{lemma}
\label{lemma:sums}
Let $f \in \CC[X]$ and $g \in \CC[Y]$ be Hamiltonian functions of complete algebraic vector fields on $X$ and $Y$, respectively. Then $(f + g)^n$ is the Hamiltonian function of a complete algebraic vector field on $X \times Y$ for any $n \in \NN$.
\end{lemma}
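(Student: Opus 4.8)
The plan is to write down the complete algebraic vector field explicitly and recognize it as a time-reparametrization of a complete field along its own level sets. Let $V_f$ denote the complete algebraic Hamiltonian vector field on $X$ with Hamiltonian $f$, with flow $\phi_t$, and let $W_g$ be the one on $Y$ with Hamiltonian $g$ and flow $\psi_t$; both $\phi_t$ and $\psi_t$ are (holomorphic) automorphisms for every $t \in \CC$. Following the convention of Lemma~\ref{lemma:poissonprod}, view $f$ and $g$ as the functions $\widetilde f = f \circ p_X$ and $\widetilde g = g \circ p_Y$ on $X \times Y$. As already computed in the proof of Lemma~\ref{lemma:poissonprod}, the Hamiltonian vector field of $\widetilde f + \widetilde g$ with respect to $\omega_{X \times Y}$ is the sum of the trivial extensions,
\[
U := \widetilde{V_f} + \widetilde{W_g},
\]
and that same proof shows $\widetilde{V_f}$ and $\widetilde{W_g}$ commute; since each is complete — with flows $(x,y) \mapsto (\phi_t(x),y)$ and $(x,y) \mapsto (x,\psi_t(y))$ — the field $U$ is complete, with flow $\Psi_t(x,y) = (\phi_t(x),\psi_t(y))$.

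Next I would record that $\widetilde f + \widetilde g$ is a first integral of $U$. Since $i_U \omega_{X\times Y} = d(\widetilde f + \widetilde g)$, we get $U(\widetilde f + \widetilde g) = d(\widetilde f + \widetilde g)(U) = \omega_{X\times Y}(U,U) = 0$; hence so is every power $(\widetilde f + \widetilde g)^{m}$. Because $i_{(\cdot)}\omega_{X\times Y}$ is linear over functions, from $d(\widetilde f + \widetilde g)^n = n(\widetilde f + \widetilde g)^{n-1}\,d(\widetilde f + \widetilde g)$ one reads off that the Hamiltonian vector field of $(\widetilde f + \widetilde g)^n$ equals
\[
n(\widetilde f + \widetilde g)^{n-1}\,U,
\]
which is algebraic, $f$ and $g$ being polynomials and $V_f$, $W_g$ algebraic.

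It then remains to verify completeness of $hU$ where $h := n(\widetilde f + \widetilde g)^{n-1}$, and here I would use the general fact that rescaling a complete vector field by one of its first integrals keeps it complete: setting $\Phi_t(p) := \Psi_{h(p)\,t}(p)$, the chain rule gives $\frac{d}{dt}\Phi_t(p) = h(p)\,U(\Phi_t(p))$, and since $h$ is constant on the $U$-orbit through $p$ (being a first integral) this equals $h(\Phi_t(p))\,U(\Phi_t(p)) = (hU)(\Phi_t(p))$; the same observation yields $\Phi_{t}\circ\Phi_s = \Phi_{t+s}$. Thus $\Phi_t$ is the flow of $hU$, it is defined for all $t \in \CC$, and each $\Phi_t$ is a holomorphic automorphism because $(t,p)\mapsto \Psi_t(p)$ is jointly holomorphic. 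Hence $(f+g)^n$ is the Hamiltonian function of a complete algebraic vector field on $X \times Y$.

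I do not expect a real obstacle here: the whole point is the observation that, on the product, the Hamiltonian vector field of $f+g$ is the trivially extended commuting sum $\widetilde{V_f}+\widetilde{W_g}$ — therefore complete — and that $f+g$ is one of its first integrals, after which the chain rule and the first-integral rescaling trick finish the argument. The only items needing a careful word are the joint holomorphy of the flow (so that each $\Phi_t$ is a genuine automorphism) and the convention identifying a function on a factor with a function on the product; and I would note in passing that the same proof works verbatim in the Stein case, with $\CC[\cdot]$ replaced by $\holo(\cdot)$.
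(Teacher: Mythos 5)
Your proposal is correct and follows essentially the same route as the paper: realize the Hamiltonian field of $f+g$ as the sum of the commuting trivial extensions (hence complete), then note that the field of $(f+g)^n$ is that field multiplied by $n(f+g)^{n-1}$, which lies in its kernel, so completeness persists. The only cosmetic differences are that you obtain the kernel membership directly from $\omega(U,U)=0$ instead of invoking Remark~\ref{lemma:shear}, and you spell out the time-reparametrization argument for why rescaling a complete field by a first integral preserves completeness, which the paper leaves implicit.
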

\begin{proof}
\begin{enumerate}
\item Let $V$ denote the complete vector field on $X$ with flow $\varphi_t$ and with Hamiltonian function $f$. Let $W$ denote the complete vector field with flow $\psi_t$ on $Y$ and with Hamiltonian function $g$. We denote the trivial extension of these functions and vector fields to the product $X \times Y$ by the same letters. The flow maps for $V$ and $W$ on $X \times Y$ are given by $\Phi_t(x,y) = (\varphi_t(x),y)$ and $\Psi_t(x,y) = (x,\psi_t(y))$, respectively. The Hamiltonian function $f+g$ corresponds to the vector field $V+W$ whose flow is given by $\Phi_t \circ \Psi_t (x,y) = \Psi_t \circ \Phi_t (x,y) = (\varphi_t(x), \psi_t(y))$. 
\item Since $d h^n = n h^{n-1} dh$, the vector field corresponding to $h$ is just multiplied by the function $n h^{n-1}$. Since the resulting vector field is again Hamiltonian, this function lies in the kernel of the vector field by Remark \ref{lemma:shear} and is thus again complete. \qedhere
\end{enumerate}

\end{proof}

\begin{proof}[Proof of Proposition \ref{prop:tensor}]
Let $\mathcal{F}$ denote the Lie subalgebra of
\[
    (\CC[{X \times Y}],\po{.}{.}_{X \times Y})
\]
that is generated by Hamiltonian functions corresponding to complete algebraic vector fields.

\begin{enumerate}
\item We have $(f_k)_k \subset \mathcal{F}$ and $(g_\ell)_\ell \subset \mathcal{F}$.
\item We have $\forall k \, \forall \ell \,:\, f_k \cdot g_\ell \in \mathcal{F}$, since $2 \cdot f_k \cdot g_\ell = (f_k + g_\ell)^2 - f_k^2 - g_\ell^2$ is a linear combination of Hamiltonian functions corresponding to complete vector fields by Lemma \ref{lemma:sums}.
\item For every $g \in \CC[Y]$ we have $\forall k \,:\, f_k \cdot g \in \mathcal{F}$. 
Indeed, every element of $\CC[Y]$ can be written as a linear combination of finitely many nested Poisson brackets. Without loss of generality, we may consider only one summand of nested Poisson brackets. Note that the case of zero Poisson brackets is already covered by the previous step. 
\[
g = \po{g_1}{ \dots \po{g_{m-1}}{g_m}_Y}_Y = \po{g_1}{\tilde{g}}_Y
.\]
We now obtain
\begin{align*}
f_k \cdot g = f_k \cdot \po{g_1}{\tilde{g}}_{X,Y} &= \po{f_k g_1}{\tilde{g}}_{X,Y} \\ &= \po{f_k g_1}{ \dots \po{g_{m-1}}{g_m}_{X \times Y}}_{X \times Y} \in \mathcal{F}
\end{align*}
\item For every $f \in \CC[X]$ and every $g \in \CC[Y]$ we have $f \cdot g \in \mathcal{F}$.
Similarly, we consider nested Poisson brackets of generators in $\CC[X]$:
\[
f = \po{f_1}{ \dots \po{f_{n-1}}{f_n}_X}_X = \po{f_1}{\tilde{f}}_X
\] and obtain
\begin{align*}
f \cdot g = \po{g f_1}{\tilde{f}}_{X \times Y} = \po{g f_1}{ \dots \po{f_{n-1}}{f_n}_{X \times Y} }_{X \times Y}  \in \mathcal{F}
\end{align*}
\item Taking linear combinations of products of the form $f \cdot g$ concludes the proof. \qedhere
\end{enumerate}
\end{proof}

\begin{remark}
Let $f \in \CC[X]$ and $g \in \CC[Y]$ be Hamiltonian functions of LNDs $V$ and $W$, respectively. Then $f \cdot g \in \CC[X \times Y]$ is the Hamiltonian function of the LND $g V + f W$. To see this, just note that $f, g \in \ker V \cap \ker W$, $[V,W] = 0$, and hence
\[
(g V + f W)^n = \sum_{k=0}^n g^k f^{n-k} V^k W^{n-k}
\]
eventually vanishes on each polynomial in $\CC[X \times Y]$.
\end{remark}

\begin{proposition} \label{prop: HDPtoSDP}
    Let $X$ be a Stein manifold equipped with a holomorphic symplectic form $\omega$. Suppose that 
    \begin{enumerate}
        \item[(i)] $X$ has the Hamiltonian density property, and
        \item[(ii)] there exist complete symplectic vector fields $\{ V_j \}_J$ where $J$ is an index set, such that $\{ [V_j^\flat] \}_J$ form a basis of $H^1_{\mathrm{dR}}(X)$.
    \end{enumerate}
    Then $X$ has the symplectic density property. 
\end{proposition}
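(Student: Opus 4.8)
The plan is to deduce the symplectic density property from the Hamiltonian density property by means of the short exact sequence of $\CC$-vector spaces
\[
0 \longrightarrow \mathrm{HVF}_\omega(X) \longrightarrow \mathrm{SVF}_\omega(X) \xrightarrow{\;V \mapsto [V^\flat]\;} H^1_{\mathrm{dR}}(X) \longrightarrow 0 .
\]
First I would verify exactness: the map $V \mapsto [V^\flat]$ is well defined since $i_V\omega = V^\flat$ is closed for symplectic $V$; its kernel is $\mathrm{HVF}_\omega(X)$ by the very definition of a Hamiltonian vector field; and it is onto, because a closed $1$-form $\alpha$ is hit by the symplectic vector field $\alpha^\sharp$, which satisfies $(\alpha^\sharp)^\flat = \alpha$. (Since $i_{[V,W]}\omega = d(i_V i_W\omega)$ for symplectic $V,W$ by Cartan's formula, this is in fact a morphism of Lie algebras onto the abelian $H^1_{\mathrm{dR}}(X)$, but only linearity will be used below.)

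Next, let $\mathfrak{g} \subseteq \mathrm{SVF}_\omega(X)$ denote the Lie algebra generated by all complete holomorphic symplectic vector fields, and let $\overline{\mathfrak{g}}$ be its closure, which is again a linear subspace. Every complete Hamiltonian vector field is a complete symplectic vector field, so $\mathfrak{g}$ contains the Lie algebra generated by the complete Hamiltonian vector fields; since $\mathrm{HVF}_\omega(X)$ carries the subspace topology inherited from $\mathrm{SVF}_\omega(X)$, hypothesis~(i) then gives $\mathrm{HVF}_\omega(X) \subseteq \overline{\mathfrak{g}}$. By hypothesis~(ii) each $V_j$ is a complete symplectic vector field, hence $V_j \in \mathfrak{g} \subseteq \overline{\mathfrak{g}}$.

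Now fix an arbitrary $W \in \mathrm{SVF}_\omega(X)$. As $\{[V_j^\flat]\}_J$ is a basis of $H^1_{\mathrm{dR}}(X)$, we may write $[W^\flat] = \sum_{j\in F} c_j [V_j^\flat]$ with $F \subseteq J$ finite and $c_j \in \CC$. By linearity of $\flat$, the vector field $W - \sum_{j\in F} c_j V_j$ is symplectic and has vanishing class in $H^1_{\mathrm{dR}}(X)$, hence is Hamiltonian by exactness, hence lies in $\overline{\mathfrak{g}}$. Since also $\sum_{j\in F} c_j V_j \in \overline{\mathfrak{g}}$ and $\overline{\mathfrak{g}}$ is a linear subspace, we get $W \in \overline{\mathfrak{g}}$. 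As $W$ was arbitrary, $\overline{\mathfrak{g}} = \mathrm{SVF}_\omega(X)$, i.e.\ $X$ has the symplectic density property.

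The proof is essentially a diagram chase, so I do not anticipate a real obstacle; the only points that deserve care are the exactness of the displayed sequence — especially surjectivity, witnessed concretely by the vector fields $\alpha^\sharp$ — and the remark that "density in $\mathrm{HVF}_\omega(X)$" genuinely yields approximation inside $\mathrm{SVF}_\omega(X)$, which is exactly what the subspace topology on $\mathrm{HVF}_\omega(X)$ guarantees. Completeness of the $V_j$ is used precisely once, to place them in the generating set of $\mathfrak{g}$, so hypothesis~(ii) cannot be weakened to merely "some symplectic vector fields".
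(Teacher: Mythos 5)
Your proof is correct and follows essentially the same route as the paper: decompose an arbitrary symplectic vector field as a finite linear combination of the complete fields $V_j$ plus a Hamiltonian remainder (using that $\{[V_j^\flat]\}_J$ is a basis of $H^1_{\mathrm{dR}}(X)$), then apply the Hamiltonian density property to the remainder. Your version is in fact slightly more careful than the paper's, since you make explicit the exactness of the sequence $0 \to \mathrm{HVF}_\omega(X) \to \mathrm{SVF}_\omega(X) \to H^1_{\mathrm{dR}}(X) \to 0$ and the passage to closures, whereas the paper phrases the last step as writing $W$ as a Lie combination rather than approximating it.
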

\begin{proof}
    Let $V$ be a symplectic vector field on $X$. By (ii) there are complete symplectic vector fields $V_{i_1}, \dots, V_{i_m} \in \{ V_j \}_J$ and a Hamiltonian vector field $W$ such that 
    \[
        V = c_1 V_{i_1} + \dots + c_m V_{i_m} + W
    \]
    for some $c_j \in \CC, j= 1, \dots, m$.
    By (i) the vector field $W$ can be written as a Lie combination of complete Hamiltonian vector fields. Hence $V$ is in the Lie algebra generated by complete symplectic vector fields. 
\end{proof}

\begin{proposition}
    Let $X$ be a Stein manifold equipped with a holomorphic symplectic form $\omega$. Suppose that $X$ has the symplectic density property and the dimension $m=\dim H^1_{\mathrm{dR}}(X)$ is finite.
    Then there exist complete symplectic vector fields $V_1, \dots, V_m$ such that $[V_1^\flat], \dots, [V_m^\flat]$ form a basis of $H^1_{\mathrm{dR}}(X)$. 
\end{proposition}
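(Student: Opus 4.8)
This statement is a converse to Proposition~\ref{prop: HDPtoSDP}, and the plan is to produce the required vector fields via the \emph{flux homomorphism} together with the elementary fact that a dense linear subspace of a finite-dimensional topological vector space is the whole space. Define
\[
  \Lambda \colon \mathrm{SVF}_\omega(X) \longrightarrow H^1_{\mathrm{dR}}(X), \qquad \Lambda(V) = [\,V^\flat\,] = [\,i_V \omega\,].
\]
This is well defined since $V$ is symplectic exactly when $V^\flat$ is closed; it is clearly $\CC$-linear; its kernel is precisely $\mathrm{HVF}_\omega(X)$; and it is surjective, because for a closed holomorphic $1$-form $\alpha$ the vector field $\alpha^\sharp$ is symplectic with $(\alpha^\sharp)^\flat = \alpha$, so $\Lambda(\alpha^\sharp) = [\alpha]$. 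I would also note that $\Lambda$ is continuous for the usual (compact-open / Fréchet) topologies: $V \mapsto i_V\omega$ is continuous, and since $X$ is Stein we have $H^1_{\mathrm{dR}}(X) \cong H^1(X, \CC)$ finite-dimensional, so choosing loops $\gamma_1, \dots, \gamma_m$ whose classes span $H_1(X; \CC)$, the period map $\alpha \mapsto \bigl( \int_{\gamma_1}\alpha, \dots, \int_{\gamma_m}\alpha \bigr)$ is continuous and linear on closed $1$-forms and, by de Rham, descends to a linear isomorphism $H^1_{\mathrm{dR}}(X) \to \CC^m$; hence taking de Rham classes, and therefore $\Lambda$, is continuous.

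The key algebraic point is that $\Lambda$ annihilates all Lie brackets. For symplectic vector fields $V, W$, using $\mathcal{L}_V\omega = 0$ and $d(i_W\omega) = 0$ we get
\[
  i_{[V,W]}\omega = \mathcal{L}_V(i_W\omega) = i_V\, d(i_W\omega) + d\bigl( i_V i_W\omega \bigr) = d\bigl( \omega(W,V) \bigr),
\]
so $[V,W]$ is Hamiltonian and $\Lambda([V,W]) = 0$. Consequently, if $S$ denotes a set of complete holomorphic symplectic vector fields and $\mathfrak{g} \subseteq \mathrm{SVF}_\omega(X)$ is the Lie algebra it generates, then $\mathfrak{g} = \spann_\CC(S) + [\mathfrak{g}, \mathfrak{g}]$, and since $\Lambda$ kills the second summand,
\[
  \Lambda(\mathfrak{g}) = \spann_\CC\{ \Lambda(V) : V \in S \}.
\]

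Now take $S$ to be the set of \emph{all} complete holomorphic symplectic vector fields. By the symplectic density property, the Lie algebra $\mathfrak{g}$ it generates is dense in $\mathrm{SVF}_\omega(X)$. Since $\Lambda$ is continuous, linear and surjective and $H^1_{\mathrm{dR}}(X)$ is finite-dimensional, $\Lambda$ maps every dense subspace onto all of $H^1_{\mathrm{dR}}(X)$: if $\Lambda(\mathfrak{g})$ were a proper subspace, there would be a nonzero linear functional $\varphi$ on $H^1_{\mathrm{dR}}(X)$ vanishing on it, and then $\varphi \circ \Lambda$ would be a nonzero continuous linear functional on $\mathrm{SVF}_\omega(X)$ vanishing on the dense set $\mathfrak{g}$, which is impossible. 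Hence $\spann_\CC\{\Lambda(V) : V \in S\} = H^1_{\mathrm{dR}}(X)$, an $m$-dimensional space; extracting a basis from this spanning set yields finitely many complete symplectic vector fields $V_1, \dots, V_m \in S$ with $[V_1^\flat], \dots, [V_m^\flat]$ a basis of $H^1_{\mathrm{dR}}(X)$, as required.

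The only genuinely technical step I expect is the continuity of $\Lambda$ — that is, fixing the topology on $\mathrm{SVF}_\omega(X)$ and checking that passing to a de Rham cohomology class is continuous (for which the period description above, valid because $X$ is Stein, does the job). Everything else is the soft observation that brackets of symplectic vector fields are Hamiltonian, plus finite-dimensional linear algebra.
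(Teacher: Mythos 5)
Your proof is correct and follows essentially the same route as the paper: the symplectic density property plus the fact that a bracket of two symplectic vector fields is Hamiltonian reduces everything in cohomology to the linear span of the classes $[V^\flat]$ of complete symplectic fields, and finite-dimensionality of $H^1_{\mathrm{dR}}(X)$ lets you extract a basis. Your packaging via the continuous linear map $\Lambda(V)=[V^\flat]$ and the annihilator argument in fact makes explicit the limit step (continuity of passing to de Rham classes, via periods) that the paper's proof treats implicitly when it passes from approximation to exact membership in a span.
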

\begin{proof}
    Let $[\alpha_1], \dots, [\alpha_m]$ be a basis of $H^1_{\mathrm{dR}}(X)$. By the symplectic density property, each $\alpha_j^\sharp, j \in \{1, \dots, m\}$, can be approximated by a Lie combination of complete symplectic vector fields. Since every bracket of two symplectic vector fields is Hamiltonian (see e.g.\ \cite{MR1853077}*{Proposition 18.3}), there are complete symplectic vector fields $V_{j1}, \dots, V_{j k_j}$ such that $\alpha_j^\sharp$ can be approximated by $V_{j1} + V_{j2} + \dots + V_{j k_j}$
    modulo $\mathrm{HVF}_\omega(X)$ with $m_j \le m$. Among the $\sum_j m_j$ complete symplectic vector fields
    \[
        V_{11}, \dots, V_{1 k_1}, V_{21}, \dots, V_{2 k_2}, \dots, V_{m1}, \dots, V_{m k_m},
    \]
    choose a minimal set of complete symplectic vector fields $ W_{1}, \dots, W_{ \tilde{m}}$
    such that $[W_{1}^\flat], \dots, [W_{ \tilde{m}}^\flat]$
    is a minimal spanning set of 
    \[
        \mathrm{Span}([V_{11}^\flat], \dots, [V_{1 k_1}^\flat], [V_{21}^\flat], \dots, [V_{2 k_2}^\flat], \dots, [V_{m1}^\flat], \dots, [V_{m k_2}^\flat]).
    \]
    Since $[\alpha_1], \dots, [\alpha_m]$ are in this span, $\tilde{m} = m$. We thus obtain $m$ complete symplectic fields whose corresponding $1$-forms form a basis of $H^1_{\mathrm{dR}}(X)$.
\end{proof}

\section{Applications}
\label{sec-applications}

\subsection{$\CC^2$}
We equip $\CC^2$ with the standard algebraic symplectic form $\omega = d z \wedge d w$ where $(z,w) \in \CC^2$.
\begin{lemma}
    The Euclidean space $\CC^2$ equipped with the algebraic symplectic form $dz \wedge d w$ has the algebraic Hamiltonian density property. 
\end{lemma}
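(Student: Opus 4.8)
The plan is to generate all of $\CC[z,w]$ from the shear vector fields via Poisson brackets, using the reduction noted in the remark following the definition: it suffices to exhibit a set of Hamiltonian functions of complete algebraic vector fields whose Lie span under the Poisson bracket is all of $\CC[z,w]$. Recall that for $\omega = dz\wedge dw$ the Hamiltonian vector field of a function $f$ is $V_f = \frac{\partial f}{\partial w}\frac{\partial}{\partial z} - \frac{\partial f}{\partial z}\frac{\partial}{\partial w}$, and the associated Poisson bracket is $\po{f}{g} = \frac{\partial f}{\partial z}\frac{\partial g}{\partial w} - \frac{\partial f}{\partial w}\frac{\partial g}{\partial z}$.

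First I would observe that every $p \in \CC[z]$, viewed as a function on $\CC^2$, has Hamiltonian vector field $V_p = -p'(z)\,\partial/\partial w$, a shear whose flow $(z,w)\mapsto(z, w - t\,p'(z))$ is defined for all complex time, hence complete; symmetrically, every $q \in \CC[w]$ has the complete shear $V_q = q'(w)\,\partial/\partial z$ as its Hamiltonian vector field. Therefore the Lie subalgebra $\mathcal{F} \subset (\CC[z,w], \po{.}{.})$ generated by the complete algebraic Hamiltonian vector fields contains $\CC[z] \cup \CC[w]$, and in particular all monomials $z^m$ and $w^n$ with $m, n \ge 0$.

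Next I would compute, for $a, b \ge 0$,
\[
\po{z^{a+1}}{w^{b+1}} = (a+1)(b+1)\, z^a w^b,
\]
which shows that every monomial $z^a w^b$ lies in $\mathcal{F}$; by $\CC$-linearity $\mathcal{F} = \CC[z,w]$. Via the Lie algebra isomorphism $\Theta$ this says exactly that the Lie algebra generated by the complete algebraic Hamiltonian vector fields is all of $\mathrm{HVF}^{\mathrm{alg}}_\omega(\CC^2)$, i.e.\ $\CC^2$ has the algebraic Hamiltonian density property. There is no genuine obstacle here: the only points needing (minor) care are fixing the sign convention so that the listed shears really are the Hamiltonian fields of $p(z)$ and $q(w)$, and recording that shear vector fields are complete; everything else is the one-line bracket computation above.
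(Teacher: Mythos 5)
Your proof is correct and follows essentially the same route as the paper: both rest on the completeness of the polynomial shears in the $z$- and $w$-directions and the single bracket computation $\po{z^{j}}{w^{k}} = jk\, z^{j-1}w^{k-1}$ producing all monomials. The extra details you supply (explicit flows and sign conventions) are fine but not a different argument.
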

\begin{proof}
    The Poisson bracket is determined by $\{ z, w \} = 1$, linearity and Leibniz' rule. 
    Any monomial in $(z, w) \in \CC^2$ is in the Lie algebra generated by Hamiltonian functions corresponding to complete vector fields, since
    \begin{align*}
        \{ z^j, w^k \} = jk z^{j-1} w^{k-1}
    \end{align*}
    and $z^j \frac{\partial}{\partial w}, w^k \frac{\partial}{\partial z}$ are complete. 
\end{proof}

\subsection{$\CC \times \CC^\ast$}
We equip $\CC \times \CC^\ast$ with the unique invariant algebraic symplectic form $\omega = dz \wedge \frac{d w}{w}$ where $(z,w) \in \CC \times \CC^\ast$.

\begin{lemma}
    The smooth affine variety $\CC \times \CC^\ast$ equipped with the algebraic symplectic form $dz \wedge \frac{d w}{w}$ has the algebraic Hamiltonian density property. 
\end{lemma}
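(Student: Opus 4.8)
The plan is to argue exactly as in the case of $\CC^2$, working directly in the Poisson algebra of the coordinate ring $\CC[z,w,w^{-1}]$ of $\CC\times\CC^\ast$, which is spanned over $\CC$ by the monomials $z^a w^b$ with $a\in\NN$ and $b\in\ZZ$. First I would record the Poisson bracket attached to $\omega=dz\wedge\tfrac{dw}{w}$: computing the Hamiltonian vector field of a function $f$ from $i_{X_f}\omega=df$ gives $X_f=w\,(\partial_w f)\,\partial_z-w\,(\partial_z f)\,\partial_w$, hence $\po{z}{w}=w$ and, more to the point,
\[
\po{z^j}{w^k}=jk\,z^{j-1}w^{k},\qquad j\ge 1,\ k\in\ZZ .
\]
Compared with the $\CC^2$ case the only difference is the extra factor $w$ coming from $\tfrac{dw}{w}$, which has the pleasant effect that the exponent of $w$ is left unchanged by bracketing against a power of $z$.

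Next I would exhibit complete algebraic Hamiltonian vector fields. For each $j\ge 1$ the function $z^j$ has Hamiltonian field $X_{z^j}=-j\,z^{j-1}\,w\,\partial_w$; along each fibre $z=\mathrm{const}$ this is a linear field $c\,w\,\partial_w$ whose flow is $w\mapsto e^{ct}w$, which is complete and preserves $\CC\times\CC^\ast$. For each $k\in\ZZ\setminus\{0\}$ the function $w^k$ has Hamiltonian field $X_{w^k}=k\,w^k\,\partial_z$, a shear in the $z$-direction with first integral $w$, hence complete; the negative exponents cause no trouble since $w$ is nowhere zero on $\CC\times\CC^\ast$.

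The generation step is then immediate. Let $\mathcal F$ be the Poisson (Lie) subalgebra generated by $\{z^j:j\ge 1\}$ and $\{w^k:k\in\ZZ\setminus\{0\}\}$. From $\po{z^{a+1}}{w^k}=(a+1)k\,z^a w^k$ we obtain $z^a w^k\in\mathcal F$ for every $a\ge 0$ and every $k\neq 0$; together with the generators $z^j$ (the case $k=0$, $a\ge 1$) and the constants, these monomials span $\CC[z,w,w^{-1}]$, so $\mathcal F$ equals $\CC[z,w,w^{-1}]$ modulo constants. Since the map $f\mapsto\flat(df)$ is a Lie algebra isomorphism from $\CC[z,w,w^{-1}]/\CC$ onto $\mathrm{HVF}^{\mathrm{alg}}_\omega(\CC\times\CC^\ast)$ (the algebraic analogue of $\Theta$ in \eqref{iso-Theta}, cf.\ Remark~(1)), this establishes the algebraic Hamiltonian density property.

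I would expect no real obstacle here; the only points deserving attention are getting the factor of $w$ right in the formula for $X_f$ and in the Poisson bracket, and checking that $X_{z^j}$ and $X_{w^k}$ are complete and do not leave $\CC\times\CC^\ast$. If anything the computation is easier than for $\CC^2$: the conservation of the $w$-exponent under the relevant brackets means the negative powers of $w$ are supplied directly by the complete fields $X_{w^k}$ with $k<0$, rather than having to be produced from others.
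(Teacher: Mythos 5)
Your argument is correct and follows essentially the same route as the paper: the same generating Hamiltonians $z^j$ and $w^k$ ($k\neq 0$), the same bracket $\po{z^j}{w^k}=jk\,z^{j-1}w^k$, and the same spanning conclusion for $\CC[z,w,w^{-1}]$. The only (harmless) difference is that you derive the Poisson bracket from the explicit formula for $X_f$ and verify completeness of all $X_{z^j}$, $X_{w^k}$ directly, whereas the paper reads off the bracket from the commutator of the two basic complete fields $-w\,\partial_w$ and $w\,\partial_z$ and then extends by Leibniz' rule.
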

\begin{proof}
    The complete Hamiltonian vector field $V = - w \frac{\partial}{\partial w}$ corresponds to the function $f = z$ and 
    the complete Hamiltonian vector field $W =  w \frac{\partial}{\partial z}$ corresponds to the function $f = w$. Using \eqref{rel: Poisson-Lie} we obtain from the Lie bracket of these two vector fields
    \[
        [V, W] =  - w \frac{\partial}{\partial z},
    \]
the Poisson bracket $\{ z, w \} = w$.
By Leibniz' rule, $\{ z^j, w^k \} = j k z^{j-1} w^k$
for $j \in \NN, k \in \ZZ \setminus \{0 \}$. Therefore, the Lie subalgebra generated by 
\begin{align*}
    \{ z^j, w^k: j \in \NN, k \in \ZZ \setminus \{0 \} \}
\end{align*}
contains any function in $\CC[z, w, w^{-1}]$, i.e.\ any regular function on $\CC \times \CC^\ast$. 
\end{proof}

\subsection{The torus $(\CC^\ast)^{2n}$}
We equip $(\CC^\ast)^{2n}$ with the algebraic symplectic form $\omega = \sum_{j=1}^n \frac{d z_j}{z_j} \wedge \frac{d w_j}{w_j}$
where $(z_1, \dots, z_n, w_1, \dots, w_n) \in (\CC^\ast)^{2n}$.

\begin{proposition} \label{prop: torusHDP}
    The smooth affine variety $(\CC^\ast)^{2n}$ equipped with the algebraic symplectic form $\sum_{j=1}^n \frac{d z_j}{z_j} \wedge \frac{d w_j}{w_j}$ has the algebraic Hamiltonian density property. 
\end{proposition}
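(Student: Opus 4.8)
The plan is to prove the stronger statement that \emph{every} Laurent monomial on $(\CC^\ast)^{2n}$ is itself the Hamiltonian function of a \emph{complete} algebraic vector field. Since the monomials $m_{a,b} := \prod_{j=1}^n z_j^{a_j} w_j^{b_j}$, $(a,b) \in \ZZ^{2n}$, form a vector-space basis of the coordinate ring $\CC[(\CC^\ast)^{2n}] = \CC[z_1^{\pm 1}, \ldots, w_n^{\pm 1}]$, this at once yields the algebraic Hamiltonian density property: under the isomorphism $\Theta$ of \eqref{iso-Theta} the complete algebraic Hamiltonian vector fields then already span all of $\mathrm{HVF}^{\mathrm{alg}}_\omega\bigl((\CC^\ast)^{2n}\bigr)$, so they certainly generate it as a Lie algebra. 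In particular, no nested Poisson brackets are needed.

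To carry this out, fix $(a,b) \in \ZZ^{2n}$, put $m := m_{a,b}$, and set $D := \sum_{j=1}^n \bigl( b_j\, z_j \tfrac{\partial}{\partial z_j} - a_j\, w_j \tfrac{\partial}{\partial w_j} \bigr)$. Using the logarithmic differential $dm = m \sum_j \bigl( a_j \tfrac{dz_j}{z_j} + b_j \tfrac{dw_j}{w_j} \bigr)$ one computes $i_D\, \omega = \sum_j \bigl( a_j \tfrac{dz_j}{z_j} + b_j \tfrac{dw_j}{w_j} \bigr)$, hence $i_{mD}\, \omega = m\, i_D\, \omega = dm$; thus $V_m := mD$ is the Hamiltonian vector field of $m$. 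Next, $D(m) = \bigl( \sum_j (a_j b_j - b_j a_j) \bigr) m = 0$, i.e.\ $m$ is a first integral of $V_m$. Consequently the integral curve of $V_m$ through a point with coordinates $(z_j^0, w_j^0)$ is $z_j(t) = e^{\, b_j m_0 t}\, z_j^0$, $w_j(t) = e^{-a_j m_0 t}\, w_j^0$ with $m_0 := m(z^0, w^0)$ — one checks directly that this solves the Hamiltonian system and keeps $m \equiv m_0$ — and it is defined for all $t \in \CC$ and stays in $(\CC^\ast)^{2n}$. Hence $V_m$ is complete. (This is precisely the mechanism behind Remark~\ref{lemma:shear} and Lemma~\ref{lemma:sums}(2): along its trajectories $V_m = m_0 D$ is a constant multiple of the complete vector field $D$, which is itself a commuting linear combination of the complete fields $z_j\, \partial / \partial z_j$ and $w_j\, \partial / \partial w_j$.) This finishes the proof.

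Since the monomials already give complete fields, there is essentially no obstacle; the only point requiring some care is the realization that one should \emph{not} imitate the nested-Poisson-bracket computations used for $\CC^2$ or $\CC \times \CC^\ast$. The reason this can be avoided here is that, in logarithmic coordinates $u_j = \log z_j$, $v_j = \log w_j$ — in which $\omega = \sum_j du_j \wedge dv_j$ — a Laurent monomial becomes an exponential $\exp\langle (a,b), (u,v) \rangle$, and the Hamiltonian field of such an exponential is always a complete shear. As an alternative one could reduce to $n = 1$ via Theorem~\ref{thm:directprodHam} applied to $(\CC^\ast)^{2n} = \prod_{j=1}^n \bigl( (\CC^\ast)^2, \tfrac{dz_j}{z_j} \wedge \tfrac{dw_j}{w_j} \bigr)$, and then treat $(\CC^\ast)^2$ exactly as in the $\CC \times \CC^\ast$ lemma: the Hamiltonians $z^j$ and $w^k$ ($j, k \in \ZZ$) induce complete fields, and from $\{ z, w \} = zw$ (obtained from \eqref{rel: Poisson-Lie}) together with Leibniz' rule one gets $\{ z^j, w^k \} = jk\, z^j w^k$, so every Laurent monomial lies in the Lie algebra generated.
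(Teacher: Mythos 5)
Your proposal is correct, and your primary argument takes a genuinely different (and stronger) route than the paper. The paper first reduces to the case $n=1$ via the direct product theorem (Theorem \ref{thm:directprodHam}) and then, on $(\CC^\ast)^2$, only uses that the ``pure'' Hamiltonians $z^j$ and $w^k$ induce complete fields, producing the mixed monomials $z^j w^k$ through a single Poisson bracket $\{z^j, w^k\} = jk\, z^j w^k$ obtained from \eqref{rel: Poisson-Lie}; this is exactly the alternative you sketch at the end. Your main argument instead verifies directly that \emph{every} Laurent monomial $m = \prod_j z_j^{a_j} w_j^{b_j}$ has Hamiltonian field $V_m = m\,\sum_j \bigl(b_j z_j \partial_{z_j} - a_j w_j \partial_{w_j}\bigr)$, and that this field is complete because $m$ lies in the kernel of the complete linear field it multiplies (the mechanism of Remark \ref{lemma:shear}); your computations $i_{mD}\omega = dm$, $D(m)=0$, and the explicit flow are all correct. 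This yields more than the proposition asks: the \emph{linear span} of complete algebraic Hamiltonian vector fields is already all of $\mathrm{HVF}^{\mathrm{alg}}_\omega\bigl((\CC^\ast)^{2n}\bigr)$, so no Lie brackets and no reduction to $n=1$ are needed, and the argument is uniform in $n$. What the paper's route buys in exchange is economy of verification (completeness is checked only for shear-type fields depending on one group of variables) and an illustration of the product theorem that the rest of the paper develops; what your route buys is a sharper structural statement and a shorter, bracket-free proof.
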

\begin{proof}
    It suffices to show the algebraic Hamiltonian density property for $\CC^\ast_z \times \CC^\ast_w$ with the invariant form $\frac{d z}{z} \wedge \frac{d w}{w}$ by Theorem \ref{thm:directprodHam}. 

    The complete Hamiltonian vector field
\[
    V = - j z^j w \frac{\partial}{\partial w}, \quad j \in \ZZ \setminus \{0 \}
\]
corresponds to the function $f = z^j$ and the complete Hamiltonian vector field
\[
    W =  k z w^k \frac{\partial}{\partial z}, \quad k \in \ZZ \setminus \{0 \}
\]
corresponds to the function $f = w^k$. 

Using \eqref{rel: Poisson-Lie} we obtain from the Lie bracket of these two vector fields
\[
    [V, W] = j^2 k z^j w^{k+1} \frac{\partial}{\partial w} - j k^2 z^{j+1} w^k \frac{\partial}{\partial z}
\]
the Poisson bracket $\{ z^j, w^k \} = j k z^j w^k$ 
for $j, k \in \ZZ \setminus \{0 \}$. Therefore, the Lie subalgebra generated by $ \{ z^j, w^k: j, k \in \ZZ \setminus \{0 \} \}$
contains any function in $\CC[z, w, z^{-1}, w^{-1}]$, i.e.\ any regular function on $\CC^\ast \times \CC^\ast$. 
\end{proof}

\begin{theorem}
    The smooth affine variety $(\CC^\ast)^{2n}$ equipped with the algebraic symplectic form $\sum_{j=1}^n \frac{d z_j}{z_j} \wedge \frac{d w_j}{w_j}$ has the algebraic symplectic density property.
\end{theorem}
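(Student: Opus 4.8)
The plan is to combine the algebraic Hamiltonian density property of $(\CC^\ast)^{2n}$, which was just established in Proposition \ref{prop: torusHDP}, with a criterion in the spirit of Proposition \ref{prop: HDPtoSDP}, but now in the algebraic category. The only gap between the two properties is the first de Rham cohomology $H^1_{\mathrm{dR}}((\CC^\ast)^{2n})$, so the heart of the argument is to exhibit, for each cohomology class, a \emph{complete algebraic symplectic} vector field whose associated $1$-form represents that class.

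\textbf{Step 1: Identify the cohomology and the candidate fields.} Since $(\CC^\ast)^{2n}$ is homotopy equivalent to the real torus $(S^1)^{2n}$, the space $H^1_{\mathrm{dR}}((\CC^\ast)^{2n})$ is $2n$-dimensional, with basis represented by the logarithmic forms $\frac{dz_j}{z_j}$ and $\frac{dw_j}{w_j}$ for $j = 1, \dots, n$. I would then check that each of these closed $1$-forms is of the form $V^\flat$ for a vector field $V$ that is both symplectic and complete. Concretely, with $\omega = \sum_j \frac{dz_j}{z_j} \wedge \frac{dw_j}{w_j}$, the vector field $V = -w_j \frac{\partial}{\partial w_j}$ (the generator of the $\CC^\ast$-action scaling $w_j$) satisfies $i_V \omega = \frac{dz_j}{z_j}$, which is closed but not exact; similarly $W = z_j \frac{\partial}{\partial z_j}$ gives $i_W \omega = -\frac{dw_j}{w_j}$. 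These are the infinitesimal generators of the torus action on $(\CC^\ast)^{2n}$ and are manifestly complete algebraic vector fields. Thus $\{ [V_j^\flat], [W_j^\flat] \}_{j=1}^n$ is a basis of $H^1_{\mathrm{dR}}$.

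\textbf{Step 2: Split an arbitrary symplectic field and invoke HDP.} Let $V$ be an arbitrary algebraic symplectic vector field on $(\CC^\ast)^{2n}$. Then $[i_V\omega]$ is a class in $H^1_{\mathrm{dR}}$, so by Step 1 there are constants $c_j, d_j \in \CC$ with $i_V\omega - \sum_j (c_j\, i_{V_j}\omega + d_j\, i_{W_j}\omega)$ exact; equivalently $V - \sum_j (c_j V_j + d_j W_j) =: U$ is an algebraic Hamiltonian vector field. By Proposition \ref{prop: torusHDP} and Remark (2) after the definition (algebraic HDP, i.e.\ the Lie algebra generated by complete algebraic Hamiltonian fields is all of $\mathrm{HVF}^{\mathrm{alg}}_\omega$), $U$ is a Lie combination of complete algebraic Hamiltonian vector fields. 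Hence $V = U + \sum_j (c_j V_j + d_j W_j)$ lies in the Lie algebra generated by complete algebraic symplectic vector fields, which is the algebraic symplectic density property. (One should note, exactly as in Proposition \ref{prop: HDPtoSDP}, that no closure is needed in the algebraic setting — the statement is an exact equality of Lie algebras — so this is genuinely a decomposition rather than an approximation.)

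\textbf{Main obstacle.} There is no serious obstacle; the only point requiring a little care is Step 1, namely verifying that the chosen torus-generating fields $V_j, W_j$ are (a) symplectic — which follows because $i_{V_j}\omega = \frac{dz_j}{z_j}$ is closed, being a logarithmic form — and (b) that their classes are linearly independent and span $H^1_{\mathrm{dR}}$, which follows from the standard computation of the de Rham cohomology of the algebraic torus (the logarithmic forms $\frac{dz_j}{z_j}$ generate it freely). Once this is in place the result is immediate from the already-established algebraic Hamiltonian density property. I would therefore state and prove a short algebraic analogue of Proposition \ref{prop: HDPtoSDP} (with "dense" replaced by "equal") and then apply it.
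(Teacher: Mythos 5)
Your proposal is correct and follows essentially the same route as the paper: combine the algebraic Hamiltonian density property (Proposition \ref{prop: torusHDP}) with the splitting argument of Proposition \ref{prop: HDPtoSDP}, using the complete torus-generating fields $z_j \frac{\partial}{\partial z_j}$, $w_j \frac{\partial}{\partial w_j}$ as representatives of a basis of $H^1_{\mathrm{dR}}((\CC^\ast)^{2n}) \cong \CC^{2n}$. (There is only a harmless sign slip in your computation of $i_{W}\omega$ for $W = z_j \frac{\partial}{\partial z_j}$, which gives $+\frac{dw_j}{w_j}$ rather than $-\frac{dw_j}{w_j}$, and this does not affect the spanning argument.)
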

\begin{proof}
    Since $(\CC^\ast)^{2n}$ is affine, we have
    \begin{align*}
        H^1_{dR}((\CC^\ast)^{2n})  \cong  H^1((\CC^\ast)^{2n}, \C)  = \CC^{2n}
    \end{align*}
    By Proposition \ref{prop: torusHDP} and Proposition \ref{prop: HDPtoSDP}, it suffices to find complete symplectic vector fields whose corresponding 1-forms form a basis of $H^1_{dR}((\CC^\ast)^{2n})$. The following $2n$ vector fields
    \begin{align*}
        z_1 \frac{\partial}{\partial z_1}, \dots, z_n \frac{\partial}{\partial z_n}, w_1 \frac{\partial}{\partial w_1}, \dots, w_n \frac{\partial}{\partial w_n},
    \end{align*} 
    form such a set of representatives. 
\end{proof}

\section{Traceless Calogero--Moser space}
\label{sec-traceless}
We recall the definitionof the Calogero--Moser variety by Wilson \cite{MR1626461} 
\[
\cabove := \{ (X,Y) \in \matnn \oplus \matnn: \mathrm{rank}([X,Y] - i I_n) =1 \} 
\]
and the Calogero--Moser space $\camo = \cabove /\!/ \gl_n(\C)$ of $n$ particles. Notice that we choose the constant $-i$ in the rank condition, such that this equation is preserved under the involution $\tau$ which we will introduce below in equation \eqref{traceless-involution}. In general, our computations in the current section remain valid for generic rank condition $\rank([X, Y] + \lambda I_n) = 1, \lambda \in \C \setminus \{0\}$.
Consider the decomposition 
\begin{align*}
    \cabove = \C^2 \oplus \rabove, \quad \rabove := \{ (A,B) \in \mathfrak{sl}_{n} \oplus \mathfrak{sl}_{n}: \mathrm{rank}([A,B] - iI_n) =1 \} 
\end{align*}
where the coordinate functions on the first factor $\C^2$ correspond to $(\tr X, \tr Y)$. The $\gl_n(\C)$-action on $\C^2$ is trivial; in other words, it commutes with the action of the two one-parameter unipotent groups 
\begin{align*}
    ( X, Y) \mapsto (X, Y + t I_n), \quad (X,Y) \mapsto (X+ t I_n, Y)
\end{align*}
whose Hamiltonians functions are $\tr X$ and $\tr Y$, respectively.
Hence, the decomposition descends to the $\gl_n(\C)$-quotient 
\begin{align*}
    \camo = \C^2 \oplus \tlcamo 
\end{align*}
where $\tlcamo$ denotes the quotient $\rabove /\!/ \gl_n(\C)$. 
Observe that the subvariety $\tlcamo \cong \camo /\!/ \C^2_+$ is smooth, since each of the two $\C_+$-actions changes the conjugacy class of the pair $(X,Y)$ and is thus free. 

From the symplectic point of view, consider the standard symplectic form on $\matnn \times \matnn$ given by
\begin{equation}
\widehat{\Omega} = \tr (d X \wedge d Y) = \sum_{j, k=1}^n d X_{jk} \wedge d Y_{kj}
\end{equation}
The form $\widehat{\Omega}$ is invariant under conjugation. The $\gl_n(\CC)$-action admits a moment map 
\begin{align*}
	\mu \colon \matnn \times \matnn \to \mathfrak{sl}_n (\mathbb{C}) \cong \mathfrak{sl}_n^\ast (\mathbb{C}), \; (X,Y) \mapsto [X, Y]
\end{align*}
where we identify the Lie algebra with its dual using the trace form $\left< M, N\right> =  \tr(MN)$. 

Take $\xi \in \mathfrak{sl}_n$ with diagonal entries $\xi_{jj}=0$ and with off-diagonal entries $\xi_{jk} = -i$ where $j \neq k$. The Calogero--Moser space $\camo$ can be endowed with a symplectic form $\Omega$ as the symplectic reduction $\mu^{-1}(\gln \cdot \xi)/\gln$. For the traceless Calogero--Moser space $\tlcamo$, take the restricted moment map
\begin{align*}
	\mu \colon \mathfrak{sl}_n \oplus \mathfrak{sl}_n \to \mathfrak{sl}_n (\mathbb{C}) = \mathfrak{sl}_n^\ast (\mathbb{C}), \; (A,B) \mapsto [A, B].
\end{align*}
Then $\tlcamo$ can be seen as the symplectic quotient $\mu^{-1}(\gln \cdot \xi) / \gln$. 

Let us denote the inclusion by $\jmath \colon \tlcamo \hookrightarrow \camo$. Consider the 2-form on $\tlcamo$ given by $\omega = \jmath^\ast \Omega$. 
Since the bracket $\{ \tr X, \tr Y \} = n$ between the two defining functions of the subvariety $\tlcamo$ is nowhere vanishing, the intersection of the tangent bundle $T \tlcamo$ and the orthogonal bundle $(T \tlcamo)^\perp$ is the zero section. Hence the subvariety $\tlcamo$ is symplectic in $(\camo, \Omega)$, and $\omega$ is a symplectic form on $\tlcamo$. In particular, the Poisson bracket on $(\tlcamo, \omega)$ is the restriction of the Poisson bracket on $(\camo, \Omega)$. 

Furthermore, the subvariety $\tlcamo$ is homotopy equivalent to the Calogero--Moser space $\camo$, by the following deformation retract from $\camo$ to $\tlcamo$:
\begin{align*}
    (X, Y) \mapsto \left(X - s \frac{\tr X}{n} I_n, Y - s \frac{\tr Y}{n} I_n \right), \quad s \in [0,1] 
\end{align*}
which induces isomorphisms on the level of cohomology groups. In particular, the first holomorphic de Rham cohomology group of $\tlcamo$ vanishes as it does for $\camo$, and every holomorphic symplectic vector field on $\tlcamo$ is  Hamiltonian. 

\begin{theorem} 
    The traceless Calogero--Moser space $\tlcamo$ endowed with the holomorphic symplectic form $\omega$ has the symplectic density property. 
\end{theorem}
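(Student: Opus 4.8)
The plan is to reduce the symplectic density property for $\tlcamo$ to its Hamiltonian density property, which is harmless since $H^1_{\mathrm{dR}}(\tlcamo) = 0$ as established just before the statement. So the real task is to prove the Hamiltonian density property for $(\tlcamo, \omega)$, and the natural route is to transfer it from the known Hamiltonian density property of the full Calogero--Moser space $\camo$ (proved by the authors in \cite{CaloSymplo}) using the decomposition $\camo = \CC^2 \oplus \tlcamo$ and the direct-product machinery of Section \ref{sec-product}.

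First I would make the decomposition $\camo = \CC^2 \oplus \tlcamo$ genuinely symplectic: one checks that $\Omega$ restricted to the $\CC^2$-factor (with coordinates $\tr X, \tr Y$) is a multiple of the standard form $d(\tr X)\wedge d(\tr Y)$, and that this factor is $\Omega$-orthogonal to $\tlcamo$, so that $\Omega = p_1^\ast \omega_{\CC^2} + p_2^\ast \omega$ in the sense of Section \ref{sec-product}. This is exactly the situation of Proposition \ref{prop:tensor} and Theorem \ref{thm:directprodHam}, but running in reverse: we know the algebraic Hamiltonian density property of the product $\camo$ and of the factor $\CC^2$, and we want it for the other factor $\tlcamo$. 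The key observation is that the Poisson algebra $\CC[\camo]$ decomposes as $\CC[\tr X, \tr Y] \otimes \CC[\tlcamo]$ with the Poisson bracket of Lemma \ref{lemma:poissonprod}: brackets of functions pulled back from one factor land in that factor, and cross-brackets vanish. Hence, if $F \in \CC[\tlcamo]$ is an arbitrary regular function, I can regard it as a function on $\camo$ independent of $\tr X, \tr Y$; by the Hamiltonian density property of $\camo$ it lies in the Lie algebra generated by complete Hamiltonian functions on $\camo$, and then I would project this Lie expression componentwise onto $\CC[\tlcamo]$ (using that the projection $\CC[\camo] \to \CC[\tlcamo]$ killing $\tr X, \tr Y$ is a Poisson algebra homomorphism onto the second factor) to express $F$ in terms of restrictions of complete Hamiltonian vector fields.

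The main obstacle is the completeness of the restricted vector fields: a complete Hamiltonian vector field on $\camo$ need not restrict to a complete one on $\tlcamo$, nor need its Hamiltonian restrict to the Hamiltonian of a complete field. The clean way around this is the same trick used in \cite{CaloSymplo}: the complete Hamiltonian vector fields on $\camo$ used to generate everything are explicit — they come from one-parameter groups acting linearly on the matrix pairs $(X,Y)$, with polynomial Hamiltonians in $\tr(X^i Y^j)$ and the like. Each such field either preserves the splitting $\camo = \CC^2 \oplus \tlcamo$ (so it restricts to a complete field on $\tlcamo$ directly, and its Hamiltonian restricts to the Hamiltonian of that restriction), or it is the trivial extension of a complete field on the $\CC^2$-factor. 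Thus one should not take an arbitrary Lie expression and restrict, but rather re-run the generation argument from \cite{CaloSymplo} keeping track that all the generators in play descend to complete fields on $\tlcamo$; concretely, the shear-type and moment-map-type flows used there all commute with the deformation retract $(X,Y) \mapsto (X - s\tfrac{\tr X}{n}I_n, Y - s\tfrac{\tr Y}{n}I_n)$ and hence preserve $\tlcamo$.

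Finally I would assemble the pieces: the generating complete Hamiltonian functions on $\camo$, when restricted to $\tlcamo$ (equivalently, when the $\tr X, \tr Y$ variables are set appropriately or projected out), give complete Hamiltonian functions on $\tlcamo$ whose Lie algebra, by the argument above, is all of $\CC[\tlcamo]$; by Remark (2) after the Definition this yields the algebraic Hamiltonian density property, hence the Hamiltonian density property, of $\tlcamo$. Since $H^1_{\mathrm{dR}}(\tlcamo) = 0$, Remark (3) after the Definition (or Proposition \ref{prop: HDPtoSDP} with an empty index set $J$) upgrades this to the symplectic density property, completing the proof. The one point requiring care in the write-up is verifying that the explicit complete flows on $\camo$ used in \cite{CaloSymplo} genuinely preserve the subvariety $\tlcamo$ — I expect this to reduce to the remark, already made in the excerpt, that the two $\CC_+$-actions $(X,Y)\mapsto(X+tI_n,Y)$ and $(X,Y)\mapsto(X,Y+tI_n)$ commute with the $\gl_n$-action and with the other generators.
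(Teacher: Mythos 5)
Your reduction of the symplectic density property to the Hamiltonian density property via $H^1_{\mathrm{dR}}(\tlcamo)=0$ is exactly the paper's first step, but the route you propose for the Hamiltonian density property itself has a genuine gap. The product machinery of Section \ref{sec-product} only passes from the factors to the product, and your attempt to run it in reverse rests on a map that does not exist: the projection $\CC[\camo]\cong\CC[\tr X,\tr Y]\otimes\CC[\tlcamo]\to\CC[\tlcamo]$ obtained by killing $\tr X,\tr Y$ is \emph{not} a Poisson algebra homomorphism. For instance, with $h\in\CC[\tlcamo]$ one has $\{\tr X\cdot h,\tr Y\}=n\,h\neq 0$, while $\tr X\cdot h$ projects to $0$; restriction to the symplectic subvariety $\{\tr X=\tr Y=0\}$ does not preserve brackets. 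So you cannot take an arbitrary Lie combination of complete Hamiltonians on $\camo$ and push it down to $\tlcamo$. Your fallback claim, that the generators used in \cite{CaloSymplo} either live on one factor or preserve the splitting, also cannot hold as stated: functions pulled back from the two factors have vanishing cross-brackets (Lemma \ref{lemma:poissonprod}), so such a generating set could only produce $\CC[\CC^2]+\CC[\tlcamo]$ and never mixed functions; the generating set for $\camo$ necessarily contains mixed Hamiltonians, and the relevant flows need not preserve $\tlcamo$ (e.g.\ the flow of $\tr X^3$, namely $(X,Y)\mapsto(X,Y-3tX^2)$, moves $\tr Y$ away from $0$).

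What remains of your plan --- ``re-run the generation argument with generators that descend to $\tlcamo$'' --- is indeed what the paper does, but it is the substantive core of the proof rather than bookkeeping, and it is missing from your proposal. One must pass to the traceless generators $\mathcal{F}=\{\tr A^2,\tr B^2,\tr A^3,(\tr AB)^2\}$ (complete by Lemmas \ref{lemma:completeVFs} and \ref{lem:complete-OneVariable}, with flows preserving $\tlcamo$ by Lemma \ref{poissonCommute}), and then the bracket calculus changes: on $\rabove$ one has $\{\tr A^jB^k,\tr A^pB^q\}\thicksim(jq-kp)\tr A^{j+p-1}B^{k+q-1}-\tfrac1n\bigl(jq\,\tr A^{j-1}B^k\cdot\tr A^pB^{q-1}-kp\,\tr A^jB^{k-1}\cdot\tr A^{p-1}B^q\bigr)$ modulo lower degree (Lemma \ref{lem:bracketformular}), so every bracket produces extra product terms with a factor $\tfrac1n$ that are absent from the $\camo$-computation of \cite{CaloSymplo}. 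Absorbing these terms is exactly what the new multi-level inductions of Lemmas \ref{lemma: trAj&trBk}--\ref{lemma: nfactors} (including the lexicographic induction inside Lemma \ref{lemma: trXiatrXpbYqb}) accomplish, culminating in Theorem \ref{theorem: HDP}; a ``verbatim'' transfer from $\camo$ does not produce this. So your high-level frame (Hamiltonian density property plus vanishing $H^1_{\mathrm{dR}}$, cf.\ Proposition \ref{prop: HDPtoSDP}) agrees with the paper, but the actual proof of the Hamiltonian density property of $\tlcamo$ is not supplied by your argument.
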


The proof will be presented in Section \ref{proof-HDP}. 

We can then also recover the main result of \cite{CaloSymplo}:
\begin{corollary}
    The Calogero--Moser space $\camo$ has the symplectic density property.
\end{corollary}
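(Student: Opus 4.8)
The plan is to deduce the symplectic density property for $\camo$ from the just-established symplectic density property of $\tlcamo$ together with the product decomposition $\camo = \CC^2 \oplus \tlcamo$ recorded above. Concretely, I would first observe that this decomposition is symplectic: the symplectic form $\Omega$ on $\camo$ restricts to $\omega$ on $\tlcamo$, and on the $\CC^2$-factor it restricts to a nondegenerate form since $\{\tr X, \tr Y\} = n \neq 0$. Hence, up to rescaling one of the coordinates $\tr X, \tr Y$ by the constant $n$, we have an isomorphism of holomorphic symplectic manifolds $(\camo, \Omega) \cong (\CC^2, dz \wedge dw) \times (\tlcamo, \omega)$, with $\Omega = p_1^\ast(dz\wedge dw) + p_2^\ast \omega$ in the notation of Section \ref{sec-product}.

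Next I would invoke the general product machinery. We already know that $(\CC^2, dz \wedge dw)$ has the algebraic — hence also the holomorphic — Hamiltonian density property (the lemma in Section \ref{sec-applications}), and that $\tlcamo$ has the symplectic density property, which coincides with the Hamiltonian density property there because $H^1_{\mathrm{dR}}(\tlcamo) = 0$, as noted just before the theorem. Therefore both factors enjoy the Hamiltonian density property, and Theorem \ref{thm:directprodHamStein} gives the Hamiltonian density property for the product $\camo$. (If one prefers to stay in the algebraic category, Theorem \ref{thm:directprodHam} applies verbatim, since both factors are smooth affine and the relevant Hamiltonian density properties are algebraic.)

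Finally, to upgrade from the Hamiltonian to the symplectic density property on $\camo$, I would use Proposition \ref{prop: HDPtoSDP} — but in fact the hypothesis there is vacuously satisfied. The Calogero--Moser space $\camo$ is known to have $H^1_{\mathrm{dR}}(\camo) = 0$ (this is recalled in the paragraph above the theorem, via the deformation retract onto $\tlcamo$ and the vanishing for $\tlcamo$), so by item (3) of the Remark following the Definition, the Hamiltonian and symplectic density properties coincide on $\camo$. Thus the Hamiltonian density property obtained in the previous step already is the symplectic density property, and the corollary follows.

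I do not expect a serious obstacle here; the only point requiring a little care is the verification that the decomposition $\camo = \CC^2 \oplus \tlcamo$ is genuinely symplectic with the product form — i.e.\ that $\Omega$ splits as a sum of the pullbacks from the two factors — rather than merely a direct sum as varieties. This is exactly the content of the computation $\{\tr X, \tr Y\} = n$ and the observation that the $\CC^2_+$-actions generated by $\tr X$ and $\tr Y$ commute with the $\gl_n$-action, both of which are already established in the text; so the decomposition descends to the quotient and respects the Poisson structure, giving the product symplectic form after the harmless rescaling. Everything else is a direct citation of results proved earlier in the paper.
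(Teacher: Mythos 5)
Your proposal is correct and follows essentially the same route as the paper: the paper's own proof simply writes $\camo \cong \tlcamo \times T^\ast\CC$, invokes the direct product theorem for the Hamiltonian density property, and uses the vanishing of the first holomorphic de Rham cohomology of all spaces involved to identify it with the symplectic density property. Your additional verification that the splitting $\camo = \CC^2 \oplus \tlcamo$ is symplectic (up to the harmless rescaling coming from $\{\tr X, \tr Y\} = n$) is a detail the paper leaves implicit, but it is the same argument.
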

\begin{proof}
    We have that $\camo \cong \tlcamo \times T^\ast \CC$. All spaces involved have trivial first holomorphic de Rham cohomology group.
\end{proof}

\begin{theorem}
    The identity component of the group of holomorphic symplectic automorphisms of $\tlcamo, n\ge 2$, is the closure (in the topology of uniform convergence on compacts) of the group generated by the following four families ($t \in \CC$) of holomorphic symplectic automorphisms
    \begin{align*}
        &(A, B) \mapsto (A, B - 2 t A) \\
        &(A, B) \mapsto (A + 2 t B, B) \\
        &(A, B) \mapsto (A, B - 3 t A^2 + 3 t \frac{(\tr A)^2}{n} I_n) \\
        &(A, B) \mapsto ( A \exp(2t \tr AB), B \exp(-2t \tr AB) ).
    \end{align*}
\end{theorem}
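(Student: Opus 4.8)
The plan is to combine the symplectic density property of $\tlcamo$ (the preceding theorem) with a standard result on the structure of automorphism groups of Stein manifolds with the symplectic/density property, namely that the identity component of $\aut_\omega(\tlcamo)$ is the closure of the group generated by the flows of the complete holomorphic symplectic vector fields (this is the symplectic analogue of the Andersén--Lempert theory; it is used implicitly in \cite{CaloSymplo} and goes back to the general Andersén--Lempert--Forstneri\v{c}--Rosay machinery). Given this, it suffices to exhibit the four listed families as the flows of complete symplectic vector fields, and then to show that \emph{every} complete symplectic (equivalently, since $H^1_{\mathrm{dR}}(\tlcamo)=0$, Hamiltonian) vector field needed to generate $\mathrm{HVF}_\omega(\tlcamo)$ already lies in the Lie algebra generated by these four fields — in other words, that the four given Hamiltonians generate a dense Lie subalgebra of $\holo(\tlcamo)/\CC$.

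First I would identify the four flows with complete Hamiltonian vector fields on $\tlcamo$ and compute their Hamiltonian functions with respect to $\omega$. The first two families are the (traceless) shears $(A,B)\mapsto(A,B-2tA)$ and $(A,B)\mapsto(A+2tB,B)$ descending from the $\gl_n$-invariant shears on $\rabove$; their Hamiltonians are, up to normalization, $\tr(A^2)$ and $\tr(B^2)$. The third family is the cubic shear with Hamiltonian proportional to $\tr(A^3)$ (the correction term $3t\frac{(\tr A)^2}{n}I_n$ is exactly what is needed to land back in $\mathfrak{sl}_n$ after cubing, and it does not affect the class in $\tlcamo$ where $\tr A$ is not a coordinate — indeed on $\rabove$ one has $\tr A=0$, so this term vanishes and the map is literally $(A,B)\mapsto(A,B-3tA^2)$; I should double-check the bookkeeping here). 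The fourth family, $(A,B)\mapsto(A\exp(2t\tr AB),B\exp(-2t\tr AB))$, is the flow of the Hamiltonian vector field of $f=(\tr AB)^2$ (note $\tr AB$ is a first integral of this flow, so the vector field is a genuine shear-type field multiplied by a function in its kernel, hence complete; this matches Remark \ref{lemma:shear}). So the four Hamiltonians are, essentially, $\tr A^2,\ \tr B^2,\ \tr A^3,\ (\tr AB)^2$.

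Next I would invoke the proof of the preceding theorem (Section \ref{proof-HDP}): there one shows that $\tlcamo$ has the Hamiltonian density property, and the computation there should already produce a dense Lie subalgebra of $\holo(\tlcamo)/\CC$ from a finite list of explicit complete Hamiltonians, with $\tr A^2,\ \tr B^2,\ \tr A^3$ (and possibly $\tr AB$ or $(\tr AB)^2$) among the basic building blocks. The key point to verify is that the \emph{specific} four functions listed here already generate that same dense Lie subalgebra — i.e.\ that one can recover from iterated Poisson brackets of $\tr A^2,\tr B^2,\tr A^3,(\tr AB)^2$ all the generators used in the density proof. For instance $\{\tr A^2,\tr B^2\}$ is a multiple of $\tr AB$, brackets of $\tr A^3$ with $\tr B^2$ produce $\tr(A^2B)$, and so on, so that one climbs up to all $\tr(\text{word in }A,B)$; one also uses the fourth Hamiltonian $(\tr AB)^2$ to access directions the first three miss. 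Since $H^1_{\mathrm{dR}}(\tlcamo)=0$ this dense Lie subalgebra of $\holo(\tlcamo)/\CC$ corresponds to a dense Lie subalgebra of $\mathrm{SVF}_\omega(\tlcamo)=\mathrm{HVF}_\omega(\tlcamo)$, and the Andersén--Lempert-type theorem then yields that the closure of the group generated by the four flows is the identity component of $\aut_\omega(\tlcamo)$.

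The main obstacle I anticipate is the bookkeeping in the last step: checking that these particular four Hamiltonians — rather than the (possibly larger or differently chosen) list used in Section \ref{proof-HDP} — still Lie-generate a dense subalgebra. This is a matter of reorganizing the Poisson-bracket computations from the density proof so that every needed generator is expressed through brackets of $\tr A^2,\ \tr B^2,\ \tr A^3,\ (\tr AB)^2$; the cubic generator $\tr A^3$ and the quartic $(\tr AB)^2$ are presumably included precisely because the quadratic shears alone do not suffice (they generate only an $\mathfrak{sp}$-type action), so the content is an explicit check that nothing else is required. A secondary technical point is confirming completeness and the exact Hamiltonian normalizations of the four explicitly written flows, which is routine once one writes the flow as the time-$t$ map of the corresponding vector field and uses that the relevant trace functions are first integrals.
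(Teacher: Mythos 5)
Your proposal matches the paper's argument: the paper identifies the four families as the flows of the complete Hamiltonian vector fields with Hamiltonians $\tr A^2$, $\tr B^2$, $\tr A^3$, $(\tr AB)^2$, notes (Theorem \ref{theorem: HDP}) that exactly these four functions Lie-generate all invariant polynomials on $\rabove$, and concludes via the Anders\'en--Lempert theorem (\cite{CaloSymplo}*{Theorem 12}). The ``main obstacle'' you anticipate is vacuous, since the density proof in Section \ref{proof-HDP} is run precisely with the generating set $\mathcal{F}=\{\tr A^2,\tr B^2,\tr A^3,(\tr AB)^2\}$, so no reorganization of the Poisson-bracket computations is needed.
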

\begin{proof}
These four families are the flows corresponding to the Hamiltonian functions $\tr A^2, \tr B^2, \tr A^3$, and $(\tr AB)^2$, respectively, which generate the Poisson--Lie algebra on $\tlcamo$. By the Anders{\'e}n--Lempert theorem, see \cite{CaloSymplo}*{Theorem 12}, the flows of the generators of the Lie algebra generate a dense subgroup of the identity component of the automorphism group. 
\end{proof}

Another application of the symplectic density property of the traceless Calogero--Moser space is a holomorphic approximation for symplectic diffeomorphisms as in \cite{CaloCarleman} by the second author. For this, we choose a totally real submanifold of  $\tlcamo$ by taking the following antiholomorphic involution 
\begin{equation}
\label{traceless-involution}
	\tau (A, B) = ( A^*,  B^* )
\end{equation}
whose fixed-point set is
\begin{align*}
    \tlcamo^\tau = \{ [(A, B)] \in \tlcamo : A^* = A, B^* = B \}.
\end{align*}
Denote by $j$ the inclusion map $\tlcamo^\tau \hookrightarrow \tlcamo$. 
Pulling-back the holomorphic symplectic form $\omega$ on $\tlcamo$ by $j$ gives a real symplectic form $\omega_\RR:= j^* \omega$ on the real part $\tlcamo^\tau$. 

\begin{theorem}
    Let $k \in \NN$ and $\varphi \in \diff_{\omega_\RR}(\tlcamo^\tau)$ be a symplectic diffeomorphism which is smoothly isotopic to the identity. Then for any positive continuous function $\epsilon$ on $\tlcamo^\tau$, there exists a holomorphic symplectic automorphism $\Phi \in \aut_{\omega}(\tlcamo)$ such that 
	\[	
            \Phi(\tlcamo^\tau) = \tlcamo^\tau \quad \text{ and } \quad \| \Phi - \varphi \|_{C^k(p)} < \epsilon (p)	\text{ for all } p \in \tlcamo^\tau, 
        \]
    where $\| \cdot \|_{C^k(p)}$ is a pointwise $C^k$-seminorm on $\tlcamo^\tau$. 
\end{theorem}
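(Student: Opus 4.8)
The plan is to deduce this Carleman-type approximation statement from the symplectic density property of $\tlcamo$ (established in the preceding theorem) by following the general machinery of \cite{CaloCarleman}. The key structural input is that $(\tlcamo, \omega)$ is a Stein manifold equipped with a holomorphic symplectic form, that it carries the antiholomorphic involution $\tau$ of \eqref{traceless-involution} whose fixed-point set $\tlcamo^\tau$ is a totally real submanifold of half dimension, and that the symplectic density property holds. First I would verify that $\tlcamo^\tau$ is indeed a (Lagrangian-type) totally real, maximally real submanifold: this follows since $\tau$ is antiholomorphic and $\tau^\ast \omega = \overline{\omega}$ (because $\tau(A,B)=(A^\ast,B^\ast)$ is built from the Hermitian adjoint, which is compatible with the trace form used to define $\Omega$), so that the real part $\tlcamo^\tau$ is totally real and $\omega_\RR = j^\ast\omega$ is a genuine real symplectic form on it.

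The second step is to set up an exhaustion and a density-to-approximation passage exactly as in the Andersén--Lempert theory adapted to the symplectic category. The symplectic density property gives that the Lie algebra generated by complete holomorphic symplectic (equivalently, since $H^1_{\mathrm{dR}}(\tlcamo)=0$, Hamiltonian) vector fields is dense in $\mathrm{SVF}_\omega(\tlcamo)$; by the symplectic Andersén--Lempert theorem cited as \cite{CaloSymplo}*{Theorem 12}, compositions of flows of complete Hamiltonian fields are dense, in the compact-open topology, in the identity component $\aut_\omega(\tlcamo)_0$. The point of \cite{CaloCarleman} is to upgrade this to a Carleman (i.e.\ $C^k$ on all of the unbounded totally real manifold, with a prescribed variable error $\epsilon$) approximation of a prescribed symplectic diffeomorphism $\varphi$ of $\tlcamo^\tau$ that is smoothly isotopic to the identity. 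I would invoke that machinery essentially verbatim: one first approximates $\varphi$ on a large compact piece by a holomorphic symplectic automorphism of $\tlcamo$ preserving $\tlcamo^\tau$ (using that $\varphi$, being isotopic to $\id$, can be written as a composition of time-one maps of compactly-supported real Hamiltonian isotopies, each of which is approximated by holomorphic Hamiltonian flows whose Hamiltonians are chosen $\tau$-real so that the resulting automorphisms commute with $\tau$ and hence preserve the fixed-point set), and then one iterates over an exhaustion $K_1 \subset K_2 \subset \cdots$ of $\tlcamo^\tau$, at each stage correcting on the new compact annulus while not disturbing too much the previous approximation, so that the errors telescope below $\epsilon$.

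Concretely the steps, in order, are: (1) record that $\tlcamo$ is Stein, $\omega$ holomorphic symplectic, $H^1_{\mathrm{dR}}=0$, and $\tlcamo^\tau$ totally real of maximal dimension with $\tau^\ast\omega=\bar\omega$; (2) note that $\tlcamo$ has the symplectic density property (previous theorem) and hence the symplectic Andersén--Lempert property of \cite{CaloSymplo}; (3) observe that the complete Hamiltonian generators $\tr A^2,\tr B^2,\tr A^3,(\tr AB)^2$ are $\tau$-real, so their flows commute with $\tau$ and preserve $\tlcamo^\tau$ — this is what makes the approximating automorphisms respect the real form; (4) quote the Carleman approximation theorem of \cite{CaloCarleman}, which takes as hypotheses exactly (1)--(3) together with "$\varphi$ smoothly isotopic to the identity through symplectomorphisms," and produces $\Phi\in\aut_\omega(\tlcamo)$ with $\Phi(\tlcamo^\tau)=\tlcamo^\tau$ and $\|\Phi-\varphi\|_{C^k(p)}<\epsilon(p)$ pointwise; (5) conclude. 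The main obstacle — and the only place real work is needed beyond citation — is step (3) together with checking that the hypotheses of \cite{CaloCarleman} are met on the nose for $\tlcamo$: namely that one can approximate a $\tau$-equivariant local Hamiltonian isotopy by $\tau$-equivariant global ones staying arbitrarily close on a compact set, which requires that the dense Lie-algebra combinations from the symplectic density property can be taken with $\tau$-real Hamiltonian functions. This is handled by averaging: given any Hamiltonian $h$, the pair $h, \overline{h\circ\tau}$ has a $\tau$-real combination, and since the symplectic density property statement is $\tau$-symmetric (the generators listed are already $\tau$-real), the approximation can be performed within the $\tau$-real subalgebra, which is precisely the content needed to feed into the Carleman scheme of \cite{CaloCarleman}.
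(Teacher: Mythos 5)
Your proposal is correct and follows essentially the same route as the paper: the paper's proof consists precisely of noting that the $\tau$-real Hamiltonians $\tr A^2$, $\tr B^2$, $\tr A^3$, $(\tr AB)^2$ generate the whole Poisson--Lie algebra $\CC[\tlcamo]$ (Theorem \ref{theorem: HDP}) and then applying the Carleman scheme of \cite{CaloCarleman} verbatim, which is exactly your steps (1)--(5). Your additional verifications ($\tau^\ast\omega=\bar\omega$, totally real fixed-point set, $\tau$-reality of the generators so that their flows preserve $\tlcamo^\tau$) are the implicit content of that "verbatim application."
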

\begin{proof}
    Later in Section \ref{proof-HDP}, we will see that the Lie algebra generated by the real holomorphic functions $\tr A^2, \tr B^2, \tr A^3$, and $(\tr AB)^2$ coincides with $\CC[\tlcamo]$.  Then the claim follows from a verbatim application of the proof for the Calogero--Moser space $\camo$ in \cite{CaloCarleman}. 
\end{proof}

\section{Poisson brackets for $\tlcamo$}

The following is mentioned in \cite{CM4}; it is an improvement in the number of $\CC$-algebra generators over those in Etingof and Ginzburg \cite{MR1881922}*{Section 11, p.~322, Remark (ii)} and in \cite{CaloSymplo}. 
\begin{lemma} \label{lem:finiRingGen}
    The coordinate ring $\CC [\camo] \cong \C [\cabove]^{\gl_n(\C)}$ can be generated by 
    \[
        \mathcal{G} = \{ \tr X^j Y^k : 0 \le j, k \le n \}
    \]
\end{lemma}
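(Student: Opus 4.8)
\emph{Proof proposal.} Since $\CC[\camo]\cong\CC[\cabove]^{\gl_n(\C)}$ is the definition of the GIT quotient, the task is to show that $\mathcal G$ generates the invariant ring $\CC[\cabove]^{\gl_n(\C)}$. The plan is to combine two inputs. The first is a \emph{qualitative} normal-form statement: every invariant is a polynomial in the ordered traces $\tr(X^jY^k)$ with $j,k\ge 0$. The second is a \emph{quantitative} reduction of those exponents down to the range $0\le j,k\le n$ by means of the Cayley--Hamilton theorem. For the first input I would quote the literature. Observe first that $\cabove$ is a closed $\gl_n(\C)$-stable subvariety of $\matnn\times\matnn$: the matrix $[X,Y]-iI_n$ never vanishes there, since its trace equals $-ni\ne 0$, so $\cabove=\{(X,Y):\rank([X,Y]-iI_n)\le 1\}$ is cut out by the vanishing of all $2\times 2$ minors. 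By reductivity of $\gl_n(\C)$, invariants on $\cabove$ extend to $\matnn\times\matnn$, where the first fundamental theorem for the conjugation action on tuples of matrices (Procesi) expresses them as polynomials in traces of words in $X$ and $Y$. Straightening such words by repeatedly applying $YX=XY-iI_n-\rho$ with $\rho:=[X,Y]-iI_n$, and using crucially that $\rho$ has rank one — so that $\rho=vw^\top$ and any trace containing two or more factors of $\rho$ splits as a product of traces each containing a single factor $\tr(Uvw^\top)=w^\top Uv$ — is the content of the computation of Etingof--Ginzburg \cite{MR1881922}, which yields generation by $\{\tr(X^jY^k):j,k\ge 0\}$; see also Wilson \cite{MR1626461} and \cite{CM4}.

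It then remains to carry out the second input. For $j\ge 0$ the Cayley--Hamilton theorem, applied over the coordinate ring, expresses $X^j$ as $\sum_{a=0}^{n-1}\gamma_{j,a}X^a$, where the $\gamma_{j,a}$ are polynomials in the coefficients of the characteristic polynomial of $X$ and hence, by Newton's identities, polynomials with rational coefficients in the power sums $\tr X,\tr X^2,\dots,\tr X^n$. Each $\tr X^m=\tr(X^mY^0)$ for $1\le m\le n$ lies in $\mathcal G$ — this is exactly why the bound defining $\mathcal G$ must be $n$ rather than $n-1$: the top power sum $\tr X^n$ has to be available to form the Cayley--Hamilton coefficients. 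Consequently each $\gamma_{j,a}$ lies in the subalgebra $R\subseteq\CC[\camo]$ generated by $\mathcal G$, and symmetrically $Y^k=\sum_{b=0}^{n-1}\delta_{k,b}Y^b$ with $\delta_{k,b}\in R$ built from $\tr Y,\dots,\tr Y^n$, each of which is of the form $\tr(X^0Y^m)\in\mathcal G$. Taking traces of $X^jY^k=\bigl(\sum_{a<n}\gamma_{j,a}X^a\bigr)\bigl(\sum_{b<n}\delta_{k,b}Y^b\bigr)$ gives
\[
\tr(X^jY^k)=\sum_{a=0}^{n-1}\sum_{b=0}^{n-1}\gamma_{j,a}\,\delta_{k,b}\,\tr(X^aY^b)\in R,
\]
since every $\tr(X^aY^b)$ with $0\le a,b\le n-1$ belongs to $\mathcal G$. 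Together with the first input this shows $\CC[\camo]=R$, as desired.

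The step I expect to be the genuine obstacle is the first input — the normal-form statement reducing traces of arbitrary words to the ordered $\tr(X^jY^k)$ — since this is where the defining rank-one condition of $\cabove$ must be exploited, and keeping the $\rho$-terms (in particular those with a single factor of $\rho$, which do not obviously shorten the word) under control in the straightening requires a careful induction. If one did not wish to cite Etingof--Ginzburg for it, redoing this straightening would be the bulk of the work; by contrast the Cayley--Hamilton reduction of the exponents is essentially formal, its only subtlety being the observation that the power sums, and hence the generators $\tr X^n$ and $\tr Y^n$, up to degree $n$ are precisely what is needed.
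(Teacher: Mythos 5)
Your proposal is correct and follows essentially the same route as the paper: take as known (via the rank-one condition, i.e.\ the Etingof--Ginzburg straightening) that the invariant ring is generated by ordered traces $\tr X^jY^k$, and then use Cayley--Hamilton together with Newton's identities to bring the exponents into the range $0\le j,k\le n$. The only cosmetic difference is that the paper starts from the bounded set $\{\tr X^jY^k : j+k\le n^2\}$ and reduces one exponent at a time by induction, whereas you expand $X^j$ and $Y^k$ simultaneously into powers of degree at most $n-1$ with coefficients built from $\tr X^m,\tr Y^m$, $m\le n$; both are the same Cayley--Hamilton argument.
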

\begin{proof}
    By virtue of the rank condition $\rank([X,Y] - i I_n) = 1$, it is known that $\C [\camo]$ can be generated by $\{ \tr X^j Y^k :  j + k \le n^2 \}$. By the Cayley--Hamilton theorem, $\tr X^j Y^k$ with $j>n$ or $k > n$ is in the $\C$-algebra generated by $\mathcal{G}$: 
    
    For $X$ the following identity holds:
    \begin{align} \label{CaleyHam}
        X^n - \sigma_{n-1}(X) X^{n-1} + \cdots + (-1)^{n-1} \sigma_1(X) X + (-1)^n \sigma_0(X)  = 0
    \end{align}
    Multiplying \eqref{CaleyHam} with $XY^k, k \le n$ and taking the trace shows that $\tr X^{n+1}Y^k$ is in $\langle \mathcal{G} \rangle$. Suppose $\tr X^{n+j}Y^k$ is in $\langle \mathcal{G} \rangle$ for any $j \le l$, then multiplying \eqref{CaleyHam} with $X^{l+1}Y^k$ and taking the trace yields $\tr X^{n+l+1}Y^k \in \langle \mathcal{G} \rangle$. 
\end{proof}

Since $\tlcamo \hookrightarrow \camo$ is a $\gl_n(\CC)$-invariant subvariety, the restrictions of generators of $\CC[\camo]$ generate the ring of regular functions $\CC[\tlcamo]$ as $\CC$-algebra. Hence by Lemma \ref{lem:finiRingGen} 
\begin{align} 
    \CC[\tlcamo] = \langle \jmath^\ast \mathcal{G} \rangle, \quad \jmath^\ast \mathcal{G} = \{ \tr A^j B^k : 0 \le j, k \le n \}
\end{align}

Since $\omega = \jmath^\ast \Omega$ and restricting commutes with Poisson bracketing, we can work with the $\Omega$-Poisson bracket between invariant polynomials of the form $\tr A^j B^k$ on $\camo$. 

\begin{definition}[\cite{CaloSymplo}] \label{def: degrees}
A \emph{matrix monomial} is a map $M \colon \matnn \times \matnn \to \matnn$ of the form 
\[
(X,Y) \mapsto X^{p_1} Y^{q_1} X^{p_2} Y^{q_2} \cdots X^{p_m} Y^{q_m}
\] with $p_1, \dots, p_m, q_1, \dots, q_m \in \N_0$.

The \emph{bidegree} $(i,j)$ of $M$ is given by 
\[
(i,j) = (p_1 + p_2 + \dots + p_m, q_1 + q_2 + \dots + q_m)
.\]
The \emph{degree} $\deg M$ is defined as $\deg M = i + j$.
If $M = 0$, then $\deg M := -\infty$. 

For a polynomial function in the traces of matrix monomials, its degree is the total degree of the polynomial in the entries of $X$ and $Y$.
\end{definition}

We use the notation 
\begin{align} \label{notation-sim}
    f \thicksim g \text{ mod } d
\end{align}
if $f-g$ on $\cabove$ agrees with a polynomial function in traces of matrix monomial of degree $\le d$.

Given two Hamiltonian functions $F$ and $G$ on $\matnn \times \matnn$, their Poisson bracket associated with the symplectic form $\widehat{\Omega} = dX \wedge dY = \sum_{j,k} dX_{jk} \wedge dY_{kj}$ is
\begin{align} \label{poissonbraket}
    \{ F, G \} 
    =  \sum_{j,k = 1}^n \frac{\partial F}{\partial X_{jk}} \frac{\partial G}{\partial Y_{kj}} - \frac{\partial F}{\partial Y_{jk}} \frac{\partial G}{\partial X_{kj}}  
\end{align}

\begin{lemma} \label{lem:bracketformular}
    On $\cabove$ 
    \begin{align*}
        & \quad \{ \tr A^j B^k, \tr A^p B^q \}  \\
        &\thicksim (jq-kp) \tr A^{j+p-1} B^{k+q-1}   \\
        &\quad - \frac{1}{n} \left( jq \, \tr A^{j-1}B^k \cdot \tr A^p B^{q-1} - kp \, \tr A^j B^{k-1} \cdot \tr A^{p-1} B^q \right) \\
        &\quad \mod j+p+k+q-6
    \end{align*}
\end{lemma}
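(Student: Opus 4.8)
The plan is to compute $\{\tr A^jB^k,\tr A^pB^q\}$ on all of $\matnn\times\matnn$ using the explicit formula \eqref{poissonbraket}, and then to track the effect of passing to $\cabove$ via the relation $[X,Y]\sim iI_n$ modulo lower-degree terms. First I would record the basic derivative identities for traces of matrix monomials: for a matrix monomial $M$ of the form $X^{p_1}Y^{q_1}\cdots X^{p_m}Y^{q_m}$, the partial derivative $\partial(\tr M)/\partial X_{jk}$ is the $(k,j)$-entry of the sum $\sum X^{a}Y^{q_i}\cdots X^{p_m}Y^{q_m}X^{p_1}\cdots X^{b}$ over all ways of splitting each $X$-block, i.e. a cyclic sum of matrix monomials; and similarly for $Y$. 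For $F=\tr X^jY^k$ one gets
\[
\frac{\partial F}{\partial X_{jk}} = \Bigl(\sum_{a=0}^{j-1} X^{j-1-a}Y^kX^{a}\Bigr)_{kj}, \qquad
\frac{\partial F}{\partial Y_{jk}} = \Bigl(\sum_{b=0}^{k-1} Y^{k-1-b}X^jY^{b}\Bigr)_{kj}.
\]
Substituting these into \eqref{poissonbraket} and using $\sum_{j,k} P_{jk}Q_{kj} = \tr(PQ)$, the Poisson bracket becomes a double cyclic sum of traces of matrix monomials of the shape $\tr\bigl(X^{\ast}Y^{\ast}X^{\ast}Y^{\ast}\bigr)$, each of bidegree $(j+p-1,\,k+q-1)$.

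Next I would reduce each summand to the "model" monomial $\tr X^{j+p-1}Y^{k+q-1}$ modulo lower degree. The key move is the commutator estimate: on $\cabove$ one has $XY = YX + iI_n$, so any two adjacent factors $X$ and $Y$ (in either order) can be swapped at the cost of a term of strictly smaller degree. Iterating, every matrix monomial of bidegree $(i,j)$ equals $X^iY^j$ modulo traces of monomials of degree $\le i+j-2$ (each swap drops the degree by $2$, since it removes one $X$ and one $Y$). Applying this to every term of the double sum collapses all of them to multiples of $\tr X^{j+p-1}Y^{k+q-1}$; a careful bookkeeping of how many terms of each sign survive yields the coefficient $jq-kp$ — this is the same counting that appears for the Calogero--Moser space in \cite{CaloSymplo}, so I would cite the analogous computation there and only highlight the differences. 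The point is that the leading $\mathfrak{gl}_n$ (rather than $\mathfrak{sl}_n$) computation produces exactly $(jq-kp)\tr X^{j+p-1}Y^{k+q-1}$ at top degree.

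The genuinely new ingredient — and the step I expect to be the main obstacle — is the correction term $-\frac1n\bigl(jq\,\tr A^{j-1}B^k\cdot\tr A^pB^{q-1} - kp\,\tr A^jB^{k-1}\cdot\tr A^{p-1}B^q\bigr)$, which arises because we are restricting to $\mathfrak{sl}_n$: the matrices $A,B$ are the traceless parts $A = X - \frac{\tr X}{n}I_n$, $B = Y - \frac{\tr Y}{n}I_n$. So I would substitute $X = A + \frac{\tr X}{n}I_n$, $Y = B + \frac{\tr Y}{n}I_n$ into the $\mathfrak{gl}_n$ formula and expand. Because $\tr X$ and $\tr Y$ Poisson-commute with each other and, crucially, $\{\tr X, \tr A^pB^q\}$-type brackets vanish to the relevant order, the only surviving cross-terms are those in which exactly one factor of $I_n$ is produced by differentiating a single $X$ (resp. $Y$) and the trace then splits as a product. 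Tracking which block the $I_n$ lands in, and using $\tr(I_n)=n$, produces precisely the $-\frac1n(\dots)$ expression, with the two signs coming from the two halves of \eqref{poissonbraket}. The factors $jq$ and $kp$ count the number of $(X,Y)$ and $(Y,X)$ adjacencies that contribute. I would then check the degree bookkeeping: the model term has degree $j+p+k+q-2$, the correction term has degree $j+p+k+q-4$, and everything discarded along the way has degree $\le j+p+k+q-6$, which is exactly the claimed error exponent.

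Finally, since $\omega = \jmath^\ast\Omega$ and restriction commutes with Poisson bracketing (as noted in the text), the identity proved on $\cabove$ for traces of the matrix monomials $X^jY^k$ descends verbatim to the asserted identity for $\tr A^jB^k$ on $\cabove$, with $A,B$ in place of $X,Y$; no further work is needed to pass to $\tlcamo$.
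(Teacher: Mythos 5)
The central gap is your derivation of the $-\tfrac1n(\cdots)$ correction term. You first compute the $\mathfrak{gl}_n$ bracket $\{\tr X^jY^k,\tr X^pY^q\}$ and then substitute $X=A+\tfrac{\tr X}{n}I_n$, $Y=B+\tfrac{\tr Y}{n}I_n$, discarding cross-terms on the grounds that ``$\tr X$ and $\tr Y$ Poisson-commute''. They do not: $\{\tr X,\tr Y\}=n$ (the paper uses precisely this to see that $\tlcamo$ is a symplectic subvariety), and these cross-terms contribute at an order you cannot ignore. Concretely, $\tr A^2=\tr X^2-\tfrac{(\tr X)^2}{n}$ and $\tr B^2=\tr Y^2-\tfrac{(\tr Y)^2}{n}$, and the term $\tfrac1{n^2}\{(\tr X)^2,(\tr Y)^2\}=\tfrac{4}{n}\tr X\tr Y$ is exactly what is needed to arrive at the correct value $\{\tr A^2,\tr B^2\}=4\tr AB$; with your simplification you would be off by $\tfrac4n\tr X\tr Y$, a top-degree error on $\cabove$, far outside the allowed $\mathrm{mod}\; j+p+k+q-6$. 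For the same reason your closing claim that the $X,Y$-identity ``descends verbatim with $A,B$ in place of $X,Y$'' cannot stand: the entire content of the lemma is the extra split-trace term, which is absent from the $\mathfrak{gl}_n$ bracket. The paper's proof avoids all of this by differentiating $\tr A^jB^k$ directly in the $X,Y$-coordinates, using $\partial A_{ab}/\partial X_{lm}=\delta_{la}\delta_{mb}-\tfrac1n\delta_{lm}\delta_{ab}$; the contraction \eqref{derivation-rule}, namely $\delta_{jq}\delta_{kp}-\tfrac1n\delta_{jk}\delta_{pq}$, produces the correction term exactly (the split factors are already $\tr A^{j-1}B^k$ and $\tr A^pB^{q-1}$ by cyclicity), with no brackets of $\tr X$ or $\tr Y$ ever entering. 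If you insist on the substitution route you must keep every cross-term, including $\{\tr X,\tr Y\}=n$ and $\{\tr X,\tr X^pY^q\}=q\,\tr X^pY^{q-1}$, and verify the cancellations explicitly.

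The reduction to the model trace is also misjustified. On $\cabove$ one does not have $XY=YX+iI_n$; the defining condition is only $\rank([X,Y]-iI_n)=1$, and $[X,Y]=iI_n$ is impossible since $\tr[X,Y]=0\neq in$. Moreover, even granting a swap identity, your own bookkeeping (each swap costs degree $2$) only yields the formula modulo degree $j+p+k+q-4$; at that tolerance the correction term could itself be absorbed into the error, and the weaker statement would not feed the later inductions, which require errors of degree $\le j+p+k+q-6$ so that the hypothesis \eqref{indhyp} applies. The correct tool, and the one the paper invokes, is Corollary 26 of \cite{CaloSymplo}: on $\cabove$ any trace of a matrix monomial of bidegree $(i,j)$ equals $\tr X^iY^j$ plus a polynomial in traces of degree $\le i+j-4$; you also need it for monomials in $A,B$, which is legitimate because $[A,B]=[X,Y]$. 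Cite that result (with this remark) rather than the swap heuristic; your asserted bound ``everything discarded has degree $\le j+p+k+q-6$'' is then correct, but it does not follow from the mechanism you describe.
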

\begin{proof}
To compute $\{ \tr A^j B^k, \tr A^p B^q \}$ we first consider
\begin{align*}
    \frac{\partial A_{jk}}{\partial X_{lm}} = \frac{\partial}{\partial X_{lm}} \left(X_{jk} - \frac{1}{n} (\tr X)  \delta_{jk} \right) 
    = \delta_{lj} \delta_{mk} - \frac{1}{n} \delta_{lm} \delta_{jk}
\end{align*}
and
\begin{align} \label{BpqYml}
    \frac{\partial B_{pq}}{\partial Y_{ml}} = \frac{\partial}{\partial Y_{ml}} \left(Y_{pq} - \frac{1}{n} (\tr Y)  \delta_{pq} \right) 
    = \delta_{mp} \delta_{lq}  - \frac{1}{n} \delta_{ml} \delta_{pq}
\end{align}
Combining them together gives
\begin{align} \label{derivation-rule}
    \sum_{l,m} \frac{\partial A_{jk}}{\partial X_{lm}} \frac{\partial B_{pq}}{\partial Y_{ml}} &= \delta_{jq} \delta_{kp} - \frac{1}{n} \delta_{jk} \delta_{pq} - \frac{1}{n} \delta_{pq} \delta_{jk} + \frac{1}{n^2} n \delta_{jk} \delta_{pq} \\ 
    &= \delta_{jq} \delta_{kp} - \frac{1}{n} \delta_{jk} \delta_{pq} \nonumber
\end{align}
Using this identity we compute
\begin{align*} 
    & \quad \{ \tr A^j B^k, \tr A^p B^q \}  \\
    & = \sum_{l,m} \frac{\partial}{\partial X_{lm}} (\tr A^j B^k) \frac{\partial}{\partial Y_{ml}}(\tr A^p B^q) - \frac{\partial}{\partial X_{lm}} (\tr A^p B^q) \frac{\partial}{\partial Y_{ml}}(\tr A^j B^k)  \\
    & = (jq-kp) \tr A^{j+p-1} B^{k+q-1} + ( \text{terms of degree} \le j+p+k+q-6) \\
    & \quad - \frac{1}{n} \left( jq \, \tr A^{j-1}B^k \cdot \tr A^p B^{q-1} - kp \, \tr A^{p-1} B^q \cdot \tr A^j B^{k-1} \right)
\end{align*} 
where in the last step we use the fact that on $\cabove$ any trace of a matrix monomial of bidegree $(i,j)$ can be written as $\tr X^iY^j$ plus a polynomial in traces of matrix monomials in $X$ and $Y$, whose degrees are bounded by $i+j-4$, see \cite{CaloSymplo}*{Corollary 26}. 
\end{proof}

For later reference, we collect the following special cases of Lemma \ref{lem:bracketformular}:
\begin{align}
    \{ \tr A^j, \tr A^k\} &= \{ \tr B^j, \tr B^k\} =0 \nonumber \\
    \{ \tr A^j, \tr B^q \}  &=  jq \, \tr A^{j-1} B^{q-1} - \frac{jq}{n} \tr A^{j-1} \cdot \tr B^{q-1} \label{AjBk} \\
    \{ \tr A^j , \tr A^p B^q \} &\thicksim jq \, \tr A^{j+p-1} B^{q-1} - \frac{jq}{n} \tr A^{j-1} \cdot \tr A^p B^{q-1}   \\
    & \quad   \mod j+p+q-6 \nonumber \\
    \{ \tr A^j B^k , \tr A B \} &= (j-k) \tr A^j B^k  \label{ABweight}\\
    \{ \tr A^j B^k , \tr A^2 \} &\thicksim - 2k \tr A^{j+1} B^{k-1} \mod j+k-4 \\
    \{ \tr A^j, \tr B^2 \} &= 2j \tr A^{j-1}B  \label{trAjB}
\end{align}
Note that the cases with equality follow from a direct application of \eqref{derivation-rule} and the cyclicity of the trace. 

We compute another set of brackets of the form $\{\tr X, \tr A^j B^k \}$. 

\begin{lemma} \label{poissonCommute}
    As functions on the Calogero--Moser space $\camo$, $\tr X$ and $\tr Y$ Poisson commute with $\tr A^j B^k$. In particular, the flow induced by $\tr A^j B^k$ preserves the subvariety 
    \[
        \tlcamo = \{ p \in \camo : \tr X(p) = \tr Y (p) = 0 \}. 
    \]
\end{lemma}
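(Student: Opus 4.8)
The plan is to identify the Hamiltonian flows of $\tr X$ and $\tr Y$ on $(\matnn \times \matnn, \widehat{\Omega})$ explicitly and to observe that these flows act trivially on the traceless components $A$ and $B$, hence on every polynomial $\tr A^j B^k$. As recalled in Section~\ref{sec-traceless}, the Hamiltonian vector fields of $\tr X$ and $\tr Y$ generate, up to a sign which will not matter, the shift flows
\begin{align*}
    \Phi_t \colon (X, Y) &\longmapsto (X, Y + t I_n), \\
    \Psi_t \colon (X, Y) &\longmapsto (X + t I_n, Y) ;
\end{align*}
I would first confirm this by a two-line computation from $i_V \widehat{\Omega} = d\,\tr X$, or simply quote the description of the two one-parameter unipotent groups given above.

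The next step is the invariance check. Since $A = X - \tfrac1n(\tr X) I_n$ does not involve $Y$, we have $A \circ \Phi_t = A$; and since $Y \mapsto Y + t I_n$ changes $\tr Y$ by $nt$, one computes $B \circ \Phi_t = (Y + t I_n) - \tfrac1n(\tr Y + nt) I_n = Y - \tfrac1n(\tr Y) I_n = B$. Hence $\tr A^j B^k \circ \Phi_t = \tr A^j B^k$ is constant in $t$, so its derivative at $t = 0$ along $\Phi_t$ vanishes; as the derivative of a function along a Hamiltonian flow equals, up to sign, its Poisson bracket with the flow's Hamiltonian, this gives $\{\tr X, \tr A^j B^k\} = 0$ on $\matnn \times \matnn$, hence on $\cabove$. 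Interchanging the roles of $X$ and $Y$ (so $A \leftrightarrow B$ and $\Phi_t \leftrightarrow \Psi_t$) yields $\{\tr Y, \tr A^j B^k\} = 0$ in the same way. As an alternative I would note that both vanishings also drop out of a direct computation with \eqref{poissonbraket} and \eqref{BpqYml}: one meets the factor $\sum_a \bigl(E_{aa} - \tfrac1n I_n\bigr) = 0$, which is precisely the statement that $A$ and $B$ are the traceless parts. Finally, since $\tr X$, $\tr Y$ and $\tr A^j B^k$ are all $\gl_n(\CC)$-invariant, these brackets computed upstairs descend to the corresponding brackets on $\camo$, by the same compatibility of symplectic reduction with Poisson bracketing that is used elsewhere in this section.

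For the ``in particular'' clause I would reverse the reasoning: writing $\theta_t$ for the Hamiltonian flow on $\camo$ generated by $\tr A^j B^k$, we get $\tfrac{d}{dt}(\tr X \circ \theta_t) = \pm\{\tr X, \tr A^j B^k\} \circ \theta_t = 0$ and likewise for $\tr Y$, so $\tr X$ and $\tr Y$ are first integrals of $\theta_t$; hence $\theta_t$ preserves their common zero set $\tlcamo = \{ p \in \camo : \tr X(p) = \tr Y(p) = 0\}$.

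I do not expect any genuine obstacle: the calculations are elementary, and the only points deserving attention are pinning down the sign convention in the Hamiltonian flows of $\tr X$ and $\tr Y$ (irrelevant to the conclusion) and explicitly invoking the already-used fact that the reduced Poisson bracket on $\camo$ is the restriction of the ambient bracket on $\gl_n(\CC)$-invariant functions.
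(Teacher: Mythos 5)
Your proof is correct, but it takes a different route from the paper. The paper proves the vanishing by a direct index computation: it plugs $\tr X$ and $\tr A^j B^k$ into the explicit bracket formula \eqref{poissonbraket}, uses \eqref{BpqYml}, and observes that the two Kronecker-delta contributions $\delta_{mp}\delta_{lq}$ and $\tfrac{1}{n}\delta_{ml}\delta_{pq}$ cancel after summation --- essentially the computation you sketch as your ``alternative''. Your primary argument is instead conceptual: the Hamiltonian flows of $\tr X$ and $\tr Y$ on $(\matnn\times\matnn,\widehat{\Omega})$ are the shifts by multiples of $I_n$ (already identified in Section \ref{sec-traceless}), these shifts fix the traceless parts $A$ and $B$, hence fix $\tr A^j B^k$, so the brackets vanish upstairs and descend to $\camo$ because all functions involved are $\gl_n(\CC)$-invariant and reduction is compatible with the Poisson bracket --- a fact the paper itself relies on when it computes $\Omega$-brackets of invariant polynomials via $\widehat{\Omega}$. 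Both arguments are short and complete; the paper's has the virtue of being uniform with the other bracket computations in that section, while yours makes the underlying mechanism transparent (the traceless parts are exactly the shift invariants) and hands you the ``in particular'' clause about preservation of $\tlcamo$ by the same first-integral reasoning, which the paper leaves implicit. The only points to keep explicit, as you note, are the (irrelevant) sign convention for the Hamiltonian flows of $\tr X,\tr Y$ and the invocation of the reduction-compatibility of the bracket when passing from $\matnn\times\matnn$ to $\camo$.
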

\begin{proof}
    We compute explicitly $\{\tr X, \tr A^j B^k \}$ using \eqref{BpqYml}:
    \begin{align*}
        & \quad \{\tr X, \tr A^j B^k \} \\
        &= \sum_{l,m} \frac{\partial}{\partial X_{lm}}(\tr X) \frac{\partial}{\partial Y_{ml}} (\tr A^j B^k)
        = k \sum_{l,m} \delta_{lm} \sum_{p,q} (A^j B^{k-1})_{qp} \cdot \frac{\partial B_{pq}} {\partial Y_{ml}} \\
        &= k \sum_{l,m} \delta_{lm} \sum_{p,q} (A^j B^{k-1})_{qp} \cdot (\delta_{mp} \delta_{lq}  - \frac{1}{n} \delta_{ml} \delta_{pq}) 
        = 0 .
    \end{align*}
    By symmetry $\{\tr Y, \tr A^j B^k \}$ also vanishes. 
\end{proof}

\section{Proof of Hamiltonian Density Property for $\tlcamo$} \label{proof-HDP}
The computational approach is similar to one for the Calogero--Moser space $\camo$ in \cite{CaloSymplo}. 

\begin{theorem} \label{theorem: HDP}
    The Lie algebra generated by the restrictions to $\rabove$ of the polynomials in $\mathcal{F} = \{  \tr A^2, \tr B^2, \tr A^3, (\tr AB)^2 \}$ is the entire Lie algebra of invariant polynomials on $\rabove$.
\end{theorem}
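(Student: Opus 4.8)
The plan is to generate enough elements of the Poisson--Lie algebra $\langle \mathcal{F} \rangle$ to recover all the ring generators $\jmath^\ast \mathcal{G} = \{ \tr A^j B^k : 0 \le j, k \le n\}$, and then to argue that once all of these lie in $\langle \mathcal{F} \rangle$, the whole coordinate ring $\CC[\rabove]$ does too. For the second half I would proceed exactly as in the analogous step for $\camo$ in \cite{CaloSymplo}: using the product formulas such as $2 f g = (f+g)^2 - f^2 - g^2$ together with the Poisson bracket identities of Lemma \ref{lem:bracketformular}, products of generators can be produced from brackets, so the Lie algebra generated by a ring-generating set is in fact the whole ring. Thus the crux is the first half: obtaining every $\tr A^j B^k$ from the four functions $\tr A^2, \tr B^2, \tr A^3, (\tr AB)^2$.

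For the first half I would argue by induction on the degree $d = j + k$, using the ``mod lower degree'' bracket formulas \eqref{AjBk}--\eqref{trAjB} and Lemma \ref{lem:bracketformular}. The base cases are built by hand: from $\tr A^2$ and $\tr B^2$ one gets $\tr AB$ via \eqref{AjBk} (with $j=q=1$, modulo constants and the product term $\tr A \cdot \tr B$, which vanishes on $\rabove$ since $A, B \in \mathfrak{sl}_n$); from $\tr A^3$ and $\tr B^2$ one gets $\tr A^2 B$ via \eqref{trAjB}; bracketing further with $\tr B^2$ raises the $B$-degree, and bracketing with $\tr A^2$ via the identity $\{\tr A^j B^k, \tr A^2\} \thicksim -2k\,\tr A^{j+1}B^{k-1}$ trades a $B$ for an $A$. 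The key structural point is that on $\rabove$ all the traces $\tr A^i$ with the product (lower-degree) correction terms behave well because $A$ and $B$ are traceless, which kills many of the $\tfrac1n$-terms appearing in Lemma \ref{lem:bracketformular}; and the function $(\tr AB)^2$ is needed precisely to compensate for the fact that $\mathfrak{sl}_n$ has one fewer ``direction'' than $\mathfrak{gl}_n$, playing the role that the trace coordinates $\tr X, \tr Y$ played on $\camo$. I would set up the induction so that, assuming all $\tr A^{j'}B^{k'}$ with $j'+k' < d$ are already in $\langle \mathcal{F}\rangle$, a single well-chosen bracket (e.g.\ $\{\tr A^{j-1}B^k, \tr A^2\}$ or $\{\tr A^j, \tr A^p B^q\}$ with the right exponents) produces $\tr A^j B^k$ up to terms of lower degree and up to products of lower-degree generators, all of which are already available.

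The main obstacle I anticipate is bookkeeping the leading coefficients: each bracket formula carries a coefficient like $jq - kp$ or $-2k$, and one must choose the exponents in the bracket so that this coefficient is nonzero (so that the target monomial actually appears with an invertible scalar) and so that the ``collision'' terms — the products $\tr A^{j-1}B^k \cdot \tr A^p B^{q-1}$ etc.\ coming from the $\tfrac1n$-part of Lemma \ref{lem:bracketformular} — are of strictly lower degree and hence inductively disposable. A secondary subtlety is handling the finitely many low-degree base cases ($d \le 3$ or so) where the ``mod $d-4$'' or ``mod $d-6$'' error terms are vacuous or must be tracked explicitly; these I would verify directly from \eqref{derivation-rule} and the cyclicity of the trace, noting that the traceless condition forces the relevant correction terms (those involving $\tr A$ or $\tr B$ as a factor) to vanish identically on $\rabove$. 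Once all monomials $\tr A^j B^k$ are obtained, the reduction of $j, k$ to the range $[0,n]$ is exactly Lemma \ref{lem:finiRingGen} (Cayley--Hamilton), and the passage from ring generators to the full ring completes the argument.
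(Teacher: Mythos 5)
There is a genuine gap, and it lies exactly where you dismiss the difficulty: the passage from ``all ring generators $\tr A^j B^k$ lie in $\lie(\mathcal{F})$'' to ``the whole coordinate ring lies in $\lie(\mathcal{F})$.'' A Lie algebra generated by a set is only a linear span of iterated brackets; it is not closed under multiplication, so the principle you invoke (``the Lie algebra generated by a ring-generating set is in fact the whole ring'') is not a principle at all. The identity $2fg=(f+g)^2-f^2-g^2$ does not rescue it, because squares of elements of $\lie(\mathcal{F})$ need not belong to $\lie(\mathcal{F})$; in the paper's product theorem (Proposition \ref{prop:tensor}) that identity is usable only because the squares $(f_k)^2$, $(g_\ell)^2$, $((f_k+g_\ell))^2$ are \emph{added to the generating set} as Hamiltonians of complete fields (Lemma \ref{lemma:sums}) --- a completely different situation from the fixed four-element set $\mathcal{F}$ here. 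In the paper, proving that arbitrary finite products $\prod_k \tr A^{p_k}B^{q_k}$ lie in $\lie(\mathcal{F})$ is the entire content of the proof (statement \eqref{PrefaceThmHDP}), carried out by nested inductions on the total degree $D$, on the number of factors $m$, and on a lexicographic multi-index (Lemmas \ref{lemma: trXYPRODtrX}, \ref{lemma: trXiatrXpbYqb}, \ref{lemma: nfactors}), with the $\tfrac1n$-product terms of Lemma \ref{lem:bracketformular} serving as the mechanism that creates products out of brackets.

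A secondary point: your structural claim that the traceless condition ``kills many of the $\tfrac1n$-terms'' is only true for the terms containing a bare factor $\tr A$ or $\tr B$; the generic correction terms such as $\tr A^{j-1}B^k\cdot\tr A^pB^{q-1}$ survive on $\rabove$, and they are both an obstacle (they must themselves be shown to lie in $\lie(\mathcal{F})$) and the essential tool for generating products. This already bites in your ``first half'': even to obtain the single traces $\tr A^j$ one needs products like $\tr A^2\,\tr A^{k-1}$ to be available, which the paper arranges through auxiliary brackets such as $\{\tr A^2,\{\tr A^2,(\tr AB)^2\}\}=8(\tr A^2)^2$ and $\{(\tr A^2)^2,\tr A^{k-2}B\}=2\tr A^2\,\tr A^{k-1}$ (Lemma \ref{lemma: trAj&trBk}); your induction on the degree of single trace monomials silently assumes these products are ``already available,'' which is not covered by that induction. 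So the proposal captures the easy layer (generating the trace monomials up to lower-order terms) but omits, rather than solves, the part of the argument that makes the theorem nontrivial.
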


To prove Theorem \ref{theorem: HDP}, we will prove that for all natural numbers $m \ge 1$ and $p_k, q_k \ge 0$ $(k = 1, 2, \dots, m)$ 
\begin{align} \label{PrefaceThmHDP}
    \prod_{k=1}^m \tr A^{p_k}B^{q_k} \in \mathrm{Lie}(\mathcal{F}) \tag{$*$} \quad \text{ on } \rabove.
\end{align}

We will do this by multiple inductions on $m$, on the degree $D$ of the product, and on another index that will occur in the course of the proof. The various induction proofs will be formulated as separate lemmas. When $D=0$, \eqref{PrefaceThmHDP} represents the constant function $1$, which is mapped by the isomorphism $\Theta$ in \eqref{iso-Theta} to the zero vector field, which is in the Lie algebra generated by $\Theta(\mathcal{F})$. The overall inductive assumption is that with some $D = 0,1,2, \dots$
\begin{align} \label{indhyp}
    \text{\eqref{PrefaceThmHDP} holds when } \sum_k (p_k + q_k) \le D \tag{$**$}
\end{align}

Recall the notation $f \thicksim g \mod d$ if $f-g$ on $\cabove$ agrees with a polynomial function in traces of matrix monomials of degree $\le d$. When $\deg f \le D+4$, then by \eqref{indhyp} the difference $f-g$ is in $\mathrm{Lie}(\mathcal{F})$.

The proof of Theorem \ref{theorem: HDP} starts with the generating set 
\begin{align*}
    \mathcal{F} = \{  \tr A^2, \tr B^2, \tr A^3, (\tr AB)^2 \}
\end{align*} 
of Hamiltonian functions corresponding to complete vector fields according to Lemmas \ref{lemma:completeVFs}, \ref{lem:complete-OneVariable}.

\begin{lemma} \label{lemma:completeVFs}
    Hamiltonian functions of the form $H(A)$ or $H(B)$, i.e.\ depending either only on $A$ or only on $B$, and $H = \tr AB$ induce complete vector fields.
\end{lemma}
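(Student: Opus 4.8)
The goal is to show that certain Hamiltonian functions on the Calogero--Moser variety $\cabove$ (and hence on $\tlcamo$) induce complete holomorphic vector fields. I would split the claim into two cases according to the shape of the Hamiltonian: (a) functions of the form $H(A)$ or $H(B)$ depending only on one of the matrix variables, and (b) the specific function $H = \tr AB$. In each case the strategy is the same: write down the Hamiltonian vector field explicitly using the Poisson bracket formula \eqref{poissonbraket} on the ambient space $\matnn \times \matnn$, observe that it is tangent to $\cabove$ (this follows from Lemma \ref{poissonCommute} and from the $\gl_n(\C)$-invariance, which lets it descend to $\camo$ and restrict to $\tlcamo$), and then exhibit the flow in closed form.

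\textbf{Case (a): one-variable Hamiltonians.} Let $H = H(X)$ be a polynomial in the entries of $X$ alone (the trace-polynomial case $H = \tr A^j$ is a special instance after passing to the traceless part). Using \eqref{poissonbraket}, the corresponding vector field on $\matnn \times \matnn$ has no $\partial/\partial X$-component: it is of the form $(X,Y) \mapsto \big(0,\, -\partial H/\partial X\big)$, i.e.\ $X$ is fixed and $Y$ evolves by a constant (in $t$) additive term $Y \mapsto Y - t\,(\partial H/\partial X)(X)$. This is manifestly complete on the ambient space, and because $[X, Y]$ changes only by $-t[X, \partial H/\partial X(X)]$, one checks this commutator term vanishes when $H$ is a trace polynomial in $X$ — so the flow preserves $\cabove$. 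Dually, $H = H(Y)$ gives a shear in the $X$-direction. One then invokes the invariance of $\widehat\Omega$ under conjugation and Lemma \ref{poissonCommute} to descend the flow to $\camo$ and restrict it to $\tlcamo$; completeness is inherited since the flow is defined for all $t \in \CC$ on the ambient Stein space and the subvariety is invariant.

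\textbf{Case (b): $H = \tr AB$.} Here I would compute the Hamiltonian vector field of $\tr XY$ on $\matnn \times \matnn$; by \eqref{poissonbraket} it is the linear vector field $(X, Y) \mapsto (X, -Y)$ (up to sign conventions), whose flow is the hyperbolic scaling $(X, Y) \mapsto (e^t X, e^{-t} Y)$. This is obviously complete and preserves $[X,Y]$, hence preserves $\cabove$; it is $\gl_n(\C)$-equivariant, so it descends to $\camo$, and by Lemma \ref{poissonCommute} (applied with the explicit computation that $\tr X, \tr Y$ are preserved — indeed $\tr(e^t X) \cdot$ etc.) it restricts to $\tlcamo$, matching the fourth family $(A,B) \mapsto (A\exp(2t\,\tr AB),\dots)$ modulo the standard identification after symplectic reduction.

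\textbf{Main obstacle.} The routine part is writing down the flows; the delicate point is bookkeeping the passage through the symplectic reduction: one must be sure that a vector field which is complete on the level set $\mu^{-1}(\gln\cdot\xi) \subset \matnn\times\matnn$ (or on $\cabove$) and $\gl_n(\C)$-invariant actually descends to a \emph{complete} field on the quotient $\camo$ and then restricts to a complete field on $\tlcamo$ — i.e.\ that no trajectory escapes to a bad orbit and that the quotient map intertwines the flows. This is where Lemma \ref{poissonCommute} does the real work for the restriction to $\tlcamo$, and where the freeness of the $\gl_n(\C)$-action on $\cabove$ (noted in the paper) and the properness of the quotient guarantee the descent to $\camo$. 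I expect the write-up to consist mostly of these explicit flow formulas together with a short verification that each one fixes the defining equations of $\cabove$ and of $\tlcamo$.
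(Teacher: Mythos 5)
Your case (a) follows the paper's proof: by \eqref{poissonbraket} a Hamiltonian depending only on $X$ (or only on $A$, since $A=X-\tfrac{\tr X}{n}I_n$ is a function of $X$) gives a vector field with no $\partial/\partial X$-component, hence a shear in the $Y$-direction, which is complete; your additional check that the shear preserves the rank condition because $\bigl[X,\tfrac{\partial H}{\partial X}(X)\bigr]=0$ for trace polynomials is correct and only implicit in the paper. Also, the ``main obstacle'' you single out is lighter than you suggest: a $\gl_n(\C)$-invariant complete flow on $\cabove$ pushes forward to a globally defined flow on the quotient, so completeness on $\camo$ descends automatically; the paper simply reads \eqref{poissonbraket} as the bracket of invariant functions on $\camo$ and does not dwell on this point.

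The genuine slip is in case (b). The Hamiltonian in the lemma is $\tr AB=\tr XY-\tfrac1n\tr X\,\tr Y$, not $\tr XY$; on $\camo$ these are different functions with different Hamiltonian vector fields, and completeness of the hyperbolic scaling flow of $\tr XY$ does not by itself give completeness for $\tr AB$ (sums of complete fields need not be complete, so the correction term must be handled). Moreover your identification with ``the fourth family'' of automorphisms is wrong: that family $(A,B)\mapsto(A\exp(2t\tr AB),B\exp(-2t\tr AB))$ is the flow of $(\tr AB)^2$, whereas the flow of $\tr AB$ on $\tlcamo$ is just the scaling $(A,B)\mapsto(e^{t}A,e^{-t}B)$ (up to normalization). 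The repair is exactly what the paper does: compute the field of $\tr AB$ itself from \eqref{poissonbraket}, namely $\dot X=A=X-\tfrac{\tr X}{n}I_n$, $\dot Y=-B$, a linear system of ODEs with constant coefficients, hence complete; on the locus $\tr X=\tr Y=0$ this is your scaling. If you instead insist on working with $\tr XY$, you must argue that $\jmath^\ast(\tr XY)=\jmath^\ast(\tr AB)$ on $\tlcamo$ so the induced Hamiltonian fields on $(\tlcamo,\omega)$ coincide, and you must verify tangency of $V_{\tr XY}$ to $\tlcamo$ directly (your remark $\tr(e^tX)=e^t\tr X$ does this), since Lemma \ref{poissonCommute} covers $\tr A^jB^k$ but not $\tr XY$.
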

\begin{proof}
    By the explicit expression \eqref{poissonbraket} of the Poisson bracket on $\camo$, the Hamiltonian vector field associated to a function $H$ is
    \begin{align*}
        V_H = \sum_{j,k} \frac{\partial H}{\partial Y_{jk}} \frac{\partial }{\partial X_{kj}} - \sum_{j,k} \frac{\partial H}{\partial X_{jk}} \frac{\partial }{\partial Y_{kj}}. 
    \end{align*}
    If $H$ depends only on $X$, then the flow of $V_H$ is a polynomial shear in the $Y$-direction and thus complete. Similarly, if $H$ depends only on $Y$.  Furthermore, since $A= X - \frac{\tr X}{n}I_n$ and $B= Y - \frac{\tr Y}{n}I_n$, $H(A)$ and $H(B)$ induce complete Hamiltonian vector fields on $\camo$ as well. 
    For $H= \tr AB$ we write
    \begin{align*}
        \tr AB = \tr XY - \frac{1}{n} \tr X \tr Y,
    \end{align*}
    and can read off the associated vector field:
    \begin{align*}
        \quad V_H = &
        \sum_{j \neq k} X_{kj}\frac{\partial}{\partial X_{kj}} + \sum_{j=1}^n (X_{jj}-\frac{1}{n} \tr X) \frac{\partial}{\partial X_{jj}} \\
        & - \sum_{j \neq k} Y_{kj}\frac{\partial}{\partial Y_{kj}} - \sum_{j=1} ^n (Y_{jj} -\frac{1}{n} \tr Y) \frac{\partial}{\partial Y_{jj}}.
    \end{align*}
    Its flow is complete since this vector field corresponds to a linear system of ODEs with constant coefficients. 
\end{proof}

\begin{lemma} \label{lem:complete-OneVariable}
    Let $H$ be a Hamiltonian function such that $\Theta(H)$ is complete. Let $f \colon \CC \to \CC$ be an entire function in one variable. Then $\Theta(f(H))$ is complete.  
\end{lemma}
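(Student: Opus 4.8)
The plan is to show that $\Theta(f(H))$ is complete by producing its flow explicitly from the flow of $\Theta(H)$. Write $V = \Theta(H)$ and let $\varphi_t$ denote its (complete) flow. The key observation is that $H$ is a first integral of its own Hamiltonian flow: since $i_V\omega = dH$, we get $V(H) = dH(V) = \omega(V,V) = 0$, so $H \circ \varphi_t = H$ for all $t$. Equivalently, $f(H)$ is constant along the orbits of $V$. Now compute the Hamiltonian vector field of $f(H)$: from $d(f(H)) = f'(H)\, dH$ and the fact that $\sharp$ is $\holo(X)$-linear, we obtain
\begin{align*}
    \Theta(f(H)) = (d(f(H)))^\sharp = f'(H) \cdot (dH)^\sharp = f'(H) \cdot V.
\end{align*}
So the vector field we must integrate is $W := f'(H)\cdot V$, a function multiple of $V$ by a function that is constant on $V$-orbits.

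Next I would verify that the flow of $W$ is obtained from that of $V$ by a time reparametrization depending only on the orbit. Define, for a point $x$, the scalar $c(x) = f'(H(x))$; since $H$ is constant on orbits, so is $c$. I claim the flow of $W$ is $\psi_t(x) = \varphi_{c(x)\,t}(x)$. Indeed, differentiating in $t$ gives $\tfrac{d}{dt}\psi_t(x) = c(x)\, V(\varphi_{c(x)t}(x)) = c(\varphi_{c(x)t}(x))\, V(\psi_t(x)) = W(\psi_t(x))$, where I used that $c$ is constant along the $V$-orbit through $x$ (hence $c(x) = c(\varphi_{c(x)t}(x))$); and $\psi_0(x) = x$. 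Since $\varphi$ is defined for all real/complex time and $c(x)t$ ranges over all of $\CC$ as $t$ does, $\psi_t(x)$ is defined for all $t\in\CC$. Holomorphic dependence on $(t,x)$ follows because $c$ and $\varphi$ are holomorphic. This exhibits $W = \Theta(f(H))$ as a complete holomorphic vector field.

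The only genuinely delicate point is to make sure $\psi_t$ is a well-defined holomorphic (global) flow, i.e.\ that the formula $\psi_t(x) = \varphi_{c(x)t}(x)$ really is holomorphic jointly in $(t,x) \in \CC \times X$ and that $\psi_{s+t} = \psi_s\circ\psi_t$. Joint holomorphy is immediate from holomorphy of $\varphi\colon\CC\times X\to X$ and of $c\colon X\to\CC$. The flow property reduces once more to the identity $c\circ\varphi_{\bullet}(x)\equiv c(x)$: one computes $\psi_s(\psi_t(x)) = \varphi_{c(\psi_t(x))s}(\psi_t(x)) = \varphi_{c(x)s}(\varphi_{c(x)t}(x)) = \varphi_{c(x)(s+t)}(x) = \psi_{s+t}(x)$. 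Finally, one records that $\psi_t$ is symplectic (equivalently, that $W$ is indeed Hamiltonian with Hamiltonian function $f(H)$), which is already guaranteed by the computation $i_W\omega = f'(H)\,i_V\omega = f'(H)\,dH = d(f(H))$. This completes the argument; all of it is routine once the reparametrization idea is in place.

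\medskip

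\begin{proof}
Write $V = \Theta(H)$, a complete holomorphic vector field, and let $\varphi\colon \CC \times X \to X$, $(t,x)\mapsto \varphi_t(x)$, denote its flow. Since $i_V\omega = dH$, we have $V(H) = dH(V) = (i_V\omega)(V) = \omega(V,V) = 0$; hence $H$ is constant along the orbits of $V$, i.e.\ $H\circ\varphi_t = H$ for all $t\in\CC$. Set $c := f'(H)\in\holo(X)$; then $c\circ\varphi_t = c$ for all $t$ as well.

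From $d(f(H)) = f'(H)\,dH$ and the $\holo(X)$-linearity of $\sharp$ we get
\[
    \Theta(f(H)) = \flat^{-1}(d(f(H))) = f'(H)\cdot\flat^{-1}(dH) = c\cdot V =: W .
\]
Define $\psi\colon \CC\times X \to X$ by $\psi_t(x) := \varphi_{c(x)t}(x)$. This map is holomorphic, being a composition of the holomorphic maps $x\mapsto c(x)$ and $\varphi$. Using $c\circ\varphi_{\bullet}(x)\equiv c(x)$ we compute, for fixed $x$,
\[
    \frac{d}{dt}\psi_t(x) = c(x)\,V\!\left(\varphi_{c(x)t}(x)\right) = c\!\left(\psi_t(x)\right) V\!\left(\psi_t(x)\right) = W\!\left(\psi_t(x)\right),
\]
and $\psi_0(x) = x$. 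Moreover, for $s,t\in\CC$,
\[
    \psi_s\!\left(\psi_t(x)\right) = \varphi_{c(\psi_t(x))\,s}\!\left(\psi_t(x)\right) = \varphi_{c(x)s}\!\left(\varphi_{c(x)t}(x)\right) = \varphi_{c(x)(s+t)}(x) = \psi_{s+t}(x).
\]
Thus $\psi$ is a globally defined holomorphic flow, and it integrates $W = \Theta(f(H))$. Therefore $\Theta(f(H))$ is complete.
\end{proof}
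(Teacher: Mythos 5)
Your proof is correct and follows essentially the same route as the paper: the paper's proof simply observes that $\Theta(f(H)) = f'(H)\,\Theta(H)$ and that $f'(H)$ lies in the kernel of the complete field $\Theta(H)$, invoking the standard fact that a complete field multiplied by a function in its kernel stays complete, while you prove that fact explicitly via the reparametrized flow $\psi_t(x)=\varphi_{f'(H(x))\,t}(x)$. The only point worth noting is that both arguments implicitly use the flow of $\Theta(H)$ at complex times (your $c(x)$ need not be real), which is harmless here since the complete Hamiltonian fields used in the paper are $\CC$-complete.
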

\begin{proof}
    The Hamiltonian vector field $\Theta(f(H)) = f'(H) \Theta(H)$ is complete since $f'(H)$ is in the kernel of the complete vector field $\Theta(H)$.
\end{proof}

\begin{lemma} \label{lemma: trAj&trBk}
     For any integer $j, k \ge 2$, $\tr A^j, \tr B^k$ are in $\lie(\mathcal{F})$.
\end{lemma}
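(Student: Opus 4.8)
The plan is to bootstrap from the generators in $\mathcal{F}$ using the special bracket formulas collected after Lemma \ref{lem:bracketformular}. We want to show $\tr A^j \in \lie(\mathcal{F})$ for all $j \ge 2$; the argument for $\tr B^k$ is entirely symmetric (swapping the roles of $A$ and $B$, using $\tr B^2, \tr A^2$ in place of $\tr A^2, \tr B^2$ — note that $\tr B^3$ is not in our generating set, but it will be produced along the way, so we must be slightly careful about the order of operations). I would proceed by induction on $j$, the base cases $j = 2, 3$ being immediate since $\tr A^2, \tr A^3 \in \mathcal{F}$.

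First I would use \eqref{trAjB}, namely $\{ \tr A^j, \tr B^2 \} = 2j \tr A^{j-1} B$, together with its mirror image, to move between the one-variable traces and the mixed traces $\tr A^{j} B$. The key mechanism to raise the power of $A$ is the bracket \eqref{ABweight}: $\{ \tr A^j B^k, \tr AB \} = (j-k) \tr A^j B^k$, which shows that each bidegree-$(j,k)$ piece with $j \ne k$ is individually recoverable once a sum of such traces lies in $\lie(\mathcal{F})$. Concretely, from $\tr A^3 \in \mathcal{F}$ and $\tr B^2 \in \mathcal{F}$ one gets $\tr A^2 B \in \lie(\mathcal{F})$ via \eqref{trAjB} (with $j=3$), and then bracketing $\tr A^2 B$ with $\tr A^2$ via (the mirror of) \eqref{trAjB}-type formulas, or more directly using \eqref{AjBk} and the formula $\{ \tr A^j B^k, \tr A^2 \} \thicksim -2k \tr A^{j+1} B^{k-1} \mod j+k-4$, lets one push the $A$-degree up while lowering the $B$-degree, modulo lower-degree terms that are already in $\lie(\mathcal{F})$ by the overall inductive hypothesis \eqref{indhyp}. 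Iterating, $\tr A^{j} B^{k} \in \lie(\mathcal{F})$ for a suitable range, and in particular one produces $\tr A^{j} B$ and then, by $\{ \tr A^{j} B, \tr A^{j'} B\} \thicksim (\text{coeff})\,\tr A^{j+j'-1} B \mod \dots$ using Lemma \ref{lem:bracketformular}, higher powers $\tr A^m$ after absorbing the correction terms (which are lower-degree or products of lower-degree traces, hence in $\lie(\mathcal{F})$ by \eqref{indhyp}).

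The technically delicate point — and the step I expect to be the main obstacle — is bookkeeping the $\mod$ corrections and the $-\frac{1}{n}(\text{products of lower traces})$ terms that appear in Lemma \ref{lem:bracketformular} and \eqref{AjBk}. These correction terms are either (a) traces of strictly lower degree, covered by the inductive hypothesis \eqref{indhyp}, or (b) products of two traces each of strictly lower $A$-plus-$B$ degree, also covered by \eqref{indhyp} applied to the factored form \eqref{PrefaceThmHDP} with $m = 2$. So as long as the induction on $D$ is in place, each bracket computation yields the desired leading term modulo things already known to lie in $\lie(\mathcal{F})$. The careful part is choosing the exponents so that the leading coefficient $(jq - kp)$ or $\pm 2k$ etc.\ does not vanish, which is why one tracks bidegrees with $j \ne k$ and uses \eqref{ABweight} to separate homogeneous pieces.

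In summary: (1) base cases $\tr A^2, \tr A^3 \in \mathcal{F}$; (2) obtain $\tr A^2 B$ and symmetrically $\tr AB^2$ via \eqref{trAjB}; (3) use \eqref{AjBk}, \eqref{trAjB}, and the bracket with $\tr A^2$ (resp.\ $\tr B^2$) to climb to $\tr A^j B$ and $\tr A B^k$ for all $j, k \ge 1$, absorbing all lower-order corrections via \eqref{indhyp}; (4) bracket two such mixed traces via Lemma \ref{lem:bracketformular} to reach $\tr A^m$ and $\tr B^m$ for all $m \ge 2$, again discarding lower-degree and product corrections by \eqref{indhyp}; (5) conclude by induction on $m$ (equivalently on the degree $D$). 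The whole argument is a finite sequence of bracket identities plus the inductive hypothesis on degree, so no genuinely new idea is needed beyond careful organization.
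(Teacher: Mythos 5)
There is a genuine gap, and it sits exactly at the point you flag as ``bookkeeping.'' The correction terms produced by \eqref{AjBk} and Lemma \ref{lem:bracketformular} of the form $\tfrac{1}{n}\tr A^{j-1}\cdot \tr A^{p}B^{q-1}$ are \emph{not} of lower total degree than the leading term: in the step that raises the pure power, $\{\tr A^3, \tr A^{k-1}B\} = 3\tr A^{k+1} - \tfrac{3}{n}\tr A^2\,\tr A^{k-1}$, the product $\tr A^2\,\tr A^{k-1}$ has total degree $k+1$, the same as the target $\tr A^{k+1}$. So your case (b) cannot be absorbed by \eqref{indhyp}: that hypothesis only covers products of total degree $\le D$, while here the degree runs up to the target's degree (up to $D+4$ in the conditional reading, arbitrary in the unconditional one). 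Moreover, the lemma as stated (and as used later, e.g.\ for $(\tr A^j)^2$ and inside Lemmas \ref{lemma: trXYPRODtrX} and \ref{lemma: nfactors}) is \emph{unconditional} in $j,k$ and does not assume \eqref{indhyp}; appealing to \eqref{indhyp} here either changes the statement or risks circularity, since the later lemmas that do control same-degree products of traces are proved only after, and by means of, this one. Note also that $\lie(\mathcal{F})$ is closed under Poisson brackets, not under multiplication, so knowing $\tr A^2$ and $\tr A^{k-1}$ separately lie in $\lie(\mathcal{F})$ gives you nothing about their product.

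The missing idea, which is the heart of the paper's short proof, is to \emph{manufacture} that same-degree product inside $\lie(\mathcal{F})$ by explicit exact brackets: $\{\tr A^2, \{\tr A^2, (\tr AB)^2\}\} = 8(\tr A^2)^2$ (using \eqref{ABweight}), and then $\{(\tr A^2)^2, \tr A^{k-2}B\}$ is a nonzero multiple of $\tr A^2\,\tr A^{k-1}$; subtracting this from $\{\tr A^3, \tr A^{k-1}B\}$ isolates $\tr A^{k+1}$, with $\tr A^{k-1}B$ itself obtained from $\{\tr A^k,\tr B^2\}=2k\,\tr A^{k-1}B$ as in \eqref{trAjB}. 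All brackets involved have $B$-degree at most $1$, so they are exact (no ``$\mathrm{mod}$'' terms) and no appeal to \eqref{indhyp} is needed at any point. A secondary slip in your step (4): bracketing two mixed traces $\tr A^{j}B$ and $\tr A^{j'}B$ via Lemma \ref{lem:bracketformular} yields bidegree $(j+j'-1,1)$, i.e.\ it still carries one factor of $B$; to reach a pure power $\tr A^m$ you must bracket a pure trace with a mixed trace, which is precisely where the problematic same-degree product correction appears and must be cancelled as above.
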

\begin{proof}
    For $k \le 3$ we only need to check $\tr B^3 \in \lie(\mathcal{F})$, which follows from \eqref{AjBk} and applying $\{ \tr B^2, \cdot \}$ to $\tr A^3$ thrice. Then we proceed by induction on $k \ge 3$. The induction step follows from
    \begin{align*}
        \{ \tr A^k , \tr B^2 \} &= 2k \, \tr A^{k-1}B \\
        \{ \tr A^2, \{ \tr A^2, (\tr AB)^2 \} \} &=  8 (\tr A^2)^2 \\
        \{ (\tr A^2)^2, \tr A^{k-2} B \} &= 2 \tr A^2 \tr A^{k-1} \\
        \{ \tr A^3, \tr A^{k-1}B \} &= 3 \, \tr A^{k+1} - \frac{3}{n} \tr A^2 \tr A^{k-1}
    \end{align*} 
    Similarly for $\tr B^{k+1}$. 
\end{proof}

\begin{lemma}
    For any integer $j, k \ge 2$, $(\tr A^j)^2, (\tr B^k)^2$ are in $\lie(\mathcal{F})$.
\end{lemma}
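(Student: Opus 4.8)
The plan is to leverage the previous lemma (Lemma~\ref{lemma: trAj&trBk}), which already puts every single trace power $\tr A^j$ and $\tr B^k$ ($j,k\ge 2$) into $\lie(\mathcal{F})$, and to produce the squares as outputs of Poisson brackets. The key observation is the identity appearing implicitly in the proof of Lemma~\ref{lemma: trAj&trBk}, namely that bracketing $(\tr A^2)^2$ against suitable functions, or bracketing $\tr A^j$ against $\tr A^p B^q$-type functions, produces products of two $A$-traces. Concretely, from \eqref{trAjB} we have $\{\tr A^{j+1},\tr B^2\} = 2(j+1)\tr A^{j}B$, and then by \eqref{AjBk}-style computations (or directly from Lemma~\ref{lem:bracketformular}) bracketing $\tr A^{j}$ against $\tr A^{j}B$ yields, modulo lower-degree terms already in $\lie(\mathcal{F})$ by the running induction hypothesis \eqref{indhyp}, a nonzero multiple of $\tr A^{2j-1} \cdot \tr A^{j-1}$ plus a multiple of $\tr A^{2j}$. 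Wait—that is not quite the square. The cleaner route is: bracket $\tr A^j B$ against $\tr A^j$ to get a term proportional to $\tr A^{2j-1}B^{0}$ and a term proportional to $(\tr A^{j-1})(\tr A^{j}B^{0})$...

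Let me restate the intended mechanism more carefully. First I would note that the $\frac{1}{n}$-correction term in Lemma~\ref{lem:bracketformular} is exactly of the form (constant)$\cdot \tr A^{a}B^{b}\cdot \tr A^{c}B^{d}$, a \emph{product} of two traces; so to get $(\tr A^j)^2$ it suffices to arrange a bracket whose correction term is a nonzero multiple of $\tr A^j \cdot \tr A^j$ while all other terms (the main term $\tr A^{\bullet}B^{\bullet}$ and the mod-lower-degree remainder) already lie in $\lie(\mathcal{F})$ by Lemma~\ref{lemma: trAj&trBk} and the induction hypothesis. Bracketing $\tr A^{j+1}$ with $\tr B^2$ gives $2(j+1)\tr A^{j}B$ (this is \eqref{trAjB} with exponent $j+1$); then by \eqref{trAjB}-type reasoning applied again, or by bracketing $\tr A^{j}B$ with $\tr A^{j}$ via Lemma~\ref{lem:bracketformular} with $(j,k)=(j,1)$, $(p,q)=(j,0)$, one gets $jq-kp = -j$ times $\tr A^{2j-1}B^{0}$, correction term $-\tfrac1n(jq\,\cdots - kp\,\tr A^{j}B^{0}\tr A^{j-1}B^{0}) = \tfrac{j}{n}\tr A^{j}\tr A^{j-1}$ — still not the exact square, but iterating with the right exponents it will produce $(\tr A^{j})^2$ up to a scalar and a sum of already-known terms. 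For $j=2$ specifically, the identity $\{\tr A^2,\{\tr A^2,(\tr AB)^2\}\} = 8(\tr A^2)^2$ already displayed in the proof of Lemma~\ref{lemma: trAj&trBk} gives $(\tr A^2)^2$ outright; for general $j$ one bootstraps from there.

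So the steps are: (1) record $(\tr A^2)^2\in\lie(\mathcal F)$ from $\{\tr A^2,\{\tr A^2,(\tr AB)^2\}\}=8(\tr A^2)^2$, and symmetrically $(\tr B^2)^2$ from $\{\tr B^2,\{\tr B^2,(\tr AB)^2\}\}=8(\tr B^2)^2$; (2) for $j\ge 3$, pick a bracket $\{\tr A^{a}B,\tr A^{c}\}$ (with $a,c$ chosen so that $a+c-1$ produces the desired higher $A$-power and the correction term is a multiple of $\tr A^{j}\cdot\tr A^{j}$ after using Lemma~\ref{lemma: trAj&trBk} to remove the main term and \eqref{indhyp} to remove the mod-remainder); the cleanest choice is $\{\tr A^{j-1}B,\tr A^{j+1}\}$ or similar, solving a small linear system in $j$ for the coefficients; (3) conclude $(\tr A^j)^2\in\lie(\mathcal F)$, and symmetrically $(\tr B^k)^2\in\lie(\mathcal F)$, by the $A\leftrightarrow B$ symmetry of $\mathcal F$ modulo the asymmetric generator $\tr A^3$ (which is not needed here since $\tr B^3\in\lie(\mathcal F)$ was already established).

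The main obstacle is purely bookkeeping: choosing the exponents in step~(2) so that the $\frac1n$-correction term in Lemma~\ref{lem:bracketformular} is a \emph{nonzero} scalar multiple of $(\tr A^{j})^2$ rather than of some other product $\tr A^{a}\tr A^{b}$ with $a+b=2j$, $a\ne b$. If the natural bracket yields $\tr A^{j+1}\tr A^{j-1}$ instead, one must either find a different bracket or take a $\CC$-linear combination of several brackets (using that all the "wrong" products $\tr A^{a}\tr A^{b}$ of total degree $2j$ are themselves handled by the same argument run in parallel, or fall under the induction hypothesis on the second index). Resolving this is the same flavor of finite linear-algebra argument that powers Lemma~\ref{lemma: trAj&trBk}, so no genuinely new idea is required; I expect a half-page of explicit coefficient-matching to finish it.
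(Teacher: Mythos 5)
Your step (1) is correct, and your general idea for $j\ge 3$ --- manufacture $(\tr A^j)^2$ as the $\tfrac1n$-correction term of a bracket $\{\tr A^c,\tr A^aB\}$, then subtract the main term using Lemma \ref{lemma: trAj&trBk} --- can be made to work. But as written the decisive step is left open, and the one concrete bracket you name is the wrong one: since $\{\tr A^{c},\tr A^{a}B\}=c\,\tr A^{a+c-1}-\tfrac{c}{n}\tr A^{c-1}\tr A^{a}$ (this is \emph{exact}, by \eqref{derivation-rule} and cyclicity, because only one factor $B$ occurs, so no reordering of mixed monomials is needed), your "cleanest choice" $\{\tr A^{j-1}B,\tr A^{j+1}\}$ produces $\tr A^{j}\tr A^{j-1}$, not $(\tr A^{j})^{2}$. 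The choice that works is $a=j$, $c=j+1$: $\{\tr A^{j+1},\tr A^{j}B\}=(j+1)\tr A^{2j}-\tfrac{j+1}{n}(\tr A^{j})^{2}$, where $\tr A^{j+1},\tr A^{2j}\in\lie(\mathcal F)$ by Lemma \ref{lemma: trAj&trBk} and $\tr A^{j}B=\tfrac{1}{2(j+1)}\{\tr A^{j+1},\tr B^{2}\}\in\lie(\mathcal F)$. Your proposed fallback --- kill remainders and "wrong" products via \eqref{indhyp}, linear combinations, or parallel arguments --- is a genuine gap: it is not carried out, and invoking \eqref{indhyp} would attach a degree restriction (roughly $2j\le D+4$) to a lemma that is stated, and is needed, unconditionally; with the exact bracket above no induction hypothesis is required at all. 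Your final symmetry remark is fine once the $A$-case is complete, since Lemma \ref{lemma: trAj&trBk} supplies all $\tr B^{k}$.

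For comparison, the paper's proof bypasses Lemma \ref{lem:bracketformular} and the $\tfrac1n$-terms entirely by using the exact weight identity \eqref{ABweight}: bracketing with $(\tr AB)^{2}$ acts as a grading operator, giving $\{\tr A^{j},(\tr AB)^{2}\}=2j\,\tr A^{j}\tr AB$ and then $\{\tr A^{j},\tr A^{j}\tr AB\}=j\,(\tr A^{j})^{2}$ --- two exact brackets, valid for all $j\ge 2$ at once, with no correction terms, no case distinction at $j=2$, and no appeal to \eqref{indhyp}. You would do well to adopt either that route or the corrected bracket $\{\tr A^{j+1},\tr A^{j}B\}$; either way the "half-page of coefficient-matching" you anticipate is unnecessary, but some such completion is needed before your proposal counts as a proof.
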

\begin{proof}
    By \eqref{ABweight}
    \begin{align*}
        \{ \tr A^j, (\tr AB)^2 \} &= 2 j \tr A^j \tr AB \\
        \{\tr A^j, \tr A^j \tr AB \} &= j (\tr A^j)^2
    \end{align*}
    By symmetry $(\tr B^k)^2 \in \lie(\mathcal{F})$. 
\end{proof}

\begin{lemma} \label{lemma: trXYPRODtrX}
    Assume that the induction hypothesis \eqref{indhyp} holds, and let 
    \[  p, q, m \in \mathbb{N}_0, i_1, \dots, i_m  \ge 2
    \]
    with $p+q+\sum i_k \le D+4$. Then 
    \begin{align*}
        \tr A^p B^q  \prod_{k=1}^m \tr A^{i_k} \in \lie (\mathcal{F}). 
    \end{align*}
\end{lemma}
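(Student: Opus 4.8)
The plan is to first settle the case $m=0$ and then induct on the number $m$ of pure-$A$ factors, in each step applying one of the collected special cases of Lemma~\ref{lem:bracketformular} to isolate the desired product and absorbing the resulting ``$\mod$'' remainders into the overall induction hypothesis~\eqref{indhyp}. The slack ``$+4$'' in the bound $p+q+\sum i_k\le D+4$ is exactly what guarantees that those remainders have degree $\le D$.

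\emph{The case $m=0$.} We must show $\tr A^pB^q\in\lie(\mathcal{F})$ whenever $p+q\le D+4$. If $q=0$ this is Lemma~\ref{lemma: trAj&trBk} (for $p\le 1$ the function is constant or vanishes, since $\tr A=0$ on $\rabove$), so assume $p,q\ge 1$ and induct on $p$. For $p=1$, apply $\{\,\cdot\,,\tr A^2\}$ to $\tr B^{q+1}\in\lie(\mathcal{F})$; by the special case $\{\tr A^jB^k,\tr A^2\}\thicksim-2k\,\tr A^{j+1}B^{k-1}$ of Lemma~\ref{lem:bracketformular} this equals $-2(q+1)\tr AB^q$ up to a polynomial in traces of degree $\le q-3\le D$, hence $\tr AB^q\in\lie(\mathcal{F})$ by~\eqref{indhyp}. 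For $p\ge 2$, apply $\{\,\cdot\,,\tr A^2\}$ to $\tr A^{p-1}B^{q+1}$, which lies in $\lie(\mathcal{F})$ by the inductive hypothesis on $p$ (its degree is $p+q\le D+4$); this produces $-2(q+1)\tr A^pB^q$ up to a polynomial in traces of degree $\le p+q-4\le D$, again absorbed by~\eqref{indhyp}.

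\emph{The inductive step $m\ge 1$.} Assume the lemma for all products with fewer than $m$ pure-$A$ factors. The element $\tr A^pB^{q+1}\prod_{k=1}^{m-1}\tr A^{i_k}$ has $m-1$ pure-$A$ factors and degree $p+q+1+\sum_{k=1}^{m-1}i_k=\bigl(p+q+\sum_{k=1}^{m}i_k\bigr)+1-i_m\le D+3$, so it lies in $\lie(\mathcal{F})$ by the induction on $m$. Apply $\{\tr A^{i_m+1},\,\cdot\,\}$ to it. Since $\tr A^{i_m+1}$ Poisson-commutes with every $\tr A^{i_k}$, the bracket equals $\{\tr A^{i_m+1},\tr A^pB^{q+1}\}\cdot\prod_{k=1}^{m-1}\tr A^{i_k}$, and the special case $\{\tr A^j,\tr A^pB^q\}\thicksim jq\,\tr A^{j+p-1}B^{q-1}-\frac{jq}{n}\tr A^{j-1}\cdot\tr A^pB^{q-1}$ of Lemma~\ref{lem:bracketformular} yields
\begin{align*}
&\{\tr A^{i_m+1},\, \tr A^pB^{q+1}\textstyle\prod_{k=1}^{m-1}\tr A^{i_k}\}\\
&\quad\thicksim (i_m+1)(q+1)\,\tr A^{i_m+p}B^{q}\textstyle\prod_{k=1}^{m-1}\tr A^{i_k}\\
&\qquad -\frac{(i_m+1)(q+1)}{n}\,\tr A^{p}B^{q}\textstyle\prod_{k=1}^{m}\tr A^{i_k},
\end{align*}
up to a polynomial in traces of degree $\le\bigl(p+q+\sum_{k=1}^{m}i_k\bigr)-4\le D$. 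The left-hand side is in $\lie(\mathcal{F})$ (using $\tr A^{i_m+1}\in\lie(\mathcal{F})$ from Lemma~\ref{lemma: trAj&trBk}); the first term on the right is in $\lie(\mathcal{F})$ by the induction on $m$, as it has $m-1$ pure-$A$ factors and degree $p+q+\sum_{k=1}^m i_k\le D+4$; and the remainder is in $\lie(\mathcal{F})$ by~\eqref{indhyp}. Solving for the second term on the right and dividing by the nonzero constant $(i_m+1)(q+1)/n$ gives $\tr A^pB^q\prod_{k=1}^m\tr A^{i_k}\in\lie(\mathcal{F})$, completing the induction on $m$.

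The computations are straightforward substitutions into the collected special cases of Lemma~\ref{lem:bracketformular}; the only point requiring care is the degree accounting — one must check that every ``$\mod$'' exponent produced lands at or below $D$, so that~\eqref{indhyp} can be invoked — and I do not anticipate any genuine obstacle beyond this bookkeeping.
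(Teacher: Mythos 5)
Your proof is correct and follows essentially the same route as the paper's: induction on the number of pure-$A$ factors, extracting the target from the $\tfrac1n$ cross-term of a bracket with a pure power $\tr A^{i+1}$ (via the special cases of Lemma~\ref{lem:bracketformular} and Lemma~\ref{lemma: trAj&trBk}), with all lower-degree remainders absorbed by \eqref{indhyp}. The only, harmless, differences are organizational: you bracket $\tr A^{i_m+1}$ directly against $\tr A^pB^{q+1}\prod_{k=1}^{m-1}\tr A^{i_k}$ so the target appears in a single step, whereas the paper brackets against $\tr B^{p+q+1}\prod_{k=1}^m\tr A^{i_k}$ and then pulls the $A$-powers across by repeated $\{\tr A^2,\cdot\}$, and it treats the case $q=0$ separately via $(\tr AB)^2$.
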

\begin{proof}
    The special case $q = 0$ follows from 
    \begin{align}
        \{ \tr A^j, (\tr AB)^2 \} &= 2 j \tr A^j \tr AB \nonumber \\
        \{ \tr A^j \tr AB, \prod_{k=1}^m \tr A^{i_k} \} &= - \left(\sum_{k=1}^m i_k \right) \tr A^j \prod_{k=1}^m \tr A^{i_k} \label{all-p_k=0}
    \end{align}
    which holds without condition on degrees. 

    When $q > 0$ we proceed by induction on $m$.
    The base case $m=0$ follows from $\tr B^{p+q} \in \lie(\mathcal{F})$ and 
    \begin{align*}
        \{ \tr A^2, \tr B^{p+q} \} &= 2 (p+q) \tr A B^{p+q-1} \\
        \{ \tr A^2, \tr A B^{p+q-1} \} &\thicksim 2 (p+q-1) \tr A^2 B^{p+q-2} \mod p+q-4
    \end{align*}
    When $p+q -4 \le D$, the hypothesis \eqref{indhyp} implies that additional summands of lower degree are in $\lie(\mathcal{F})$. Inductively
    \begin{align} \label{pullingright}
        \{ \tr A^2, \tr A^j B^{p+q-j} \} \thicksim 2 (p+q-j) \tr A^{j+1} B^{p+q-j-1}  \mod p+q-4
    \end{align}
    we obtain any $\tr A^p B^q$ as long as $p+q \le D +4$. 
    The next case $m=1$ follows from the identity \eqref{AjBk} and the same process of taking brackets with $\tr A^2$ as in \eqref{pullingright}. 
    
    Assume the lemma holds for some $m \ge 0$, and consider a product with $(m+1)$ factors of the form $\tr A^{i}$
    \[
        f= \tr A^p B^q  \prod_{k=0}^m \tr A^{i_k}, \text{ where } p+q+\sum_{k=0}^m i_k \le D+4 
    \]
    Since $i_0 \ge 1$, we have
    \[
        p+q+1+\sum_{k=1}^m i_k \le D+4 
    \]
    Thus $\tr B^{p+q+1} \prod_{k=1}^m \tr A^{i_k} \in \lie(\mathcal{F})$ by the induction assumption on $m$. Hence
    \begin{align*}
        \lie(\mathcal{F}) &\ni \{ \tr A^{i_0+1}, \tr B^{p+q+1} \prod_{k=1}^m \tr A^{i_k} \} \\
        &= (i_0+1)(p+q+1) \tr A^{i_0}B^{p+q} \prod_{k=1}^m \tr A^{i_k} \\
        &\quad - \frac{(i_0+1)(p+q+1)}{n} \tr B^{p+q} \tr A^{i_0} \prod_{k=1}^m \tr A^{i_k}
    \end{align*}
    The first summand is in $\lie(\mathcal{F})$ by the induction assumption on $m$; a repeated application of $\{ \tr A^2, \cdot \}$ on the second summand yields $f \in \lie(\mathcal{F})$, which completes the induction step. 
\end{proof}

Next, we aim for more factors of the form $\tr A^p B^q$. 
\begin{lemma} \label{lemma: trXiatrXpbYqb}
    Assume that the induction hypothesis \eqref{indhyp} holds, and let $l \in \mathbb{N}, m \in \mathbb{N}_0$, $i_1,\dots,i_l \ge 2, p_1, \dots, p_m,$ $q_1,\dots,q_m \in \mathbb{N}_0$ with $\sum_a i_a+\sum_b (p_b + q_b) \le D+4$. Then 
    \begin{align*}
        \left( \prod_{a=1}^l \tr A^{i_a} \right) \prod_{b=1}^m \tr A^{p_b}B^{q_b} \in \lie (\mathcal{F}).
    \end{align*}
\end{lemma}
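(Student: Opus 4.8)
The plan is to run the same kind of double induction used in the preceding lemmas: an outer induction on the overall degree bound $D$ (the hypothesis \eqref{indhyp}), and, within a fixed $D$, an induction on the number $m$ of factors of the form $\tr A^{p_b} B^{q_b}$. The base case $m = 0$ is exactly Lemma \ref{lemma: trXYPRODtrX} with $q = p = 0$, i.e.\ a pure product $\prod_a \tr A^{i_a}$ of traces of powers of $A$, which is handled by \eqref{all-p_k=0} (bracketing $\tr A^{i_0} \tr AB$ against the product). The case $m = 1$ is Lemma \ref{lemma: trXYPRODtrX} verbatim. So the real content is the induction step from $m$ to $m+1$.

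For the step, I would take a product $f = \left(\prod_{a=1}^l \tr A^{i_a}\right)\left(\prod_{b=1}^{m+1} \tr A^{p_b}B^{q_b}\right)$ with total degree $\le D+4$ and try to produce it as a bracket, reducing the number of $AB$-type factors by one. The natural move is to take the factor $\tr A^{p_{m+1}}B^{q_{m+1}}$ and ``split it off'': replace it (together with borrowing one unit of $A$-degree from somewhere, as in the proof of Lemma \ref{lemma: trXYPRODtrX}) by a bracket $\{\tr A^{j}, \, \tr B^{q_{m+1}+1}\cdot(\text{rest})\}$ or, more flexibly, $\{\tr A^{i}\tr AB,\ \dots\}$-type brackets that, by Lemma \ref{lem:bracketformular} and the special cases \eqref{AjBk}, \eqref{ABweight}, produce the desired monomial as leading term. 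Concretely: by the induction hypothesis on $m$, the function $\tr B^{p_{m+1}+q_{m+1}+1}\left(\prod_a \tr A^{i_a}\right)\prod_{b=1}^m \tr A^{p_b}B^{q_b}$ lies in $\lie(\mathcal{F})$ — one more $B$-power absorbed, one fewer $AB$-factor — and bracketing it with $\tr A^{p_{m+1}+1}$ yields, via \eqref{AjBk}, the leading term $\tr A^{p_{m+1}}B^{p_{m+1}+q_{m+1}} \cdot (\ldots)$ wait — one then uses repeated $\{\tr A^2, \cdot\}$ as in \eqref{pullingright} to redistribute the $B$-powers into the correct bidegrees $\tr A^{p_b}B^{q_b}$ and $\tr A^{p_{m+1}}B^{q_{m+1}}$. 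The $\frac1n$-correction terms coming from Lemma \ref{lem:bracketformular} all have strictly fewer $A$- and $B$-degrees bundled per factor or split a trace, so they are handled by the induction on $m$ combined with Lemma \ref{lemma: trXYPRODtrX}; terms of degree $\le D$ are in $\lie(\mathcal{F})$ by \eqref{indhyp}.

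The main obstacle, as in Lemma \ref{lemma: trXYPRODtrX}, is bookkeeping: making sure that when I peel off one $\tr A^{p_{m+1}}B^{q_{m+1}}$ factor I have enough ``room'' in the degree budget $\le D+4$ to carry out the intermediate brackets (each bracket with $\tr A^2$ or $\tr A^3$ temporarily shifts degrees but stays within $D+4$), and that every auxiliary function I bracket against has \emph{strictly fewer} $AB$-type factors or strictly smaller degree so that the induction is genuinely decreasing. The splitting $2\tr A^{p}B^{q-1}\tr A^{p'}B^{q'}$-style correction terms from Lemma \ref{lem:bracketformular} increase $m$ by one, so I must arrange that they simultaneously drop the total degree below what the induction on $m$ at the current $D$ — or the induction on $D$ — already controls; this is the delicate point and is exactly the reason the lemma is stated with the explicit constraint $\sum_a i_a + \sum_b(p_b+q_b)\le D+4$.

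Finally, once this lemma is in hand, the remaining step toward Theorem \ref{theorem: HDP} — and hence \eqref{PrefaceThmHDP} for arbitrary products $\prod_{k=1}^m \tr A^{p_k}B^{q_k}$ with no separate $\tr A^{i_a}$ factors, completing the induction on $D$ — will follow by yet another short induction absorbing the pure-$A$ factors into $AB$-factors, using identities like \eqref{ABweight} and \eqref{trAjB} to trade a bare $\tr A^{i}$ for an $AB$-monomial inside a bracket. I would isolate that as the next lemma rather than fold it in here.
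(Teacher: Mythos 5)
Your outer structure (induction on $m$, base cases $m=0,1$ from Lemma \ref{lemma: trXYPRODtrX}) matches the paper, but the core of the induction step is missing, and the two claims you lean on to dispose of the problematic terms are not correct. First, the auxiliary function you bracket against, $\tr B^{p_{m+1}+q_{m+1}+1}\left(\prod_a \tr A^{i_a}\right)\prod_{b=1}^m \tr A^{p_b}B^{q_b}$, is \emph{not} covered by the induction hypothesis on $m$: a pure power $\tr B^{K}=\tr A^0B^{K}$ still counts as a factor of the form $\tr A^{p}B^{q}$ (the lemma only allows pure $A$-powers to be split off separately), so this function again has $m+1$ such factors, and moreover its degree is $\deg f+1$, which may exceed the budget $D+4$. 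Second, the $\tfrac1n$-correction terms of Lemma \ref{lem:bracketformular} and the Leibniz cross terms do \emph{not} have lower degree or fewer factors: they have exactly the same total degree as the leading term and still carry $m+1$ factors of type $\tr A^pB^q$ (indeed, in the paper's expansion one of them, line \eqref{line2}, is a nonzero multiple of the target $f$ itself). So ``induction on $m$ plus \eqref{indhyp}'' cannot absorb them, and your hope that they ``drop the total degree'' is the precise point where the argument fails. Relatedly, the proposed ``redistribution'' of $B$-powers by repeated $\{\tr A^2,\cdot\}$ does not work on a product: by the Leibniz rule the bracket hits every factor, so you cannot shift exponents inside one chosen factor, and in your setup the bidegrees do not even match (brackets with $\tr A^2$ preserve total degree, while your intermediate factor has total degree larger than the target's).

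What the paper actually does, and what your proposal lacks, is a mechanism for terms of the \emph{same} degree and the \emph{same} number of mixed factors: it brackets the single mixed trace $\tr A^{p_0+1}B^{q_0}$ (in $\lie(\mathcal F)$ by Lemma \ref{lemma: trXYPRODtrX}) against the product with $q_m$ bumped to $q_m+1$, which genuinely has only $m$ mixed factors and stays within the degree budget; the target then appears not as the leading term but as the correction term \eqref{line2} with nonzero coefficient, and one solves for it after showing all other lines lie in $\lie(\mathcal F)$. That last step requires two further nested inductions — on $q_0$ and, for line \eqref{line5}, on the multiindex $(q_1,\dots,q_{m-1})$ in lexicographic order — exploiting that in every remaining correction term some $B$-exponent strictly decreases. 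Without introducing this (or an equivalent) secondary induction on the $B$-exponents, the induction on $m$ alone does not close, so as written the proposal has a genuine gap.
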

\begin{proof}
    Use induction on $m$; Lemma \ref{lemma: trXYPRODtrX} covers the base cases $m=0$ and $m=1$.
    Suppose the lemma holds for some $m \ge 1$, and consider a product with $m+1$ factors of the form $\tr A^p B^q$
    \begin{align*}
        f= \left( \prod_{a=1}^l \tr A^{i_a} \right) \prod_{b=0}^m \tr A^{p_b}B^{q_b}
    \end{align*}
    where 
    \begin{align}\label{degcondi}
        \sum_{a=1}^l i_a  + \sum_{b=0}^m (p_b + q_b) \le D+4.
    \end{align}
    
    When $q_c = 0$ for some $c \in \{0, 1, \dots, m\}$, then it follows from the induction assumption on $m$ and \eqref{degcondi}
    \begin{align*}
        f = \left(\prod_{a=1}^l \tr A^{i_a}\right) \tr A^{p_c} \prod_{\substack{b\neq c}} \tr A^{p_b}B^{q_b} \in \lie (\mathcal{F})
    \end{align*}
    It remains to consider the case $q_b \ge 1$ for all $b=0, 1, \dots, m$. By Lemma \ref{lemma: trXYPRODtrX} and the induction assumption on $m$ 
    \[
        \tr A^{p_0+1}B^{q_0}, \tr A^{p_m} B^{q_m+1}  \left(\prod_{b=1}^{m-1} \tr A^{p_b}B^{q_b} \right) \prod_{a=1}^l \tr A^{i_a} \in \lie(\mathcal{F})
    \]
    Thus

    \begin{align}
        &\lie(\mathcal{F}) \ni \{ \tr A^{p_0+1} B^{q_0}, \tr A^{p_m} B^{q_m+1}  \left(\prod_{b=1}^{m-1} \tr A^{p_b}B^{q_b} \right) \prod_{a=1}^l \tr A^{i_a} \} \nonumber \\
        \thicksim& \, ((p_0+1) (q_m+1) - q_0 p_m) \tr A^{p_0+p_m}B^{q_0+q_m} \left( \prod_{a=1}^l \tr A^{i_a} \right)  \prod_{b=1}^{m-1} \tr A^{p_b}B^{q_b} \label{line1} \\ 
        &- \frac{(p_0+1) (q_m+1)}{n} \tr A^{p_0}B^{q_0} \tr A^{p_m} B^{q_m} \left( \prod_{a=1}^l \tr A^{i_a} \right)  \prod_{b=1}^{m-1} \tr A^{p_b}B^{q_b} \label{line2} \\
        &+ \frac{q_0 p_m}{n}  \tr A^{p_0+1} B^{q_0-1} \tr A^{p_m-1}B^{q_m+1} \left( \prod_{a=1}^l \tr A^{i_a} \right)  \prod_{b=1}^{m-1} \tr A^{p_b}B^{q_b} \label{line3} \\
        &+  \tr A^{p_m} B^{q_m+1} \left( \prod_{a=1}^l \tr A^{i_a} \right) \sum_{b=1}^{m-1} ((p_0+1) q_b - q_0 p_b) \tr A^{p_0+p_b}B^{q_0+q_b-1} \prod_{c \neq b} \tr A^{p_c}B^{q_c} \label{line4} \\
        &- \frac{p_0+1}{n} \tr A^{p_m} B^{q_m+1} \left( \prod_{a=1}^l \tr A^{i_a} \right) \tr A^{p_0}B^{q_0} \sum_{b=1}^{m-1} q_b   \tr A^{p_b}B^{q_b-1} \prod_{c \neq b} \tr A^{p_c}B^{q_c} \label{line5} \\
        &+ \frac{q_0}{n} \tr A^{p_m} B^{q_m+1} \left( \prod_{a=1}^l \tr A^{i_a} \right) \tr A^{p_0+1} B^{q_0-1}\sum_{b=1}^{m-1}  p_b  \tr A^{p_b-1}B^{q_b} \prod_{c \neq b} \tr A^{p_c}B^{q_c} \label{line6} \\
        &- q_0 \tr A^{p_m} B^{q_m+1} \left(\prod_{b=1}^{m-1} \tr A^{p_b}B^{q_b} \right) \sum_{a=1}^l i_a \tr A^{p_0+i_a}B^{q_0-1} \left(\prod_{d \neq a} \tr A^{i_d}\right)  \label{line7} \\
        &+ \frac{q_0}{n} \tr A^{p_m} B^{q_m+1} \left(\prod_{b=1}^{m-1} \tr A^{p_b}B^{q_b}\right) \sum_{a=1}^l i_a  \tr A^{p_0+1} B^{q_0-1} \tr A^{i_a-1} \left(\prod_{d \neq a} \tr A^{i_d}\right)  \label{line8} \\
        &\mod  \sum_a i_a + \sum_{b=0}^m (p_b + q_b) -4  \nonumber
    \end{align}
    The first summand \eqref{line1} and each summand on line \eqref{line4} have $m$ factors $\tr A^p B^q$ and are in $\lie(\mathcal{F})$ by the induction assumption on $m$.
 
    Next, for $q_0=1$ the terms on line \eqref{line3}, line \eqref{line6}, line \eqref{line7} and line \eqref{line8} all belong to $\lie(\mathcal{F})$ by the induction assumption on $m$. Thus in this case it remains to consider line \eqref{line2} and \eqref{line5}.
    For terms on line \eqref{line5}, we use another induction on the multiindex $(q_1, \dots, q_{m-1})$. For 
    \begin{align*}
        (q_1, \dots, q_{m-1} ) = (1,  \dots, 1)
    \end{align*}
    each summand on line \eqref{line5} has $m$ factors $\tr A^p B^q$ and is in $\lie(\mathcal{F})$. Hence on line \eqref{line2}
    \begin{align*}
        \tr A^{p_0}B \cdot \tr A^{p_m} B^{q_m} \left( \prod_{a=1}^l \tr A^{i_a} \right)  \prod_{b=1}^{m-1} \tr A^{p_b}B \in \lie(\mathcal{F})
    \end{align*}
    Suppose that 
    \begin{align} \label{lexiInduct}
        \tr A^{p_0}B \cdot \tr A^{p_m} B^{q_m} \left( \prod_{a=1}^l \tr A^{i_a} \right)  \prod_{b=1}^{m-1} \tr A^{p_b}B^{q_b} \in \lie(\mathcal{F})
    \end{align}
    for all $(q_1, \dots, q_{m-1} ) \prec (s_1,  \dots, s_{m-1})$ in lexicographic order. Then it is also in $\lie(\mathcal{F})$ with $(q_1, \dots, q_{m-1} )=(s_1,  \dots, s_{m-1})$, since the terms on line \eqref{line5} are also of this form and have $(q_1, \dots, q_{m-1} ) \prec (s_1,  \dots, s_{m-1})$ as exponents in the powers of $B$. An induction on $(q_1, \dots, q_{m-1} )$ in lexicographic order shows that \eqref{lexiInduct} is valid for any $(q_1, \dots, q_{m-1} )$ satisfying \eqref{degcondi}. 

\smallskip
    Continue with our induction on $q_0$. 
    Suppose that the claim holds when one of the powers of $B$, $q_c < q_0, c \in \{ 0, 1, \dots, m \}$. Then the terms on line \eqref{line3}, line \eqref{line6}, line \eqref{line7} and line \eqref{line8} belong to $\lie(\mathcal{F})$ by this assumption. Again, it remains to consider line \eqref{line2} and \eqref{line5}.
    As before we proceed by induction on the multiindex $(q_1, \dots, q_{m-1} )$ in lexicographic order to include line \eqref{line5} in $\lie(\mathcal{F})$ and to get line \eqref{line2} $f \in \lie(\mathcal{F})$. 
    This finishes the induction step.
\end{proof}

\begin{lemma} \label{lemma: nfactors}
    Assume that the induction hypothesis \eqref{indhyp} holds, and let 
    \[  m \in \mathbb{N}, p_1, \dots, p_m, q_1,\dots,q_m \in \mathbb{N}_0
    \]
    with $\sum_k p_k + q_k \le D+4$. Then 
    \begin{align*}
          \prod_{k=1}^m \tr A^{p_k}B^{q_k} \in \lie(\mathcal{F}).
    \end{align*}
\end{lemma}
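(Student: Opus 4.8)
The plan is to get rid of the hypothesis in Lemma~\ref{lemma: trXiatrXpbYqb} that a pure power of $A$ appear among the factors, by peeling off, one at a time, the factors $\tr A^{p_k}B^{q_k}$ with $p_k,q_k\ge 1$, routing each of them through the genuine pure powers $\tr A^{p_k+1}$ and $\tr B^{q_k+1}$, for which Lemmas~\ref{lemma: trAj&trBk}, \ref{lemma: trXYPRODtrX} and \ref{lemma: trXiatrXpbYqb} and their mirror images under the involution $(A,B)\mapsto(B,A)$ are available. These mirror images are legitimate: $\lie(\mathcal{F})$ is invariant under this involution, since it contains $\tr B^3$ by Lemma~\ref{lemma: trAj&trBk} and hence contains the image $\{\tr A^2,\tr B^2,\tr B^3,(\tr AB)^2\}$ of $\mathcal{F}$, while the Poisson bracket on $\cabove$ only changes sign under the involution; moreover the hypothesis \eqref{indhyp} is symmetric in $A$ and $B$.

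First I would dispose of the easy cases. A factor $\tr A^0B^0=n$ may be deleted (and if nothing survives, $f$ is a nonzero scalar, which is trivial, as in the case $D=0$); a factor $\tr A$ or $\tr B$ vanishes identically, making $f=0\in\lie(\mathcal{F})$. If a surviving factor is a pure power $\tr A^{p_k}$ with $p_k\ge 2$ (that is, $q_k=0$), then Lemma~\ref{lemma: trXiatrXpbYqb}, with that factor playing the role of $\tr A^{i_1}$, already gives $f\in\lie(\mathcal{F})$; symmetrically if a factor is $\tr B^{q_k}$ with $q_k\ge 2$. If $m=1$, then $\tr A^{p_1}B^{q_1}\in\lie(\mathcal{F})$ by the base case $m=0$ of Lemma~\ref{lemma: trXYPRODtrX}. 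So I may assume $m\ge 2$ and $p_k,q_k\ge 1$ for every $k$.

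The key step, for such an $f=\prod_{k=1}^m\tr A^{p_k}B^{q_k}$, is to consider the Poisson bracket
\[
    \Big\{\, \tr A^{p_m+1}\,,\ \tr B^{q_m+1}\prod_{k=1}^{m-1}\tr A^{p_k}B^{q_k}\,\Big\}.
\]
Its left entry lies in $\lie(\mathcal{F})$ by Lemma~\ref{lemma: trAj&trBk}, its right entry lies in $\lie(\mathcal{F})$ by the mirror of Lemma~\ref{lemma: trXiatrXpbYqb} (it carries the pure power $\tr B^{q_m+1}$ with $q_m+1\ge 2$), so the bracket itself lies in $\lie(\mathcal{F})$. Expanding it by the Leibniz rule, using \eqref{AjBk} for $\{\tr A^{p_m+1},\tr B^{q_m+1}\}$ and Lemma~\ref{lem:bracketformular} for each $\{\tr A^{p_m+1},\tr A^{p_k}B^{q_k}\}$, and simplifying with the identities $\tr A=\tr B=0$, I expect the result to take the form
\[
    (p_m+1)(q_m+1)\,f\;-\;\tfrac{(p_m+1)(q_m+1)}{n}\,\tr A^{p_m}\tr B^{q_m}\!\!\prod_{k<m}\tr A^{p_k}B^{q_k}\;+\;(\text{terms retaining the factor }\tr B^{q_m+1})\;+\;(\text{terms of lower degree}),
\]
where the terms of lower degree come from Lemma~\ref{lem:bracketformular} and have total degree at most $\big(\sum_k p_k+q_k\big)-4\le D$.

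It then remains to observe that every term other than $(p_m+1)(q_m+1)\,f$ already lies in $\lie(\mathcal{F})$: the middle displayed term vanishes when $p_m=1$ and otherwise carries the pure power $\tr A^{p_m}$ with $p_m\ge 2$, so is covered by Lemma~\ref{lemma: trXiatrXpbYqb}; the terms retaining $\tr B^{q_m+1}$ are covered by the mirror of Lemma~\ref{lemma: trXiatrXpbYqb}; and the terms of lower degree are covered by \eqref{indhyp}. Dividing by $(p_m+1)(q_m+1)\ne 0$ then gives $f\in\lie(\mathcal{F})$. The main obstacle is exactly the bookkeeping inside this Leibniz expansion: after cancelling the terms that contain a factor $\tr A$ or $\tr B$, one must check that \emph{every} surviving term either still carries one of the pure powers $\tr A^{p_m}$ or $\tr B^{q_m+1}$ --- so that Lemma~\ref{lemma: trXiatrXpbYqb} or its mirror applies --- or has dropped by at least $4$ in total degree --- so that \eqref{indhyp} applies; in particular one must rule out that a surviving term is a product of two or more factors $\tr A^{p}B^{q}$ with $p,q\ge 1$ and with no pure power among the factors, which would lie beyond the reach of Lemmas~\ref{lemma: trXYPRODtrX}--\ref{lemma: trXiatrXpbYqb}. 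A secondary point is making the $A\leftrightarrow B$ symmetry argument precise enough that the mirror of Lemma~\ref{lemma: trXiatrXpbYqb} is genuinely available under the same degree hypothesis \eqref{indhyp}.
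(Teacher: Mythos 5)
Your argument runs on the same mechanism as the paper's proof: single out one mixed factor, bracket a pure power of $A$ of exponent raised by one against a pure power of $B$ of exponent raised by one times the remaining factors, read off a nonzero multiple of $f$ from the leading term of Lemma \ref{lem:bracketformular}, and absorb everything else via Lemma \ref{lemma: trXiatrXpbYqb} and \eqref{indhyp}. The difference is how the two pure powers are distributed. The paper brackets $\{\tr A^{p_0+1}\prod_{k\ge 1}\tr A^{p_k}B^{q_k},\,\tr B^{q_0+1}\}$, so the left entry is covered by Lemma \ref{lemma: trXiatrXpbYqb} exactly as stated, and every residual term of the Leibniz expansion still carries a pure power $\tr A^{p_0}$ or $\tr A^{p_0+1}$; no mirrored lemma is needed (and, together with an induction on $m$, this also lets the paper avoid your preliminary reduction to $p_k,q_k\ge 1$). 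You instead bracket $\{\tr A^{p_m+1},\,\tr B^{q_m+1}\prod_{k<m}\tr A^{p_k}B^{q_k}\}$, which makes both the right-hand entry and the residual terms retaining $\tr B^{q_m+1}$ depend on an $A\leftrightarrow B$ mirror of Lemma \ref{lemma: trXiatrXpbYqb}. Your bookkeeping concern does resolve favourably: in the expansion every term other than $(p_m+1)(q_m+1)f$ keeps either $\tr A^{p_m}$ (which vanishes when $p_m=1$) or the factor $\tr B^{q_m+1}$, or has total degree at most $\sum_k(p_k+q_k)-4\le D$, so your case analysis is exhaustive and no ``orphan'' product of purely mixed factors appears.

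The one soft spot is precisely the point you flagged: the swap $(A,B)\mapsto(B,A)$ is \emph{not} a self-map of $\rabove$, since it turns the rank condition $\rank([A,B]-iI_n)=1$ into $\rank([A,B]+iI_n)=1$, so the statement that ``the Poisson bracket on $\cabove$ changes sign under the involution'' does not by itself transfer Lemma \ref{lemma: trXiatrXpbYqb} to its mirror. This is repairable in either of two ways: use the anti-symplectic involution $(A,B)\mapsto(B^{T},A^{T})$, which does preserve $\rabove$ because $[B^{T},A^{T}]=[A,B]^{T}$, pulls $\tr A^{j}B^{k}$ back to $\tr A^{k}B^{j}$, reverses the Poisson bracket, maps $\mathcal{F}$ into $\lie(\mathcal{F})$ (using $\tr B^{3}\in\lie(\mathcal{F})$ from Lemma \ref{lemma: trAj&trBk}) and leaves the class of functions in \eqref{indhyp} invariant, so the mirror of Lemma \ref{lemma: trXiatrXpbYqb} holds under the same hypothesis; or simply invoke the paper's remark that all computations of that section are valid for the rank condition with any nonzero constant, and repeat the proofs of Lemmas \ref{lemma: trXYPRODtrX} and \ref{lemma: trXiatrXpbYqb} verbatim with $A$ and $B$ interchanged. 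With that repair made explicit (or with the paper's rearrangement, which sidesteps the mirror altogether), your proof is complete.
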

\begin{proof}
    By induction on $m$; the case $m=1$ is a special case of Lemma \ref{lemma: trXYPRODtrX}. Suppose the lemma holds for some $m \ge 1$, and consider an $(m+1)$-fold product
    \begin{align*}
        f = \prod_{k=0}^m \tr A^{p_k} B^{q_k}, \text{ where } \sum_{k=0}^m (p_k + q_k) \le D + 4
    \end{align*}
    We may assume $p_0 > 0$ (for if none of $p_k$, $k \in \{0, \dots, m\}$, is positive, it reduces to the symmetric version of \eqref{all-p_k=0}). 
    By Lemmas \ref{lemma: trXYPRODtrX} and \ref{lemma: trXiatrXpbYqb}, 
    \[
        \tr B^{q_0 +1}, \tr A^{p_0 +1} \prod_{k=1}^m \tr A^{p_k} B^{q_k} \in \lie(\mathcal{F}).
    \]
    Thus
    \begin{align*}
        &\lie(\mathcal{F}) \ni \{ \tr A^{p_0+1} \prod_{k=1}^{m} \tr A^{p_k}B^{q_k}, \tr B^{q_0+1} \}  \\
        &\thicksim \, (p_0+1)(q_0+1) \tr A^{p_0} B^{q_0} \prod_{k=1}^{m} \tr A^{p_k}B^{q_k} \\
        &\quad  - \frac{(p_0+1)(q_0+1)}{n} \tr A^{p_0} \tr B^{q_0} \prod_{k=1}^{m} \tr A^{p_k}B^{q_k}  \\
        &\quad + (q_0+1) \tr A^{p_0+1} \sum_{k=1}^{m} p_k (\tr A^{p_k-1}B^{q_0+q_k} - \frac{1}{n} \tr A^{p_k-1}B^{q_k} \tr B^{q_0} ) \prod_{a \neq k} \tr A^{p_a}B^{q_a} \\
        &\mod p_0+q_0 + \sum_k (p_k+q_k) -4
    \end{align*}
    As $p_0 > 0$ the summands on the third and the fourth line have degree $\le D+4$ and are in $\lie(\mathcal{F})$ by Lemma \ref{lemma: trXiatrXpbYqb}. Thus $f \in \lie(\mathcal{F})$. 
\end{proof}

\section*{Funding}
Rafael Andrist was supported by the European Union (ERC Advanced grant HPDR, 101053085 to Franc Forstneri\v{c}). Gaofeng Huang was partially supported by Schweizerischer Nationalfonds (SNSF) grant 200021-207335.

\section*{Conflict of Interest}
The authors have no relevant competing interest to disclose.

\begin{bibdiv}
\begin{biblist}

\bib{CaloSymplo}{article}{
   author={Andrist, Rafael B.},
   author={Huang, Gaofeng},
   title={The symplectic density property for Calogero-Moser spaces},
   journal={J. Lond. Math. Soc. (2)},
   volume={111},
   date={2025},
   number={2},
   pages={Paper No. e70100, 29},
   issn={0024-6107},
   review={\MR{4868759}},
   doi={10.1112/jlms.70100},
}

\bib{MR1853077}{book}{
   author={Cannas da Silva, Ana},
   title={Lectures on symplectic geometry},
   series={Lecture Notes in Mathematics},
   volume={1764},
   publisher={Springer-Verlag, Berlin},
   date={2001},
   pages={xii+217},
   isbn={3-540-42195-5},
   doi={10.1007/978-3-540-45330-7},
}

\bib{MR4423269}{article}{
   author={Deng, Fusheng},
   author={Wold, Erlend Forn\ae ss},
   title={Hamiltonian Carleman approximation and the density property for
   coadjoint orbits},
   journal={Ark. Mat.},
   volume={60},
   date={2022},
   number={1},
   pages={23--41},
   issn={0004-2080},
   review={\MR{4423269}},
   doi={10.4310/arkiv.2022.v60.n1.a2},
}

\bib{CM4}{article}{
   author={Eshmatov, Farkhod},
   author={Garc\'ia-Mart\'inez, Xabier},
   author={Normatov, Zafar},
   author={Turdibaev, Rustam},
   title={On the coordinate rings of Calogero-Moser spaces and the invariant
   commuting variety of a pair of matrices},
   journal={Results Math.},
   volume={80},
   date={2025},
   number={3},
   pages={Paper No. 68, 23},
   issn={1422-6383},
   review={\MR{4877483}},
   doi={10.1007/s00025-025-02385-7},
}

\bib{MR1881922}{article}{
   author={Etingof, Pavel},
   author={Ginzburg, Victor},
   title={Symplectic reflection algebras, Calogero-Moser space, and deformed
   Harish-Chandra homomorphism},
   journal={Invent. Math.},
   volume={147},
   date={2002},
   number={2},
   pages={243--348},
   issn={0020-9910},
   doi={10.1007/s002220100171},
}

\bib{MR1408866}{article}{
   author={Forstneri{\v c}, Franc},
   title={Actions of $(\mathbf{R},+)$ and $(\mathbf{C},+)$ on complex manifolds},
   journal={Math. Z.},
   volume={223},
   date={1996},
   number={1},
   pages={123--153},
   issn={0025-5874},
   review={\MR{1408866}},
   doi={10.1007/PL00004552},
}

\bib{CaloCarleman}{article}{
   author={Huang, Gaofeng},
   title={Holomorphic Approximation of Symplectic Diffeomorphisms for Calogero-Moser Spaces},
   date={2025},
   eprint={arXiv:2404.08505},
}

\bib{Treves}{book}{
   author={Tr{\`e}ves, Fran{\c{c}}ois},
   title={Topological vector spaces, distributions and kernels},
   note={Unabridged republication of the 1967 original},
   publisher={Dover Publications, Inc., Mineola, NY},
   date={2006},
   pages={xvi+565},
   isbn={0-486-45352-9},
   review={\MR{2296978}},
}

\bib{MR1829353}{article}{
   author={Varolin, Dror},
   title={The density property for complex manifolds and geometric
   structures},
   journal={J. Geom. Anal.},
   volume={11},
   date={2001},
   number={1},
   pages={135--160},
   issn={1050-6926},
   doi={10.1007/BF02921959},
}

\bib{MR1626461}{article}{
   author={Wilson, George},
   title={Collisions of Calogero-Moser particles and an adelic Grassmannian},
   note={With an appendix by I. G. Macdonald},
   journal={Invent. Math.},
   volume={133},
   date={1998},
   number={1},
   pages={1--41},
   issn={0020-9910},
   doi={10.1007/s002220050237},
}

\end{biblist}

\end{bibdiv}

\end{document}